\tikzset{
    >=stealth,
    every picture/.style={thick},
    graphs/every graph/.style={empty nodes},
}
\tikzstyle{vertex}=[
\tikzstyle{printersafe}=[decoration={snake,amplitude=0pt}]
\newcommand{\codim}{\operatorname{codim}}
\newcommand{\mult}{\operatorname{mult}}
\newcommand{\Spec}{\operatorname{Spec}}
\newcommand{\pp}{\mathbb{P}}
\renewcommand{\qq}{\mathbb{Q}}
\newcommand{\nn}{\mathbb{N}}
\newcommand{\cc}{\mathbb{C}}
\def\O#1.{\mathcal {O}_{#1}}			
\def\pr #1.{\mathbb P^{#1}}				
\def\af #1.{\mathbb A^{#1}}			
\def\ses#1.#2.#3.{0\to #1\to #2\to #3 \to 0}	
\def\xrar#1.{\xrightarrow{#1}}			
\def\K#1.{K_{#1}}						
\def\bA#1.{\mathbf{A}_{#1}}			
\def\bM#1.{\mathbf{M}_{#1}}				
\def\bL#1.{\mathbf{L}_{#1}}				
\def\bB#1.{\mathbf{B}_{#1}}				
\def\bK#1.{\mathbf{K}_{#1}}			
\def\subs#1.{_{#1}}					
\def\sups#1.{^{#1}}
\DeclareMathOperator{\Supp}{Supp}
\DeclareMathOperator{\lct}{lct}
\DeclareMathOperator{\Diff}{Diff}
\newcommand{\rar}{\rightarrow}	
\newcommand{\drar}{\dashrightarrow}	
\newcommand{\rddown}[1]{\left\lfloor{#1}\right\rfloor}
  \newtheorem{introthm}{Theorem}
  \newtheorem{introcor}{Corollary}
  \newtheorem{intrormk}{Remark}
  \newtheorem{theorem}{Theorem}[section]
  \newtheorem{lemma}[theorem]{Lemma}
  \newtheorem{proposition}[theorem]{Proposition}
  \newtheorem{corollary}[theorem]{Corollary}
  \newtheorem{conjecture}[theorem]{Conjecture}
\theoremstyle{definition}
  \newtheorem{notation}[theorem]{Notation}
  \newtheorem{definition}[theorem]{Definition}
  \newtheorem{example}[theorem]{Example}
\newtheorem{remark}[theorem]{Remark}
\theoremstyle{remark}
\numberwithin{equation}{section}
\begin{document}

\title[Log canonical $3$-fold complements]{Log canonical $3$-fold complements}

\author[S.~Filipazzi]{Stefano Filipazzi}
\address{
EPFL, SB MATH CAG, MA C3 625 (B\^{a}timent MA), Station 8, CH-1015 Lausanne, Switzerland
}
\email{stefano.filipazzi@epfl.ch}

\author[J.~Moraga]{Joaqu\'in Moraga}
\address{Department of Mathematics, Princeton University, Fine Hall, Washington Road, Princeton, NJ 08544-1000, USA
}
\email{jmoraga@princeton.edu}

\author[Y.~Xu]{Yanning Xu}
\address{
DPMMS, center for Mathematical Sciences, University of Cambridge, Wilberforce Road, Cambridge, UK}
\email{yx264@cam.ac.uk}

\makeatletter
\@namedef{subjclassname@2020}{
  \textup{2020} Mathematics Subject Classification}
\makeatother

\subjclass[2020]{Primary 14E30, 
Secondary 14F18.}
\maketitle

\begin{abstract}
We expand the theory of log canonical $3$-fold complements.
We prove that if $X\rightarrow T$ is a $3$-dimensional contraction of log Calabi-Yau type, then we can find $B\geq 0$ on $X$ for which $(X,B)$ is log canonical and
$n(K_X+B)\sim_T 0$, where $n$ is an uniform natural number. 
This means that every $3$-fold of log Calabi-Yau type can be turned into a log Calabi-Yau pair in an effective way.
\end{abstract}

\setcounter{tocdepth}{1}
\tableofcontents

\section{Introduction}

The idea of complements originates in Shokurov's work in the late 1970s~\cite{Sho79}.
In this paper, Shokurov proves that if $X$ is a Fano $3$-fold, then $|-K_X|$ has a smooth element $D$. 
This smooth element can be considered as a ``complement'' of the canonical divisor, as $K_X+D\sim 0$ holds.
The theory of complements was used to control the index of semistable degenerations of varieties~\cites{Sho97,Pro99}.
Given a contraction $X\rightarrow Z$ of Fano-type over $z\in Z$, 
the theory of complements predicts the existence of a positive integer $n$, 
such that $|-nK_X/Z|$ contains an element with good singularities around $z\in Z$.
This statement is usually known as boundedness of complements for Fano-type morphisms 
and it is expected that there exists such integer $n$ that only depends on the dimension of $X$. 
It is expected that we can weaken the Fano-type condition of the morphism $X\rightarrow Z$, 
to the existence of a log canonical $\qq$-complement.
This means that if for some positive integer $m$, we can find an element with good singularities in $|-mK_X/Z|$,
then we can also find an element with good singularities in $|-nK_X/Z|$,
where $n$ only depends on $\dim X$.
This statement is usually known as boundedness of complements for $\qq$-complemented pairs.
If we can find a log canonical complement, then we can do it effectively.

Complements were rigorously defined first in ~\cites{Sho96}. Prokhorov and Shokurov then developed the theory of log canonical surface complements in more detail~\cites{Sho97,Pro99}.
Some partial results are known in dimension three~\cites{Pro00,Pro01a,Pro01b,Fuj01}.
They also introduced some inductive schemes towards the existence of bounded complements for Fano-type varieties~\cites{PS01,PS09}.
The boundedness of complements for Fano-type morphisms was proved by Birkar~\cite{Bir16a}, 
and it was used to prove the BAB conjecture regarding the boundedness of Fano varieties~\cite{Bir16b}.
The existence of complements without the Fano-type assumption is expected to be considerably harder:
indeed, this condition guarantees that $X$ has klt singularities and $-K_X$ has positivity properties.
The third author proved some initial results towards the boundedness of complements for $\qq$-complemented pairs in dimension three~\cites{Xu19a,Xu19b,thesis}.
In particular, in ~\cite{thesis}, he proved the existence and boundedness of complements for projective log canonical Fano $3$-folds.
In this article, we expand the theory of log canonical $3$-fold complements.
The following theorem settles the existence of bounded complements for $\qq$-complemented $3$-folds with rational coefficients lying in a DCC set with rational accumulation points.

\begin{introthm}\label{main-theorem}
Let $\Lambda \subset \qq$ be a set satisfying the descending chain condition with rational accumulation points.
There exists a natural number $n$ only depending on $\Lambda$ that satisfies the following.
Let $X\rightarrow T$ be a contraction between normal quasi-projective varieties such that
\begin{itemize}
    \item $(X,B)$ is a log canonical $3$-fold;
    \item $(X,B)$ is $\qq$-complemented over $t\in T$; and 
    \item the coefficients of $B$ belong to $\Lambda$.
\end{itemize}
Then, up to shrinking $T$ around $t$, we can find
\[
0 \leq \Gamma \sim_{T} -n(K_X+B)
\]
such that $(X,B+\Gamma/n)$ is a log canonical pair.
\end{introthm}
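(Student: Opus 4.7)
The strategy is to reduce the statement to two situations: one in which Birkar's theorem on boundedness of complements for Fano-type morphisms applies, and one in which a canonical bundle formula allows us to descend the problem to a lower dimensional setting as a generalized pair. The overarching philosophy follows the Prokhorov--Shokurov inductive scheme, specialized to dimension $3$ and extended to cover the $\qq$-complemented (non-Fano-type) case with DCC coefficients with rational accumulation points.

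First, after shrinking $T$ around $t$, I would run a $(K_X+B)$-MMP over $T$. The existence of a $\qq$-complement means $K_X+B+B' \sim_{\qq,T} 0$ for some $B' \geq 0$ with $(X,B+B')$ log canonical over $t$, so $-(K_X+B)\sim_{\qq,T} B'$ is effective. Running the MMP on $K_X+B$ is then equivalent to running it on $-B'$, and it terminates with either (i) a Mori fibre space $Y \to Z$ over $T$, or (ii) a model $Y$ on which $K_Y+B_Y \sim_{\qq,T'} 0$ after passing to an intermediate base $Z \to T$. Since log canonical $n$-complements pull back under birational steps of the MMP and since the coefficients of the strict transform $B_Y$ remain in $\Lambda$, it suffices to construct $\Gamma$ on the output of the MMP.

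Second, I would reduce to the case where $\coeff(B)$ lies in a finite rational subset $\Lambda_0 \subset \Lambda$. Using the DCC property together with $\overline{\Lambda} \subset \qq$, one can approximate each coefficient of $B$ from below by an element of a finite rational subset, producing $B_0 \leq B$ whose coefficients have controlled denominators and such that any sufficiently divisible $n$-complement for $(X,B_0)$ is automatically an $n$-complement for $(X,B)$. This step is standard once one has ACC/DCC-type statements to propagate $\Lambda$-coefficients through the MMP and the canonical bundle formula that follow. In the Mori fibre space case, the generic fibres are Fano-type log canonical pairs of dimension at most $3$ with coefficients in a fixed finite set, and Birkar's boundedness of complements produces a uniform $n$. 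Gluing to a complement on $Y$ over $T$ proceeds by standard section lifting and spreading out arguments.

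The main obstacle is the Calabi--Yau fibration case $g \colon Y \to Z$ with $K_Y+B_Y\sim_{\qq, Z} 0$. Here I would apply the canonical bundle formula to write $K_Y+B_Y \sim_{\qq} g^*(K_Z + B_Z + M_Z)$, obtaining a generalized log canonical pair $(Z, B_Z+M_Z)$ with $\dim Z \leq 2$, and then try to bound complements for this pair and lift back. The hard part is twofold: first, controlling the coefficient and moduli data on $Z$ so that $(Z, B_Z+M_Z)$ fits into a DCC family of generalized pairs with rational accumulation behaviour, which requires effective control of denominators in the moduli part in terms of $\Lambda$ (this is where effective adjunction / ACC for generalized log canonical thresholds plays a central role); second, establishing boundedness of complements for generalized log canonical surface pairs with such DCC coefficient and b-nef moduli data, which is a genuine strengthening of the classical surface complement theory and, in this paper's setting, forms the technical core built on the third author's earlier work on $3$-fold complements. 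Once this generalized surface complement statement is available, lifting the complement from $Z$ to $Y$ through the canonical bundle formula, together with the MMP and coefficient reductions above, completes the proof.
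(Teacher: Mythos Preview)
Your plan diverges from the paper at the first geometric step and leaves a genuine gap in the Mori fibre space case. The paper does not run a $(K_X+B)$-MMP; after a dlt modification of the $\qq$-complemented pair it runs a $-(K_Y+B_Y+E)$-MMP over $T$, producing a model on which $-(K_Z+B_Z+E_Z)$ is semi-ample over $T$, with induced contraction $\phi\colon Z\to Z_0$ satisfying $K_Z+B_Z+E_Z\sim_{\qq,Z_0}0$. The case split is then by $\dim Z_0$: for $\dim Z_0\le 2$ one descends by the canonical bundle formula, while for $\dim Z_0=3$ one lands in the relative log canonical Fano case (Proposition~\ref{prop:log-canonical-Fano}). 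Your $(K_X+B)$-MMP to a Mori fibre space $Y\to Z$ does not place you in Birkar's setting over $T$: his theorem requires $Y$ to be of Fano type over the base where the complement is sought, and although $\rho(Y/Z)=1$ forces $Y$ to be Fano type over $Z$, it need not be Fano type over $T$ when $\dim Z>\dim T$ (already $Y=\pr 1.\times E$ with $E$ elliptic, $B_Y$ a single section, and $T$ a point gives a counterexample, since $-K_Y$ is not big). The ``standard section lifting'' you invoke to pass from complements over points of $Z$ to one over $t\in T$ is precisely the nonstandard content of the paper, and your case~(ii), an intermediate $Z$ with $K_Y+B_Y\sim_{\qq,Z}0$, does not arise from a $(K_X+B)$-MMP in the first place.

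You have also inverted where the difficulty lies. The canonical bundle formula reductions are the lighter half of the argument; the relative log canonical Fano case is the technical core, occupying \S\S\ref{effective-Kollar-gluing}--\ref{log-canonical-Fano} and requiring an effective version of Koll\'ar's gluing for semi-dlt surfaces together with a delicate lifting of complements from the non-klt locus. A smaller but real issue: the coefficient reduction must approximate from above, not below. Lemma~\ref{coefficients-perturbation} constructs $B_m\ge B$ with coefficients in a finite set, so that an $n$-complement for $(X,B_m)$ is automatically one for $(X,B)$; with your $B_0\le B$ there is no such monotonicity.
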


\begin{intrormk} \label{intrormk}
{\em
    One relevant instance of Theorem \ref{main-theorem} is when $\Lambda$ is a set of hyper-standard coefficients.
    More precisely, we can consider $\Lambda = \Phi(\mathcal R)$, where $\mathcal R \subset [0,1]$ is a finite set of rational numbers.
    Indeed, the key step to prove Theorem \ref{main-theorem} is to prove the case when $\Lambda = \Phi(\mathcal R)$.
    This is the content of Theorem \ref{main-theorem-hyper}.
    Once Theorem \ref{main-theorem-hyper} is proved, Theorem \ref{main-theorem} follows by approximation techniques introduced in \cite{FM18}.
    }
\end{intrormk}

In \S\ref{sec:examples}, we give examples that show that no condition of the theorem can be weakened.
As a consequence of the main theorem, we prove that strictly log canonical $3$-folds
with hyper-standard coefficients have bounded index.

\begin{introcor}\label{introcor}
Let $\Lambda \subset \qq$ be a set satisfying the descending chain condition with rational accumulation points.
There exists a natural number $n$ only depending on $\Lambda$ that satisfies the following.
Let $X\rightarrow T$ be a contraction between normal quasi-projective varieties such that
\begin{itemize}
    \item $(X,B)$ is a log canonical $3$-fold;
    \item $(X,B)$ has a log canonical center in the fiber over $t\in T$;
    \item $K_X+B\sim_{\qq,T} 0$; and 
    \item the coefficients of $B$ belong to $\Lambda$.
\end{itemize}
Then $n(K_X+B)\sim_{T} 0$.
\end{introcor}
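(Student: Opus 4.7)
The plan is to derive the corollary from Theorem~\ref{main-theorem}. Applying the theorem to $(X,B) \to T$ over $t \in T$ yields a natural number $n$ depending only on $\Lambda$ and an effective divisor $\Gamma$ with $\Gamma \sim_T -n(K_X+B)$ such that $(X, B+\Gamma/n)$ is log canonical. The desired conclusion $n(K_X+B) \sim_T 0$ is equivalent to $\Gamma \sim_T 0$ after shrinking $T$ around $t$, so the remaining task is to control $\Gamma$.

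From $K_X+B \sim_{\mathbb{Q},T} 0$, I deduce $\Gamma \sim_{\mathbb{Q},T} 0$. Restricted to the generic fiber $X_\eta$ of $f\colon X \to T$, the divisor $\Gamma|_{X_\eta}$ is effective and $\mathbb{Q}$-linearly trivial on the projective variety $X_\eta$; since $H^0(\mathcal{O}_{X_\eta})$ is a field, any such divisor must be zero. Hence $\Gamma$ has no horizontal component over $T$, and after shrinking $T$ around $t$, every component of $\Gamma$ has image through $t$.

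To force $\Gamma \sim_T 0$, I use the strictly log canonical hypothesis via a dlt modification. Let $g\colon Y \to X$ be a $\mathbb{Q}$-factorial dlt modification of $(X, B)$; by strict log canonicity, some component $S \subset \lfloor B_Y \rfloor$ has image through $t$. Since $(Y, B_Y + g^*\Gamma/n)$ is log canonical (by $g$-crepancy), the coefficient of $S$ in $g^*\Gamma$ must be zero. Divisorial adjunction along $S$, combined with $K_Y + B_Y \sim_{\mathbb{Q},T} 0$, then exhibits $(g^*\Gamma/n)|_S$ as an effective $\mathbb{Q}$-divisor on the proper variety $S \cap (f \circ g)^{-1}(t)$ satisfying $(g^*\Gamma/n)|_S \sim_\mathbb{Q} 0$, forcing $(g^*\Gamma)|_S = 0$. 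Pushing this vanishing back to $X$ and combining with the verticality of $\Gamma$ should yield $\Gamma \sim_T 0$ after further shrinking.

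The main obstacle is consolidating the local vanishing $(g^*\Gamma)|_S = 0$ into the global statement $\Gamma \sim_T 0$. This calls for a careful understanding of how the various strict lc-place divisors on $Y$ together cover $(f \circ g)^{-1}(t)$, likely leveraging a connectedness property of the non-klt locus along the fiber as well as the structure of $\Pic(X/T)$, in order to rule out any residual vertical contribution to $\Gamma$ supported on fibers over $t$.
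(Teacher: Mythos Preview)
Your approach is on the right track but misses a key simplification, and the final step you flag as an ``obstacle'' is precisely where the paper's argument is much cleaner.

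The observation you are missing is the following \emph{fiber dichotomy}: since $\Gamma \geq 0$ is $\qq$-Cartier and $\Gamma \sim_{\qq,T} 0$, the support of $\Gamma$ is not merely vertical but in fact is a union of whole fibers. Indeed, if some fiber $F_{t}$ met $\Supp(\Gamma)$ without being contained in it, then by connectedness of $F_t$ one could find an irreducible curve $C \subset F_t$ with $C \not\subset \Supp(\Gamma)$ but $C \cap \Supp(\Gamma) \neq \emptyset$, forcing $\Gamma \cdot C > 0$ and contradicting $\Gamma \equiv_T 0$. So either $F_t \subset \Supp(\Gamma)$ or $F_t \cap \Supp(\Gamma) = \emptyset$.

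With this in hand, the dlt modification, the adjunction to $S$, and the appeal to connectedness of the non-klt locus are all unnecessary. The paper argues directly on $X$: if $F_t \subset \Supp(\Gamma)$, then since $(X,B)$ is strictly log canonical over $t$ there is a log canonical center $V$ of $(X,B)$ contained in $F_t$; as $\Gamma/n$ is strictly positive along $V$, any log canonical place $E$ of $(X,B)$ with center $V$ satisfies $a_E(X,B+\Gamma/n) < a_E(X,B) = 0$, contradicting the log canonicity of $(X,B+\Gamma/n)$. Hence $F_t \cap \Supp(\Gamma) = \emptyset$, and after shrinking $T$ around $t$ one has $\Gamma = 0$, i.e., $n(K_X+B) \sim_T 0$.

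Your proposed route via $(g^*\Gamma)|_S = 0$ can be salvaged, but only by invoking the same fiber dichotomy: knowing $g^*\Gamma$ vanishes along $S$ (which meets the fiber over $t$) together with $\Supp(g^*\Gamma)$ being a union of fibers immediately gives $g^*\Gamma \cap (f\circ g)^{-1}(t) = \emptyset$. Without that dichotomy, the vanishing on a single component $S$ does not propagate, and the connectedness of the non-klt locus is the wrong tool here since you need to control $\Gamma$ away from the non-klt locus as well.
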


As another consequence of the above corollary, we prove the following result about the local index of strict log canonical $4$-fold singularities.

\begin{introcor}\label{introcor1}
	Let $\Lambda = \Phi(\lbrace 0,1 \rbrace) $ be the set of standard multiplicities. 
	There exists a natural number $n$ only depending on $\Lambda$ that satisfies the following.
	Let $(x\in X,B)$ be a germ of a projective pair such that
	\begin{itemize}
		\item $(x\in X,B)$ is a log canonical $4$-fold;
		\item the point $x$ is a log canonical center of $(x\in X,B)$; and  
		\item the coefficients of $B$ belong to $\Lambda$.
	\end{itemize}
	Then after possibly shrinking $X$ around $x$, we have $n(K_X+B)\sim 0$.
\end{introcor}

\subsection*{Acknowledgements} 
SF and JM were partially supported by NSF research grants no: DMS-1801851, DMS-1265285 and by a
grant from the
Simons Foundation; Award Number: 256202.
YX would like to thank Prof. Birkar for his insightful discussions about the topic. 
After we completed this work, Jingjun Han informed us that he and Chen Jiang obtained some partial results for complemented 3-folds.

\section{Sketch of the proof}\label{sketch}

In this section, we will give a brief sketch of the proof of Theorem~\ref{main-theorem}.
The proof consists of three steps.
In the two first steps, we will prove Theorem~\ref{main-theorem} in the case that
$\Lambda=\Phi(\mathcal{R})$. 
The last step is an approximation argument
that leads to the general case.
\begin{enumerate}
    \item In this step, we reduce to the case in which $(X,B)$ is dlt and $-(K_X+B)$ is semiample over the base. To achieve this, we use dlt modifications and the minimal model program. 
    \item In this step, we lift a complement for $(X,B)$.
    In some cases we lift a complement from a log canonical center of $(X,B)$.
    In the other cases, we consider the morphism defined by $-(K_X+B)$ and lift a complement from the target of this morphism.
    \item Finally, we approximate the set $\Lambda$ with a set of hyper-standard coefficients to deduce the general statement.
\end{enumerate}
We give more details of these steps below. 

\subsection{Dlt modification and MMP} Let $B+B'$ be a $\qq$-complement of the log canonical $3$-fold $(X,B)$ around the point $t\in T$,
i.e., up to shrinking $T$ around $t\in T$, we have that $0 \leq B' \sim_{T,\qq} -(K_X+B)$ and $(X,B+B')$ is log canonical. 
We denote by $\pi \colon  Y\rightarrow X$ a $\qq$-factorial dlt modification of $(X,B+B')$ over $T$.
During the sketch, we will adopt the following notation
\[
\pi^*(K_X+B+B')=K_Y+B_Y+B'_Y+E, 
\]
where $E$ is the reduced divisor that contains all the log canonical places of $(X,B+B')$.
Note that $E$ may be reducible.
We also have
\[
B_Y \coloneqq \pi^{-1}_{*}B -\pi^{-1}_{*}B \wedge E,
\]
and
\[
B'_Y \coloneqq \pi^{-1}_{*}B' -\pi^{-1}_{*}B'\wedge E.
\]
Observe that $B_Y$ (resp. $B'_Y$) is the strict transform of $B$ (resp. $B'$)
minus all the divisors contained in the support of $E$.
In order to produce an $n$-complement for $(X,B)$ over $t\in T$, 
it suffices to produce an $n$-complement for $(Y,B_Y+E)$ (which is $\qq$-complemented over $t\in T$) 
over $t\in T$, and then push it forward to $X$.
Notice that all the log canonical places of $(Y,B_Y+B'_Y+E)$ are contained in the support of $E$.
Therefore, for $\epsilon>0$ small enough, the pair 
\[
(Y,B_Y+(1+\epsilon)B'_Y+E)
\]
is a $\qq$-factorial dlt pair that is pseudo-effective over $T$.
We run a minimal model program for $K_Y+B_Y+(1+\epsilon)B'_Y+E$ 
over $T$, which terminates with a good minimal model
$(Z,B_Z+(1+\epsilon)B'_Z+E_Z)$ over $T$. 
Here, $B_Z$ (resp. $B'_Z$ and $E_Z)$ denotes the strict transform of $B_Y$ (resp. $B'_Y$ and $E$).
Observe that this minimal model program is also a minimal model program for 
\[
-\epsilon (K_Y+B_Y+E) \sim_{\qq,T} \epsilon B'_Y \sim_{\qq,T} K_Y+B_Y+(1+\epsilon)B'_Y+E.
\]
In particular, any $n$-complement over $t\in T$ for $(Z,B_Z+E_Z)$ pulls back to an
$n$-complement over $t\in T$ for $(Y,B_Y+E_Y)$.
Therefore, it suffices to produce an $n$-complement for the log canonical pair
$(Z,B_Z+E_Z)$, which is $\qq$-complemented over $t\in T$.
Since $Z$ is a relative good minimal model, we have that $-(K_Z+B_Z+E_Z)$ is semi-ample over $T$.
Hence, it induces a morphism $\phi \colon Z\rightarrow Z_0$ over $T$.
We obtain a diagram as follows
\[
 \xymatrix@C=50pt{
 Y\ar[d]_-{\pi} \ar[rdd]\ar@{-->}[rr] & & Z\ar[ldd]\ar[d]^-{\phi} \\
 X\ar[rd] & & Z_0\ar[ld]  \\
 & T. & 
 }
\]

\subsection{Lifting complements} 
Now, we distinguish two main cases: $\phi$ is a fibration and hence $\dim Z_0 < \dim Z$, and $\phi$ is a birational contraction.

First, assume that $\phi$ is a fibration.
In this case, we apply an effective canonical bundle formula to write
\[
I(K_Z+B_Z+E_Z)\sim_T I\phi^*(K_{Z_0}+B_{Z_0}),
\]
where $I$ only depends on $\mathcal{R}$ and $(Z,B_Z)$, 
and the coefficients of $B_{Z_0}$ belong to some hyper-standard set only depending on $\mathcal{R}$.
Observe that it suffices to find an $n$-complement $\Gamma$ for $(Z_0,B_{Z_0})$ over $t\in T$.
Indeed, we will have
\[
\Gamma \sim_T -n(K_{Z_0}+B_{Z_0}), 
\]
with $(Z_0,B_{Z_0}+\Gamma/n)$ log canonical, and hence 
\[
I\Gamma_Z \sim_T -nI(K_Z+B_Z+E_Z)
\]
holds, where $\Gamma_Z$ denotes the pull-back of $\Gamma$ to $Z$.
Moreover, by construction, we will have that $(Z,B_Z+E_Z+\Gamma_Z/n)$ is log canonical. 
Hence, we reduced the problem to produce a complement for an anti-ample log canonical divisor, 
i.e., a pair that is log canonical and log Fano over the point $t\in T$.
In \S\ref{log-canonical-surface-complements}, we prove that the statement of Theorem~\ref{main-theorem} holds for surfaces.
Notice that most of the work in this direction already appears in \cites{Sho97,Pro99}.
In \S\ref{lifting-complements-surfaces}, we prove Theorem \ref{main-theorem} in the case that $Z_0$ is a surface,
by lifting the complement constructed in \S\ref{log-canonical-surface-complements} as explained above.
In \S\ref{lifting-complements-curves}, we prove the statement of Theorem \ref{main-theorem} in the case that $Z_0$ has dimension at most one, 
by either reducing the statement to the case of \S\ref{lifting-complements-surfaces},
applying the boundedness of the index for log Calabi--Yau projective $3$-folds,
or lifting the complement from a curve.

Now, assume that $\phi$ is a birational contraction.
In this case, $(Z_0,B_{Z_0}+E_{Z_0})$ is log Fano over $T$ and $(Z,B_Z+E_Z)$ is weak log Fano over $T$.
In this setup, we follow the ideas of Birkar, who settled the klt case in \cite{Bir16a}:
that is, we try to construct a complement on the pair induced by adjunction $(E_Z,B_{E_Z})$ and then lift it to a complement of $(Z,B_Z+E_Z)$.
In doing so, we have to face three main technical difficulties, which can be avoided in the klt case.
First, $E_Z$ may have several components.
To circumvent this, we develop a refined version of Koll\'ar's gluing theory, to guarantee that we can work on each component of $E_Z$ separately and then glue the complement.
In order to construct a complement that glues to the whole $E_Z$, we then consider the curves $E \subs Z,i. \cap E \subs Z,j.$ obtained as intersection of different connected components of $E \subs Z.$.
We first construct a complement on these curves and then lift it to the various components of $E_Z$.
In doing so, we face the second main difficulty:
$-(K_Z+B_Z+E_Z)$ may not be big on certain components of $E$.
Indeed, when $-(K_Z+B_Z+E_Z)$ is big along some component $E \subs Z,i.$, we can lift any complement constructed on $E \subs Z,i. \cap E \subs Z,j.$, while it is no longer the case if  $-(K_Z+B_Z+E_Z)$ is not big along $E \subs Z,i.$.
Thus, in \S\ref{sec:complements-sldt} we perform a detailed analysis of these cases that allows us to choose suitable complements on $E \subs Z,i. \cap E \subs Z,j.$ that can be lifted to $E \subs Z,i.$ even in the absence of positivity conditions.
This is achieved by proving an effective version of Koll\'ar's gluing theory 
for semi-dlt surfaces in \S\ref{effective-Kollar-gluing}.
Once a complement on $E_Z$ is constructed, we have to lift it to $Z$.
This represents the last technical challenge, as $(Z,B_Z+E_Z)$ is log canonical but not klt, thus vanishing theorems may not apply.
To circumvent this issue, in \S\ref{log-canonical-Fano}, we show that we can replace $Z$ with another suitable dlt model of $(Z_0,B \subs Z_0.+E_{Z_0})$ so that $(Z,B_Z+E_Z)$ can be perturbed to a klt pair.

\subsection{DCC coefficients}
In the two previous steps, 
we proved Theorem~\ref{main-theorem}
for $\Lambda=\Phi(\mathcal{R})$ (this is the statement of Theorem~\ref{main-theorem-hyper}).
Once this is established, we argue that a DCC set $\Lambda \subset [0,1] \cap \qq$ can be effectively approximated (from the perspective of the theory of complements) by a finite set $\lbrace \frac{1}{m}, \ldots , \frac{m-1}{m}, 1 \rbrace$ for some positive integer $m \gg 0$.
This is the content of Lemma \ref{coefficients-perturbation}, which is a generalization of \cite{FM18}*{Lemma 3.2}.
This finishes the proof of Theorem~\ref{main-theorem}.

\section{Preliminaries}

Throughout this paper, we work over an algebraically closed field $k$ of characteristic zero.
All varieties considered in this paper are normal unless otherwise stated.
In this section, we will collect some definitions and preliminary results which will be used in this article.

\subsection{Contractions} In this paper a {\em contraction} is a projective morphism of quasi-projective varieties $f  \colon  X \rar Z$ with $f_* \O X. = \O Z.$. Notice that, if $X$ is normal, then so is $Z$.

\subsection{Hyper-standard sets}

Let $\mathcal R$ be a subset of $[0,1]$.
Then, we define the \emph{set of hyper-standard multiplicities} associated to $\mathcal R$ as
\[
\Phi (\mathcal R) \coloneqq \bigg \lbrace \left. 1- \frac{r}{m}  \right| r \in \mathcal R, m \in \nn \bigg \rbrace.
\]
When $\mathcal R = \lbrace 0,1 \rbrace$, we call it the set of \emph{standard multiplicities}.
Usually, with no mention, we assume $0,1 \in \mathcal R$, such that $\Phi (\lbrace 0,1 \rbrace ) \subset \Phi (\mathcal R)$.
Furthermore, if $1-r \in \mathcal R$ for every $r \in R$, we have that $\mathcal R \subset \Phi (\mathcal R)$.

Now, assume that $\mathcal{R} \subset [0,1]$ is a finite set of rational numbers.
Then, $\Phi(\mathcal R)$ is a set of rational numbers satisfying the \emph{descending chain condition} (\emph{DCC} in short) whose only accumulation point is 1.
We define $I (\mathcal R)$ to be the smallest positive integer such that $I(\mathcal R) \cdot \mathcal R \subset \nn$.
The following is a useful property of $I (\mathcal R)$.

\begin{proposition} \label{prop arithmetic}
Let $\mathcal R \subset [0,1]$ be a finite set of rational numbers, and let $n$ be a positive integer divisible by $I (\mathcal R)$.
Fix $t \in \Phi(\mathcal{R})$.
Then, we have $\lfloor (n+1) t \rfloor - nt \geq 0$.
\end{proposition}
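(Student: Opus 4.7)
The plan is to unwind the definitions and reduce to a clean inequality about fractional parts. Write $t = 1 - r/m$ for some $r \in \mathcal R$ and $m \in \nn$. Since $I(\mathcal R) \cdot \mathcal R \subset \nn$ and $I(\mathcal R) \mid n$, the product $nr$ is an integer. Perform Euclidean division $nr = qm + s$ with $0 \leq s \leq m-1$, so that
\[
nt \;=\; n - q - \frac{s}{m}, \qquad \lfloor nt \rfloor = \begin{cases} n-q & \text{if } s = 0, \\ n-q-1 & \text{if } s > 0, \end{cases}
\]
and the fractional part $\{nt\}$ equals $0$ when $s=0$ and $(m-s)/m$ when $s>0$.

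Next I would split $(n+1)t = nt + t$ and use the identity $\lfloor a + b \rfloor = \lfloor a \rfloor + \lfloor \{a\} + b \rfloor$ for $b \geq 0$. The desired inequality $\lfloor (n+1)t \rfloor - nt \geq 0$ becomes $\lfloor \{nt\} + t \rfloor \geq \{nt\}$. The case $s = 0$ is immediate: both sides equal $0$ (or one notes separately the trivial subcase $t = 1$, i.e. $r = 0$).

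The case $s > 0$ is the whole content. Here $\{nt\} = (m-s)/m$ and $t = (m-r)/m$, so
\[
\{nt\} + t \;=\; \frac{2m - s - r}{m}.
\]
I claim this is at least $1$. Indeed, $s \leq m - 1$ because $s$ is a remainder modulo $m$, and $r \leq 1$ because $\mathcal R \subset [0,1]$; adding these gives $s + r \leq m$, which is exactly $\{nt\} + t \geq 1$. Therefore $\lfloor \{nt\} + t \rfloor \geq 1 > \{nt\}$, and the inequality holds.

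There is no real obstacle here: the statement is a transparent arithmetic consequence of the fact that the factor $I(\mathcal R)$ clears the denominators of $\mathcal R$ (making $nr$ integral and letting us speak of $s = nr \bmod m$), combined with the mild bound $r \leq 1$. The only subtlety worth noting explicitly is that one must allow $r$ to be a non-integer rational in $[0,1]$, so the estimate $s + r \leq m$ is a genuine real inequality rather than one between integers; it is nevertheless sharp, e.g.\ achieved when $r = 1$ and $nr \equiv -1 \pmod m$.
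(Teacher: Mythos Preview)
Your proof is correct and follows essentially the same approach as the paper's: both reduce to the elementary arithmetic observation that $nr \in \zz$ (from $I(\mathcal R) \mid n$) together with $r \leq 1$, then verify the floor inequality directly. Your version organizes the computation via Euclidean division and the fractional-part identity $\lfloor a+b\rfloor = \lfloor a\rfloor + \lfloor \{a\}+b\rfloor$, whereas the paper manipulates floors and ceilings in a single chain; the underlying content is the same.
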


\begin{proof}
By assumption, we have $t = 1- \frac{r}{m}$, where $nr,m \in \nn$.
Then, we may write
\begin{align}
\nonumber    \Big \lfloor (n+1) \left( 1 - \frac{r}{m} \right) \Big \rfloor - n \left( 1 - \frac{r}{m} \right) = & \Big \lfloor n+1 - \frac{r(n+1)}{m} \Big \rfloor - \left( n - \frac{rn}{m} \right) \\
\nonumber     =& \Big \lfloor  1 - \frac{r(n+1)}{m} \Big \rfloor + \frac{rn}{m} \\
\nonumber =& \Big \lfloor - \frac{r(n+1)}{m} \Big \rfloor + 1 + \frac{rn}{m}\\
\nonumber =& 1 + \frac{rn}{m} - \Big \lceil  \frac{r(n+1)}{m} \Big \rceil \\
\nonumber =& 1 + \frac{rn}{m} - \Big \lceil  \frac{rn}{m} \Big \rceil \geq 0,
\end{align}
where the last equality follows from the fact that $rn \in \nn$.
\end{proof}

\subsection{Divisors} Let $X$ be a normal quasi-projective variety. We say that $D$ is a divisor on $X$ if it is a $\qq$-Weil divisor, i.e., $D$ is a finite sum of prime divisors on $X$ with coefficients in $\qq$.
The \emph{support} of a divisor $D=\sum_{i=1}^n d_iP_i$ is the union of the prime divisors appearing in the formal sum, $\mathrm{Supp}(D)= \sum_{i=1}^n P_i$.
Given a prime divisor $P$ in the support of $D$, we will denote by $\mult_P (D)$ the coefficient of $P$ in $D$.
Given a divisor $D = \sum \subs i=1. ^n d_i P_i$, we define its {\it round down} $\lfloor D \rfloor  \coloneqq \sum \subs i=1.^n \lfloor d_i \rfloor P_i$.
The {\it round up} $\lceil D \rceil$ of $D$ is defined analogously.
The {\it fractional part} $\{D\}$ of $D$ is defined as $\{D\} \coloneqq D - \lfloor D \rfloor$.
Let $D_1 = \sum \subs i=1. ^n d_i P_i$ and $D_2= \sum \subs i=1. ^n e_i P_i$ be two divisors.
We define $D_1 \wedge D_2 \coloneqq \sum \subs i=1. ^n \min \lbrace d_i,e_i \rbrace P_i$.
Similarly, we set $D_1 \vee D_2 \coloneqq \sum \subs i=1. ^n \max \lbrace d_i,e_i \rbrace P_i$.
For a divisor $D= \sum d_i P_i$ and a real number $a$, we set $D \sups \leq a. \coloneqq \sum \subs d_i \leq a. d_i P_i$.
The divisors $D \sups < a.$, $D \sups = a.$, $D \sups \geq a.$, and $D \sups > a.$ are defined analogously.

Let $f  \colon  X \rar Z$ be a projective morphism of quasi-projective varieties.
Given a divisor $D = \sum d_i P_i$ on $X$, we define
\[
D^v \coloneqq  \sum_{f(D_i) \subsetneq Z} d_i P_i, \; 
D^h \coloneqq  \sum_{f(D_i) = Z} d_i P_i.
\]
We call $D^v$ and $D^h$ the \emph{vertical part} and \emph{horizontal part} of $D$, respectively.
Let $D_1$ and $D_2$ be divisors on $X$.
We write $D_1 \sim_Z D_2$ (respectively $D_1 \sim \subs \qq,Z . D_2$) if there is a Cartier (respectively $\qq$-Cartier) divisor $L$ on $Z$ such that $D_1 - D_2 \sim f^*L$ (respectively $D_1 - D_2 \sim_\qq f^*L$).
Equivalently, we may also write $D_1 \sim D_2$ over $Z$.
The case of $\qq$-linear equivalence is denoted similarly.
Let $z$ be a point in $Z$.
We write $D_1 \sim D_2$ over $z$ if $D_1 \sim_{Z} D_2$ holds after possibly shrinking $Z$ around $z$.
We also make use of the analogous notion for $\qq$-linear equivalence.

Let $X \rar Z$ be a projective morphism, and let $D$ be a $\qq$-Cartier divisor on $Z$.
When the context is clear, and there is no ambiguity arising from a possible adjunction to a log canonical center, we may write $D|_X$ for the pull-back of $D$ to $X$.
This notation is useful when we are dealing with several morphisms that may not be labeled by a letter.

\subsection{Pairs}

A \emph{sub-pair} $(X,B)$ is the datum of a normal quasi-projective variety and a divisor $B$ such that $\K X. + B$ is $\qq$-Cartier.
If $B \sups \leq 1. = B$, we say that $B$ is a \emph{sub-boundary}, and if in addition $B \geq 0$, we call it \emph{boundary}.
A sub-pair $(X,B)$ is called a \emph{pair} if $B \geq 0$.
A sub-pair $(X,B)$ is \emph{simple normal crossing} (or \emph{log smooth}) if $X$ is smooth, every irreducible component of $\Supp(B)$ is smooth, and \'etale locally $\Supp(B) \subset X$ is isomorphic to the intersection of $r \leq n$ coordinate hyperplanes in $\mathbb{A}^n$.
A \emph{log resolution} of a sub-pair $(X,B)$ is a birational contraction $\pi \colon X' \rar X$ such that $\mathrm{Ex}(\pi)$ is a divisor and $(X',\pi \sups -1._* \Supp(B) + \mathrm{Ex}(\pi))$ is log smooth.
Here $\mathrm{Ex}(\pi) \subset X'$ is the \emph{exceptional set} of $\pi$, i.e., the reduced subscheme of $X'$ consisting of the points where $\pi$ is not an isomoprhism.

Let $(X,B)$ be a sub-pair, and let $\pi \colon X' \rar X$ be a birational contraction from a normal variety $X'$.
Then, we can define a sub-pair $(X',B')$ on $X'$ via the identity
\[
\K X'. + B' = \pi^*(\K X. + B),
\]
where we assume that $\pi_* \K X'. = \K X.$.
We call $(X',B')$ the \emph{log pull-back} or \emph{trace} of $(X,B)$ on $X'$.
The \emph{log discrepancy} of a prime divisor $E$ on $X'$ with respect to $(X,B)$ is defined as $a_E(X,B) \coloneqq 1 - \mult_E (B')$.
We say that a sub-pair $(X,B)$ is \emph{sub-log canonical} (resp. \emph{sub-klt}) if $a_E(X,B) \geq 0$ (resp. $a_E(X,B) > 0$) for every $\pi$ and every $E$ as above.
When $(X,B)$ is a pair, we say that $(X,B)$ is \emph{log canonical} or \emph{klt}, respectively.
Notice that, if $(X,B)$ is log canonical (resp. klt), we have $0 \leq B = B \sups \leq 1.$ (resp. $0 \leq B = B \sups < 1.$).

Let $(X,B)$ be a sub-pair.
A \emph{non-klt place} is a prime divisor $E$ on a birational model of $X$ such that $a_E(X,B) \leq 0$.
A \emph{non-klt center} is the image of a non-klt place.
If $a_E(X,B)=0$, we say that $E$ is a \emph{log canonical place}, and the corresponding center is said to be a \emph{log canonical center}.
The \emph{non-klt locus} $\mathrm{Nklt}(X,B)$ is defined as the union of all the non-klt centers of $(X,B)$.
Similarly, the \emph{non-log canonical locus} $\mathrm{Nlc}(X,B)$ is defined as the union of all the non-klt centers of $(X,B)$ that are not log canonical centers.
Given a sub-log canonical sub-pair $(X,B)$ and an effective $\qq$-Cartier divisor $D$, we define the \emph{log canonical threshold} of $D$ with respect to $(X,B)$ as
\[
\lct (X,B;D) \coloneqq \sup \lbrace t \geq 0 | (X,B+tD) \text{ is sub-log canonical} \rbrace.
\]

In this paper, we make use of the standard results of the MMP.
We refer to \cites{BCHM,KM98} for the main results and the standard terminology.
Furthermore, as this work is mainly concerned with varieties of dimension 3, we refer to \cites{KMM94,Sho96} for the additional results that hold in dimension 3.

The minimal model program allows building suitable birational modifications of a pair.
A pair $(X,B)$ is called \emph{dlt} if there exists a closed subset $Z \subset X$ such that $(X \setminus Z, B | \subs X \setminus Z.)$ is log smooth and every divisor $E$ on a birational model whose center on $X$ is contained in $Z$ satisfies $a_E(X,B) > 0$.
Recall that a normal variety $X$ is called \emph{$\qq$-factorial} if every divisor is $\qq$-Cartier.
Given a pair $(X,B)$, a \emph{dlt model} is a birational model $\pi \colon X' \rar X$ with reduced exceptional divisor $E'$ such that $X'$ is $\qq$-factorial, $(X',\pi \sups -1._* (B \wedge \Supp(B)) + E')$ is dlt, and every $\pi$-exceptional divisor $D$ satisfies $a_D(X,B) \leq 0$.
The existence of these models is due to Hacon \cite{KK10}*{Theorem 3.1}.
Relying on the MMP for 3-folds \cite{Sho96}, we can prove the following refinement of \cite{KK10}*{Theorem 3.1}.

\begin{lemma} \label{lemma dlt model}
Let $(X,B)$ be a log canonical pair with $\dim X \leq 3$.
Fix a log resolution $f \colon X'' \rar X$ of $(X,B)$, and let $E''_1, \ldots , E''_k$ be the prime divisors on $X''$ with log discrepancy equal to 0.
Then, we can run a suitable MMP on $X''$ to obtain a dlt model $X'$ of $(X,B)$ such that $E'_1, \ldots ,E'_k$ are the only divisors extracted.
Here, $E'_i$ is the strict transform on $X'$ of $E''_i$.
In particular, the rational map $X'' \drar X'$ is an isomorphism along the generic point of each log canonical center of $(X'',B'')$.
Furthermore, we may choose $X'$, so that every log canonical center of $(X',E'_1+\dots+E'_k)$ contained in the exceptional locus of $X' \rar X$ is contained in
the support of some $E'_i$.
\end{lemma}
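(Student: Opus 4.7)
The plan is to construct $X^m$ by running a relative $(K_{X'}+\Delta)$-MMP on the log resolution $X'$, where the boundary $\Delta$ is chosen so that the MMP contracts exactly the $f$-exceptional prime divisors with positive log discrepancy with respect to $(X,B)$.

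Denote by $F'_1,\dots,F'_\ell$ the $f$-exceptional prime divisors distinct from $E'_1,\dots,E'_k$; each appears in the log pullback $B'=f^{-1}_* B+\sum_{E\text{ exc}}(1-a_E(X,B))E$ with coefficient $b_j=1-a_{F'_j}(X,B)<1$. Set
\[
\Delta := f^{-1}_* B+\sum_{i=1}^{k} E'_i+\sum_{j=1}^{\ell} F'_j.
\]
Since $(X',\Supp(B')+\mathrm{Ex}(f))$ is log smooth and every coefficient of $\Delta$ is at most $1$, the pair $(X',\Delta)$ is $\qq$-factorial dlt with each $E'_i$ and each $F'_j$ a component of $\lfloor\Delta\rfloor$. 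A direct computation gives
\[
K_{X'}+\Delta = f^*(K_X+B)+G, \qquad G := \sum_{j=1}^{\ell} a_{F'_j}(X,B)\,F'_j,
\]
so $G\geq 0$ is $f$-exceptional with $\Supp(G)=\bigcup_j F'_j$, and $K_{X'}+\Delta\equiv_{X} G$.

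I then run a $(K_{X'}+\Delta)$-MMP over $X$ with scaling of an ample divisor; since $\dim X\leq 3$, this MMP exists and terminates by \cites{Sho96,KMM94}. Each extremal ray $R$ encountered has $G\cdot R<0$, hence any curve with class in $R$ is contained in $\Supp(G)$; by the theory of very exceptional divisors, and the argument used to prove \cite[Theorem 3.1]{KK10}, the MMP contracts precisely the $F'_j$'s while preserving the $E'_i$'s. At termination, $(X^m,\Delta^m)$ is $\qq$-factorial dlt with $K_{X^m}+\Delta^m$ nef over $X$; applying the negativity lemma to the pushforward $G^m$, which is $f_m$-exceptional and satisfies $G^m\equiv_X K_{X^m}+\Delta^m$, we conclude $G^m=0$ and hence $K_{X^m}+\Delta^m=f_m^*(K_X+B)$. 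This produces the desired dlt model extracting exactly $E'_1,\dots,E'_k$.

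The isomorphism along the generic points of the log canonical centers of $(X',B')$ follows because every MMP step modifies only a neighborhood of a component of $\Supp(G)$, and the log canonical centers of $(X',B')$ are strata of $\lfloor B'\rfloor=f^{-1}_* B^{=1}+\sum_i E'_i$, none of which lies inside any $F'_j$. For the final refinement, choose the initial log resolution $X'$ fine enough that the preimage of every non-klt center of $(X,B)$ is a union of strata of $\lfloor B'\rfloor$; then every log canonical center of $(X^m,\Delta^m)$ contained in $\mathrm{Ex}(f_m)$ is the pushforward of such a stratum, and since all $F'_j$'s are contracted, it must be contained in some $E'_i$. The main technical point, and the only non-formal step, is controlling the MMP finely enough to guarantee that no $E'_i$ is contracted; this is where the three-dimensional MMP and the very-exceptional-divisor techniques are essential.
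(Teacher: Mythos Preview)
Your approach is essentially the same as the paper's, with only a cosmetic difference in the choice of boundary for the MMP. You take $\Delta = f^{-1}_*B + \sum_i E'_i + \sum_j F'_j$ (every exceptional divisor with coefficient $1$), so that $K_{X'}+\Delta \equiv_X G$ with $\Supp(G)=\bigcup_j F'_j$. The paper instead runs the MMP for $K_{X'}+\Delta'+E'+F'+\epsilon P'$, where the $F'_j$ receive coefficients strictly less than $1$; this is numerically $\epsilon P'+G'$ over $X$, again supported exactly on $\bigcup_j F'_j$. In both cases the MMP is a relative MMP for an effective exceptional divisor supported on the non--log-canonical exceptional components, negativity forces that support to be contracted, and $3$-fold termination gives the dlt model. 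Your claim that no $E'_i$ is contracted is justified exactly as in the paper: any contracted divisor lies in the support of the MMP divisor.

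For the isomorphism along generic points of log canonical centers, your geometric argument (the exceptional locus of each step lies in the current $\Supp(G_i)$, and log canonical centers of $(X',B')$ are strata of $\lfloor B'\rfloor$, hence by SNC cannot lie in any $F'_j$) is correct but you should spell out the easy induction: if $\eta_{Z_{i}}\notin\Supp(G_{i})$, the step is an isomorphism near $\eta_{Z_i}$, so the image $\eta_{Z_{i+1}}$ again avoids $\Supp(G_{i+1})$. The paper argues this differently, via a discrepancy comparison (each step is a $(K+B)$-flop, so a log canonical place of $(X_i,B_i)$ would have to remain one after the step, contradicting strict monotonicity for the MMP pair). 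Both arguments are valid; yours is more direct. Your treatment of the final clause is vague, but in fact nothing needs to be done: since $\mathrm{Ex}(f)=\bigcup_i E'_i\cup\bigcup_j F'_j$ is SNC and every log canonical center of $(X',B')$ is a stratum of $\lfloor B'\rfloor=\sum_i E'_i$, any such center contained in $\mathrm{Ex}(f)$ already lies in some $E'_i$, and this transfers to $X^m$ via the isomorphism at generic points. The paper's blow-up procedure is therefore not actually needed, and your condition ``preimage of every non-klt center is a union of strata'' is neither the right formulation nor necessary.
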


\begin{proof}
Define $\Delta \coloneqq \lbrace B \rbrace$, and set $\Delta'' \coloneqq f \sups -1._* \Delta$.
We define $B'$ via the identity $\K X''. + B'' = f^*(\K X. + B)$.
Then, we can decompose $B'$ as $B''= \Delta'' + E'' + F'' - G''$, where
\begin{itemize}
    \item $E'' = \sum \subs i=1. ^k E''_i$ denotes the (not necessarily $f$-exceptional) divisors with log discrepancy equal to 0;
    \item $F'' \geq 0$ is supported on the $f$-exceptional divisors with log discrepancy in $(0,1]$; and
    \item $G'' \geq 0$ is supported on the $f$-exceptional divisors with log discrepancy strictly greater than 1.
\end{itemize}
Furthermore, let $P''$ be the sum of the $f$-exceptional divisors with log discrepancy in $(0,1]$, and $Q''$ be the sum of the $f$-exceptional divisors with log discrepancy strictly greater than 1.
Fix $0 < \epsilon \ll 1$.
Then, we have
\[
\K X''. + \Delta'' + E'' + F'' + \epsilon P'' \sim_\qq \epsilon P'' + G''/X,
\]
and the pair $(X'',\Delta'' + E'' + (1+\epsilon)F'')$ is dlt.
Notice that $\Supp(\epsilon P'' + G'')=P''+Q''$.
Since $\dim X \leq 3$, we can run a $(\K X''. + \Delta'' + E'' + F'' + \epsilon P'')$-MMP relative to $X$, which terminates with a $\qq$-factorial minimal model $X'$ \cite{Sho96}.
Notice that this is an $(\epsilon P'' + G'')$-MMP.
In particular, any divisor contracted by this MMP is in $\Supp(P'' + Q'')$.
Denote by $\Gamma'$ the strict transform of any given divisor $\Gamma''$ on $X''$.
Then, by the negativity lemma \cite{KM98}*{Lemma 3.39}, we have $\epsilon P' + G'=0$.
In particular, all the divisors extracted by $X' \rar X$ are $E'_1, \ldots, E'_k$.
\newline
Now, we check that $X'' \drar X'$ is an isomorphism along the generic point of every stratum of $(B'') \sups =1.$.
Let $X_i \drar X \subs i+1.$ denote a step of the above MMP.
Let $\Gamma_i$ and $\Gamma \subs i+1.$ denote the strict transforms of any given divisor $\Gamma''$ on $X_i$ and $X \subs i+1.$, respectively.
Let $Z''$ be a log canonical center of $(X'',B'')$, and assume that $X'' \drar X_i$ is an isomorphism along the generic point of $Z''$.
Assume that $X_i \drar X_{i+1}$ is not an isomorphism along the generic point of $Z_i$, the image of $Z''$ on $X_i$.
Let $P$ be a log canonical place of $(X_i,B_i)$ corresponding to $Z_i$.
Then, as $\Delta_i + E_i + F_i + \epsilon P_i  \geq B_i$, $P$ is a log canonical place of $(X_i,\Delta_i + E_i + F_i + \epsilon P_i)$.
By \cite{KM98}*{Lemma 3.38}, $P$ is not a log canonical place of $(X_{i+1},\Delta_{i+1} + E_{i+1} + F_{i+1} + \epsilon P_{i+1})$.
This is a contradiction, as $X_i \drar X \subs i+1.$ is $(\K X_i. + B_i)$-trivial, and $P$ is a log canonical place for $(X \subs i+1.,B \subs i+1.)$.
\newline
Now, we are left with showing the last part of the statement.
Fix a log resolution $f \colon X'' \rar X$.
Notice that every log canonical center of $(X'',B'')$ is a log canonical center of the log smooth pair $(X'',\Delta'' + E'' + F'' + \epsilon P'')$, where $0 < \epsilon \ll 1$.
Let $Z_1,\ldots,Z_l$ be the log canonical centers of $(X'',B'')$ that are contained in $\mathrm{Ex}(f)$ and that are not contained in any of the $E''_i$.
Then, these are log canonical centers of the log smooth pair $(X'',\Delta'' + E'' + F'' + \epsilon P'' + \delta Q'')$, where $0 < \delta \ll 1$.
In particular, each $Z_i$ is smooth.
Therefore, we can blow up $Z_1$, then the strict transform of $Z_2$ on this first blow-up, etc., to obtain a new log smooth model.
By abuse of notation, we replace $X''$ with this model.
In particular, we can assume that all the log canonical centers of $(X'',B'')$ that are contained in $\mathrm{Ex}(f)$ are contained in one of the $E''_i$.
Now, let $X'$ be the $\qq$-factorial dlt model of $(X,B)$ constructed from $X''$ as above.
Then, as $X'' \drar X'$ is an isomorphism along the generic point of every log canonical center of $(X'',B'')$, the above property is preserved on $X'$.
More precisely, every log canonical center of $(X',B')$ that is contained in $\mathrm{Ex}(X' \rar X)$ is contained in $E'_i$ for some $i$.
\end{proof}

\begin{remark} \label{remark dlt model}
    Let $(X,B)$, $X''$, and $X'$ be as in Lemma \ref{lemma dlt model}.
    Further, assume that $X'' \rar X$ is obtained by blowing up centers of codimension at least 2.
    Then, there exists an effective divisor $\Gamma$ that is exceptional for $X'' \rar X$ and such that $-\Gamma$ is ample over $X$, see \cite{kol21}*{\S8}.
    Then, to resolve the rational map $X'' \drar X'$, we do not need to extract any log canonical place of $(X,B)$.
    Furthermore, we can resolve $X'' \drar X''$ by blowing up loci of codimension at least 2.
    Let $X'''$ be the obtained model.
    Then, we have that
    \begin{itemize}
        \item $X'''$ is obtained from $X$ blowing up loci of codimension at least 2;
        \item there exists an effective divisor $\Sigma$ that is exceptional for $X''' \rar X$ such that $-\Sigma$ is ample over $X$; and
        \item the log canonical places of $(X,B)$ extracted on $X'''$ are the same as the ones extracted on $X''$ and $X'$. Indeed, $X'' \drar X'$ is an isomorphism along the generic points of the log canonical centers; thus, the generic points of the blow-ups are contained in an open set that contains no log canonical center.
    \end{itemize}
    Furthermore, up to a further blow-up, we may assume that $X'''$ is a log resolution of $(X,B)$.
    Hence, replacing $X''$ with $X'''$, we may assume the three conditions above are satisfied.
\end{remark}

\begin{remark}
In Lemma \ref{lemma dlt model}, the assumption $\dim X \leq 3$ is required to have termination for an arbitrary dlt MMP.
The original proof of the existence of dlt models, originally due to Hacon, connects the log resolution $X''$ with the final model $X'$ with an MMP.
On the other hand, the presence of a correction term ($C$ in the notation of \cite{KK10}*{Theorem 3.1}) does not guarantee that no log canonical place that is present on $X''$ is contracted by the MMP.
By \cite{BZ16}*{Lemma 4.6}, one can replace $X'$ with a higher model $X'''$, such that the exceptional divisors of $X''' \rar X$ are exactly the log canonical places extracted on $X''$.
On the other hand, $X''$ and $X'''$ are not necessarily connected by an MMP.
\end{remark}

\subsection{Non-normal pairs}
Let $X$ be a non-normal variety.
We say that $X$ is \emph{demi-normal} if it satisfies Serre's condition $S_2$ and its codimension one points are either regular points or nodes.
Let $\pi \colon X^\nu \rar X$ denote the normalization of $X$.
The \emph{conductor ideal} $\mathcal{H}om_X(\pi_* \O X^\nu.,\O X.) \subset \O X.$ is the largest ideal sheaf on $X$ that is also an ideal sheaf on $X^\nu$.
Therefore, it defines two subschemes $D \subset X$ and $D^\nu \subset X^\nu$, which are called \emph{conductor subschemes}.
Notice that a rational involution $D^\nu \drar D^\nu$ is naturally induced; the rational involution extends to a regular involution on the normalization of $D^\nu$.
Let $X$ be a demi-normal scheme, and let $B$ be an effective divisor whose support does not contain any irreducible component of the conductor $D$.
Let $B^\nu$ denote the divisorial part of $\pi \sups -1. (B)$.
Then, we say that $(X,B)$ is a \emph{semi-log canonical pair} if $\K X. + B$ is $\qq$-Cartier and $(X^\nu,B^\nu + D^\nu)$ is log canonical.
We refer to \cite{Kol13}*{\S5.1} for the notion of a divisor on a demi-normal scheme and the notion of pull-back for $\K X. + B$.
In particular, we have that the notion of log discrepancy is well defined for semi-log canonical pairs.
Let $(X,B)$ a semi-log canonical pair.
We say that $(X,B)$ is \emph{semi-dlt} if every irreducible component of $X$ is normal, and $(X^\nu,B^\nu + D^\nu)$ is dlt.
Notice that this definition agrees with the one in \cite{Fuj00}, and it is stricter than the one in \cite{Kol13}.
By \cite{Fuj00}*{Remark 1.2}, if $(Y,\Delta)$ is a dlt pair, then $(\lfloor \Delta \rfloor, \mathrm{Diff}(\Delta - \lfloor \Delta \rfloor)$ is a semi-dlt pair.

\subsection{Semi-normal curves}
Let $X$ be a scheme, and let $\pi \colon X' \rar X$ be a finite morphism.
The morphism $\pi$ is a \emph{partial semi-normalization} if $X'$ is reduced, each point $x \in X$ has exactly one preimage $x' \coloneqq \pi \sups -1. (x)$, and $\pi ^* \colon k(x) \rar k(x')$ is an isomorphism.
A scheme $X$ is called \emph{semi-normal} if every partial semi-normalization $\pi \colon X' \rar X$ is an isomorphism.
In particular, a semi-normal scheme is reduced.
Over an algebraically closed field, a curve singularity $(0 \in C)$ is semi-normal if and only if it is analytically isomorphic to the union of $n$ coordinate axes in $\mathbb{A}^n$ \cite{Kol13}*{Example 10.12}.

\subsection{B-birational maps and B-representations}
Let $(X,B)$ and $(X',B')$ be not necessarily normal pairs, and let $f \colon X \drar X'$ be a birational map.
We say that $f \colon (X,B) \drar (X',B')$ is \emph{B-birational} if there exists a common resolution $Y$ admitting morphisms $p \colon Y \rar X$ and $q \colon Y \rar X'$ such that $p^*(\K X. + B)=q^*(\K X'. + B')$.
Notice that, if $f$ is a B-birational map, it induces a bijection between the irreducible components of $X$ and $X'$.
We refer to \cite{Fuj00} for more details about B-birational maps.

Let $(X,B)$ be a not necessarily normal pair. We define the group of self B-birational maps as
\[
\mathrm{Bir}(X,B) \coloneqq \lbrace f| f \colon (X,B) \drar (X,B) \; \text{is B-birational} \rbrace.
\]
Let $m$ be any integer such that $m(\K X. + B)$ is a Cartier divisor.
By definition of B-birational map, every $f \in \mathrm{Bir}(X,B)$ induces an automorphism of $H^0(X,\O X. (m(\K X. + B))$.
In particular, we have an induced representation
\[
\rho_m \colon \mathrm{Bir}(X,B) \rar \mathrm{Aut}(H^0(X,\O X. (m(\K X. + B))).
\]
Under suitable assumptions, the image of this representation is finite.
In particular, we have the following statement.
\begin{theorem}[{\cite{FG14b}*{Theorem 3.15}}] \label{Fujino Gongyo finite}
Let $(X,B)$ be a projective (not necessarily connected) log canonical pair such that $\K X. + B$ is semi-ample.
Let $m$ be a positive integers such that $m(\K X. + B)$ is Cartier.
Then, $\rho_m(\mathrm{Bir}(X,B))$ is a finite group.
\end{theorem}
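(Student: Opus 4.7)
The strategy is to show $\rho_m(\mathrm{Bir}(X,B))$ is simultaneously a subgroup of a linear algebraic group and of a compact subgroup of $\mathrm{GL}(H^0(X,\O X.(m(K_X+B))))$; any algebraic subgroup of $\mathrm{GL}_N(\cc)$ whose points are bounded has trivial identity component, and hence must be finite.

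First, I would use the semi-ample linear system to factor $\rho_m$ through a polarized automorphism group. Since $K_X+B$ is semi-ample and $m(K_X+B)$ is Cartier, $|m(K_X+B)|$ defines a morphism $\phi \colon X \to Y$ onto a projective variety $Y$ with ample line bundle $\mathcal{O}_Y(1)$ satisfying $\phi^*\mathcal{O}_Y(1) \cong \O X.(m(K_X+B))$. Given $f \in \mathrm{Bir}(X,B)$ with common resolution $p,q \colon W \to X$ satisfying $p^*(K_X+B)=q^*(K_X+B)$, the induced linear map $\rho_m(f) \in \mathrm{GL}(H^0(X,\O X.(m(K_X+B))))$ preserves the embedded subvariety $Y \subset \pp(H^0(X,\O X.(m(K_X+B)))^*)$, so $\rho_m$ factors through the linear algebraic group $\mathrm{Aut}(Y,\mathcal{O}_Y(1))$. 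When $X$ is disconnected, $\mathrm{Bir}(X,B)$ permutes the connected components through a finite symmetric quotient, so one may reduce to the connected case modulo that finite group.

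Next, I would construct a $\mathrm{Bir}(X,B)$-invariant norm on $H^0(X,\O X.(m(K_X+B)))$ via $L^{2/m}$-integration of $m$-pluri-log-canonical forms. Taking a log resolution $\pi \colon X' \to X$ with $K_{X'}+B'=\pi^*(K_X+B)$, a section $\sigma$ of $\O X.(m(K_X+B))$ corresponds to a meromorphic $m$-pluricanonical form on $X'$ with poles along $B'$; the assignment $\sigma \mapsto \left(\int_{X'_{\mathrm{klt}}} |\sigma|^{2/m}\right)^{m/2}$, restricted to the klt open locus where $B' < 1$, defines a continuous norm whose unit ball is a compact convex body with nonempty interior in the finite-dimensional space $H^0(X,\O X.(m(K_X+B)))$. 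The B-birationality identity $p^*(K_X+B)=q^*(K_X+B)$ ensures $\rho_m$ preserves this integral, and any subgroup of $\mathrm{GL}_N(\cc)$ preserving a compact convex body with nonempty interior is bounded, hence relatively compact.

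Combining the two observations, $\rho_m(\mathrm{Bir}(X,B))$ is finite. The main obstacle is to ensure the $L^{2/m}$-integral remains $\rho_m$-invariant and captures all of $H^0(X,\O X.(m(K_X+B)))$ near the reduced components of $B'$, where the naive integral diverges. Fujino and Gongyo circumvent this by restricting to log canonical centers via adjunction, applying the statement inductively in lower dimensions, and gluing the resulting finite groups using Koll\'ar's formalism for semi-log-canonical pairs; alternatively, one may pass to an index-one cyclic cover to eliminate the reduced boundary and recover the desired invariant norm on the original space by taking invariants under the Galois action. Either route converts the semi-ample log canonical setting into one where the tension between algebraicity and compactness cleanly forces finiteness.
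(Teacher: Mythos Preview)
The paper does not supply its own proof of this theorem: it is quoted verbatim from \cite[Theorem 3.15]{FG14b} and used as a black box in \S\ref{effective-Kollar-gluing}. There is therefore no in-paper argument to compare your proposal against.

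As for the proposal itself, your outline is the correct high-level strategy and matches the architecture of the Fujino--Gongyo proof: factor $\rho_m$ through the algebraic group $\mathrm{Aut}(Y,\mathcal{O}_Y(1))$ of the semi-ample model, and then force finiteness by producing an invariant norm. The first half of your sketch (algebraicity via the semi-ample map) is fine. The second half, however, is where the actual content lies, and your proposal does not resolve it. The $L^{2/m}$-integral on the klt locus is not a norm on $H^0(X,\O X.(m(\K X.+B)))$ in the strictly log canonical case: near a divisor appearing in $B'$ with coefficient $1$, the integrand behaves like $|z|^{-2}$ and the integral diverges for every nonzero section that does not vanish along that divisor. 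Restricting the domain of integration to the klt open set does not help, because the divergence occurs as one approaches its boundary. So the ``compactness'' half of your argument, as written, fails precisely in the case that distinguishes this theorem from the classical klt results of Nakamura--Ueno and Deligne.

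You correctly name this obstacle and correctly say that Fujino--Gongyo overcome it by induction on dimension via adjunction to log canonical centers, but you do not carry this out, and the alternative you mention (an index-one cover to ``eliminate the reduced boundary'') does not work as stated: a cyclic cover branched along $\lfloor B \rfloor$ does not in general produce a klt pair, and even when it does, recovering invariance of the resulting norm under the full group $\mathrm{Bir}(X,B)$ (not just the cover's Galois group) is nontrivial. In short, your proposal is an accurate table of contents for the Fujino--Gongyo argument, but the proof proper is the inductive step you gesture at, and that step is where essentially all of the work in \cite{FG14b} resides.
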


\subsection{Koll\'ar's gluing theory}
Koll\'ar developed a theory of quotients by finite equivalence relations \cite{Kol13}*{Chapter 9}.
In particular, it is a powerful tool to study a semi-log canonical pair $(X,B)$ via its normalization $(X^\nu,B^\nu+D^\nu)$.
Here, we just recall some key facts that will be used in \S\ref{effective-Kollar-gluing}.
We refer to \cite{Kol13} for the terminology involved and to \cites{HX13,HX16} for examples of the interplay between Koll\'ar's gluing theory and semi-log canonical pairs.

\begin{theorem}[{\cite{HX16}*{Theorem 1.4}}] \label{thm hx16}
Let $(X,B)$ be a semi-log canonical pair, $f \colon X \rar S$ a projective morphism, $\nu \colon X^\nu \rar X$ the normalization, and write $\nu^*(\K X. + B)=\K X^\nu. + B^\nu + D^\nu$, where $D^\nu$ is the double locus.
If $\K X^\nu. + B^\nu + D^\nu$ is semi-ample over $S$, then $\K X. + B$ is semi-ample over $S$.
\end{theorem}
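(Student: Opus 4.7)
The strategy is to descend the Iitaka fibration of $\K X^\nu. + B^\nu + D^\nu$ through the finite equivalence relation that glues $X^\nu$ back to $X$. Recall that a semi-log canonical pair is reconstructed from its normalization together with the involution $\tau$ on the normalization $\overline{D^\nu}$ of the double locus; thus $X = X^\nu / R$, where $R$ is the finite set-theoretic equivalence relation generated by $\tau$, and the task reduces to showing that the morphism determined by $|m(\K X^\nu. + B^\nu + D^\nu)|$ descends through $R$.

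First I would fix $m_0 > 0$ divisible enough that $m_0(\K X^\nu. + B^\nu + D^\nu)$ is Cartier and base point free over $S$, producing a contraction $\phi \colon X^\nu \to Y$ over $S$ with $m_0(\K X^\nu. + B^\nu + D^\nu) \sim \phi^* A$ for some $A$ relatively ample on $Y/S$. By adjunction along $D^\nu$ followed by the normalization $\overline{D^\nu} \to D^\nu$, one obtains a log canonical pair $(\overline{D^\nu}, \Theta)$ whose log canonical divisor is the restriction of $\K X^\nu. + B^\nu + D^\nu$, hence semi-ample over $S$; the gluing data is exactly a B-birational involution $\tau \in \mathrm{Bir}(\overline{D^\nu}, \Theta)$.

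Next, by Theorem~\ref{Fujino Gongyo finite}, the representation $\rho_{m_0}$ of $\mathrm{Bir}(\overline{D^\nu}, \Theta)$ on $H^0(\overline{D^\nu}, \O \overline{D^\nu}.(m_0(\K \overline{D^\nu}. + \Theta)))$ has finite image. Replacing $m_0$ by a sufficiently divisible multiple $m$, I may assume $\tau$ acts trivially on this space; equivalently, every section of $m(\K X^\nu. + B^\nu + D^\nu)$ restricts to a $\tau$-invariant section on $\overline{D^\nu}$, so $\phi|_{\overline{D^\nu}}$ is $\tau$-equivariant. Consequently $\phi$ is constant on the $R$-equivalence classes, and by Koll\'ar's quotient theory \cite[Chapter 9]{Kol13} the morphism $\phi$ factors as $X^\nu \to X \xrightarrow{\phi_X} Y$ over $S$. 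Hence $m(\K X. + B) \sim \phi_X^* A$, and since $A$ is relatively ample over $S$, $\K X. + B$ is semi-ample over $S$.

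The main obstacle is the descent step itself: one must ensure the finite equivalence relation $R$ is indeed well-behaved (proper and set-theoretic) and that $\tau$-equivariance of $\phi$ on $\overline{D^\nu}$ is enough to force $R$-invariance of $\phi$ on $X^\nu$, rather than only equivariance on the generic points of the double locus. This is where the semi-log canonical hypothesis is essential, since it is precisely the condition that packages the involution together with the pair $(X^\nu, B^\nu + D^\nu)$ into a descent datum to which Koll\'ar's gluing machinery applies.
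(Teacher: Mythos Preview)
Your proposal is essentially correct and follows the same approach as the paper's sketch (the paragraph immediately after the statement, which outlines the original argument of \cite{HX16}). Both rely on the same two ingredients: the finiteness of B-representations (Theorem~\ref{Fujino Gongyo finite}) to produce $\tau$-invariant sections on $\overline{D^\nu}$, and Koll\'ar's gluing theory \cite[Chapter~9]{Kol13} to descend through the finite equivalence relation $X^\nu \to X$.

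The only cosmetic difference is viewpoint: the paper's sketch phrases the descent in terms of sections---a section of $m(\K X^\nu. + B^\nu + D^\nu)$ descends to $X$ precisely when its restriction to $D^\nu$ is $\tau$-invariant, and one then checks that enough such sections exist to separate points---whereas you phrase it as descending the morphism $\phi$ and then identifying $m(\K X. + B)$ with $\phi_X^* A$. These are equivalent, but note that your final identification $m(\K X. + B) \sim \phi_X^* A$ is not quite automatic from the set-theoretic factorization alone; in practice one argues via the total spaces of the line bundles (as in the proof of Proposition~\ref{prop:effective-kollar-gluing}, which follows \cite[Section~3.2]{HX13}) so that the line bundle itself, not just the morphism, is the object being glued. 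This is a detail rather than a gap, and your acknowledgment of the descent step as the main obstacle is accurate.
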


The idea of the proof of Theorem \ref{thm hx16} is the following.
Assume for simplicity that $S= \Spec(\cc)$, $B=0$, and $(X^\nu,B^\nu + D^\nu)=(X_1,D_1) \sqcup (X_2,D_2)$, where each $D_i$ is normal and irreducible.
A section of $\O X^\nu. (m(\K X^\nu. + D^\nu))$ descends to a section of $\O X.(m\K X.)$ if its restriction to $D^\nu$ is invariant under the involution $\tau$ that exchanges $D_1$ and $D_2$.
Koll\'ar's gluing theory guarantees that, in order to show that $|\O X.(m\K X.)|$ separates $x_1$ and $x_2$, it suffices to find two sections $s_1,s_2 \in \Gamma(\O X^\nu. (m (\K X^\nu. + D^\nu)))$ that separate the preimages of $x_1$ and $x_2$ and such that each $s_i| \subs D^\nu.$ is $\tau$-invariant.
The theory of B-representations and, in particular,  Theorem \ref{Fujino Gongyo finite}, guarantee that we can find all the needed $\tau$-invariant sections in $|\O X^\nu.(m(\K X^\nu. + D^\nu))|$ for some $m$.
As we are interested in finding $n$-complements for a bounded $n$, we need an effective version of this approach.
In particular, we need to show that we can find enough invariant sections in $|\O X^\nu.(k(\K X^\nu. + D^\nu))|$ for a bounded $k$.
We develop this approach in \S\ref{effective-Kollar-gluing}.

\subsection{B-divisors}
Let $X$ be a normal variety, and consider the set of all proper birational morphisms $\pi \colon  X_\pi \rightarrow X$, where $X_\pi$ is normal.
This is a partially ordered set, where $\pi' \geq \pi$ if $\pi'$ factors through $\pi$.
We define the space of {\it Weil b-divisors} as the inverse limit
\begin{equation}
\mathbf{Div}(X)\coloneqq \varprojlim_\pi \mathrm{Div}(X_\pi),
\end{equation}
where $\mathrm{Div}(X_\pi)$ denotes the space of Weil divisors on $X_\pi$.
Then, we define the space of {\it $\qq$-Weil b-divisors} $\mathbf{Div}_\qq(X) \coloneqq \mathbf{Div}(X) \otimes \qq$.
In the following, by b-divisor we will mean a $\qq$-Weil b-divisor.
Equivalently, a b-divisor $\mathbf{D}$ can be described as a (possibly infinite) sum of geometric valuations $V_i$ of $k(X)$ with coefficients in $\qq$,
\[
 \mathbf{D}= \sum_{i \in I} b_i V_i, \; b_i \in \mathbb{\qq},
\]
such that for every normal variety $X'$ birational to $X$, only a finite number of the $V_i$ can be realized by divisors on $X'$. 
The {\it trace} $\mathbf{D}_{X'}$ of $\mathbf{D}$ on $X'$ is defined as 
\[
\mathbf{D}_{X'} \coloneqq \sum_{
\{i \in I \; | \; c_{X'}(V_i)= D_i, \; 
\codim_{X'} (D_i)=1\}} b_i D_i
\]
where $c_{X'}(V_i)$ denotes the center of the valuation on $X'$.

Given a b-divisor $\mathbf{D}$ over $X$, we say that $\mathbf{D}$ is a {\it b-$\qq$-Cartier} b-divisor if there exists a birational model $X'$ of $X$ such that $\mathbf{D}_{X'}$ is $\qq$-Cartier on $X'$, and for any model $r \colon X''  \rar X'$, we have $\mathbf{D}_{X''} = r^\ast \mathbf{D}_{X'}$.
When this is the case, we will say that $\mathbf{D}$ descends to $X'$ and write $\mathbf{D}= \overline{\mathbf{D}_{X'}}$.
We say that $\mathbf{D}$ is {\it b-effective}, if $\mathbf{D}_{X'}$ is effective for any model $X'$.
We say that $\mathbf{D}$ is {\it b-nef} (respectively, {\it b-free}), if it is b-$\qq$-Cartier and, moreover, there exists a model $X'$ of $X$ such that $\mathbf{D}= \overline{\mathbf{D}_{X'}}$ and $\mathbf{D}_{X'}$ is nef (respectively, intregral and free) on $X'$.
The notions of b-nef and b-free b-divisor can be extended analogously to the relative case.

\begin{example}
\label{discr.div.ex}
Let $(X, B)$ be a sub-pair.
The \emph{discrepancy b-divisor} $\mathbf{A}(X,B)$ is defined as follows: on a birational model $\pi \colon X' \rar X$, its trace $\mathbf{A}(X,B)_{X'}$ is given by the identity  $\K X'. = \pi^*(\K X. + B) + \mathbf{A}(X,B)_{X'}$.
Then, the b-divisor $\mathbf{A}^*(X,B)$ is defined taking its trace $\mathbf{A}^*(X,B)_{X'}$ on $X'$ to be $\mathbf{A}^*(X,B)_{X'} \coloneqq \sum \subs a_i > -1. a_i D_i$, where $\mathbf{A}(X,B)_{X'} = \sum_i a_i D_i$.
\end{example}

\subsection{Generalized pairs}
A {\em generalized sub-pair} $(X,B, \mathbf{M})/Z$ over $Z$  is the datum of:
\begin{itemize}
\item a normal variety $X  \rar Z$ projective over $Z$;
\item a divisor $B$ on $X$;
\item a b-$\qq$-Cartier b-divisor $\mathbf{M}$ over $X$ which descends to a nef$/Z$ Cartier divisor $\mathbf{M}_{X'}$ on some birational model $X' \rightarrow X$.
\end{itemize}
Moreover, we require that $K_X +B+ \mathbf{M}_X$ is $\qq$-Cartier.
If $B$ is effective, we say that $(X,B, \mathbf M)/Z$ is a \emph{generalized pair}.
The divisor $B$ is called the \emph{boundary part} of $(X,B, \mathbf M)/Z$, and $\mathbf M$ is called the \emph{moduli part}.
In the definition, we can replace $X'$ with a higher birational model $X''$ and $\mathbf{M}_{X'}$ with $\mathbf{M}_{X''}$ without changing the generalized pair.
Whenever $\mathbf{M}$ descends on $X''$, then the datum of the rational map $X'' \drar X$, $B$, and $\mathbf{M}_{X''}$ encodes all the information of the generalized pair.

Let $(X,B, \mathbf{M})/Z$ be a generalized sub-pair and $\rho\colon Y \rar X$ a projective birational morphism. 
Then, we may write
\[
\K Y.+B_Y + \mathbf{M}_{Y}=\pi^*(K_X+B+\mathbf M _X).
\]
Given a prime divisor $E$ on $Y$, we define the {\em generalized log discrepancy} of $E$ with respect to $(X,B, \mathbf M)/Z$  to be $a_E(X,B,\mathbf M)\coloneqq 1-\mult_{E}(B')$.
If $a_E(X,B,\mathbf M) \geq 0$ for all divisors $E$ over $X$, we say that $(X,B, \mathbf M)/Z$ is \emph{generalized sub-log canonical}.
Similarly, if $a_E(X,B+M) > 0$ for all divisors $E$ over $X$ and $\lfloor B \rfloor \leq 0$, we say that $(X,B, \mathbf M)/Z$ is \emph{generalized sub-klt}.
When $B \geq 0$, we say that $(X,B, \mathbf M)/Z$ is \emph{generalized log canonical} or \emph{generalized klt}, respectively.

\subsection{Canonical bundle formula}

We recall the statement of the {\em canonical bundle formula}. 
We refer to \cites{FM00,FG14,Fil18} for the notation involved and a more detailed discussion about the topic.
Let $(X, B)$ be a sub-pair.
A contraction $f \colon X \rar T$ is an \emph{lc-trivial fibration} if
\begin{itemize}
    \item[(i)] $(X,B)$ is a sub-pair that is sub-log canonical over the generic point of $T$;
    \item[(ii)] $\mathrm{rank} f_* \O X. (\lceil \mathbf{A}^*(X,B)\rceil)=1$, where $\mathbf{A}^*(X,B)$ is the b-divisor defined in Example \ref{discr.div.ex}; and
    \item[(iii)] there exists a $\qq$-Cartier divisor $L_T$ on $T$ such that $\K X. + B \sim_\qq f^* L_T$.
\end{itemize}
Condition (ii) above is automatically satisfied if $B$ is effective over the generic point of $T$.
Given a sub-pair $(X,B)$ and an lc-trivial fibration $f \colon X \rar T$, there exist b-divisors $\mathbf{B}$ and $\mathbf{M}$ over $T$ such that the following linear equivalence relation, known as the {\it canonical bundle formula}, holds
\begin{equation}
    \label{cbf.eqn}
    K_X+B \sim_{\mathbb{Q}} f^\ast(K_T+\mathbf{B}_{T}+\mathbf{M}_{T}).
\end{equation}
The b-divisor $\mathbf{B}$ is often called the \emph{boundary part} in the canonical bundle formula; it is a canonically defined b-divisor.
Furthermore, if $B$ is effective, then so is $\mathbf{B}_T$.
The b-divisor $\mathbf{M}$, in turn, is often called the \emph{moduli part} in the canonical bundle formula, and it is in general defined only up to $\mathbb{Q}$-linear equivalence. 
The linear equivalence \eqref{cbf.eqn} holds at the level of b-divisor: namely, 
\[
\overline{(K_X+B)} \sim_\qq f^*(\mathbf{K}+\mathbf{B}+\mathbf{M}),
\] 
where $\mathbf{K}$ denotes the canonical b-divisor of $T$.
Let $I$ be a positive integer such that $I(\K X. + B) \sim 0$ along the generic fiber of $f$.
Then, by \cite{PS09}*{Construction 7.5}, we may choose $\mathbf M$ in its $\qq$-linear equivalence class such that
\begin{equation} \label{equation coeffs M}
I\overline{(K_X+B)} \sim If^*(\mathbf{K}+\mathbf{B}+\mathbf{M}).
\end{equation} 

The moduli b-divisor $\mathbf{M}$ is expected to detect the variation of the fibers of the morphism $f$.
In this direction, we have the following statement.

\begin{theorem}
\cite{FG14}*{Theorem 3.6} 
\label{classic cbf}
Let $f \colon (X,B) \rar T$ be an lc-trivial fibration and let $\pi \colon T \rar S$ be a projective morphism.
Let $\mathbf{B}$ and $\mathbf{M}$ be the b-divisors that give the boundary and the moduli part, respectively.
Then, $\mathbf K + \mathbf B$ and $\mathbf M$ are $\qq$-b-Cartier b-divisors.
Furthermore, $\mathbf{M}$ is b-nef over $S$.
\end{theorem}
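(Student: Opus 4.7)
The plan is to exhibit a single birational model $T' \to T$ on which both b-divisors descend and on which $\mathbf{M}_{T'}$ is nef over $S$; by the projection formula, descent then propagates to all higher models and nefness to the b-divisor.

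First I would set up a nice simultaneous resolution. Take a log resolution $\mu \colon X' \to X$ of $(X,B)$ and, using Hironaka on the discriminant locus of $f \circ \mu$, a blow-up $T' \to T$ such that $f$ lifts to a morphism $f' \colon X' \to T'$ with the following properties: the $f'$-horizontal part of $B' \coloneqq \mu^*(K_X+B)-K_{X'}$ has relative simple normal crossings over the generic point of every codimension-one point of $T'$, and the codimension-one degeneration locus of $f'$ is a simple normal crossings divisor on $T'$. This is possible by \cite[Theorem 0.2]{Kol13}-style semistable reduction in codimension one together with further blow-ups. Having fixed such a model, one checks that for every prime divisor $D \subset T'$ the coefficient $b_D$ of $\mathbf{B}_{T'}$ along $D$, defined via the log canonical threshold
\[
b_D \coloneqq 1 - \sup\{\, t \in \qq \mid (X', B' + t f'^*D) \text{ is sub-log canonical over the generic point of } D \,\},
\]
is invariant under further blow-ups of $T'$ above $D$, because any such blow-up only adds components whose $f'$-pullback has normal-crossing support with $B'$. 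This gives descent of $\mathbf{B}$ to $T'$, so $\mathbf{K} + \mathbf{B}$ is b-$\qq$-Cartier.

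Second, the moduli part is then forced: by condition (iii) in the definition of lc-trivial fibration we may write $K_{X'}+B' \sim_{\qq} f'^*L_{T'}$ for some $\qq$-Cartier divisor $L_{T'}$, and setting $\mathbf{M}_{T'} \coloneqq L_{T'} - K_{T'} - \mathbf{B}_{T'}$ gives a $\qq$-Cartier divisor whose associated b-divisor agrees with $\mathbf{M}$ by the rank-one condition (ii). Compatibility of this construction with further blow-ups of $T'$ is again formal given that $\mathbf{B}$ has already been shown to descend, so $\mathbf{M}$ is b-$\qq$-Cartier. The identity \eqref{equation coeffs M} is preserved in the process, ensuring that the index $I$ controls the moduli part as well.

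The main obstacle, and the deepest input, is the nefness of $\mathbf{M}_{T'}$ over $S$. Here I would follow the Ambro--Kawamata--Fujino--Gongyo strategy: after a generically finite base change $T'' \to T'$ constructed via Kawamata's covering trick to kill the unipotent monodromies of the variation of Hodge structure on the smooth locus of $f'$, the pulled-back moduli part identifies, up to an effective divisor supported on the ramification locus, with the first Chern class of the bottom piece of the Hodge filtration on the middle cohomology of the fibers of a semistable model. Fujita--Kawamata semipositivity applied to this Hodge bundle (see \cite[Theorem 3.6]{FG14} for this precise incarnation) yields nefness over $S$ of the pullback, and descent along the generically finite cover gives nefness of $\mathbf{M}_{T'}$ itself over $S$. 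The hard part is therefore entirely Hodge-theoretic; the b-divisor formalism merely packages the birational behavior of the canonical bundle formula once the positivity statement is in place.
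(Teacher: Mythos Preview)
The paper does not supply its own proof of this theorem: it is stated in the preliminaries with the citation \cite[cf.~Theorem~3.6]{FG14} and used as a black box thereafter. So there is no argument in the paper to compare your proposal against.

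That said, your sketch is a faithful outline of the standard proof due to Ambro, Kawamata, and Fujino--Gongyo. The reduction to a model $T'$ on which the discriminant is simple normal crossings, the verification that the lct-defined coefficients of $\mathbf{B}$ stabilize on such a model, and the Hodge-theoretic input (Kawamata's covering trick plus Fujita--Kawamata semipositivity of the lowest Hodge piece) are exactly the ingredients. A couple of points could be tightened: the descent of $\mathbf{B}$ requires not just that the degeneration locus be snc on $T'$ but that the family be, after base change, equisingular in codimension one in a suitable sense (this is where Ambro's argument actually does work); and the passage from nefness of the pullback along the covering $T'' \to T'$ back to nefness on $T'$ uses that $\mathbf{M}$ is compatible with base change, which is itself a nontrivial lemma. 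But as a roadmap your proposal is correct and matches the literature the paper is citing.
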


\begin{remark}
In the setup of Theorem \ref{classic cbf}, let $T'$ be a model where the nef part $\mathbf M$ descends in the sense of b-divisors.
Then, $\mathbf M \subs T'.$ is nef over $S$. 
In particular, $(T, \bB T.,\mathbf{M})/S$ is a generalized sub-pair.
\end{remark}

\subsection{Fano-type pairs}
Let $(X,B)$ be a log canonical pair, and let $f \colon X \rar T$ be a contraction.
We say that $(X,B)$ is \emph{log Fano over $T$} if $-(\K X. + B)$ is ample over $T$.
If $-(\K X. + B)$ is nef and big over $T$, we say that $(X,B)$ is \emph{weak log Fano over $T$}.
If $B=0$, we say that $X$ is Fano (resp. weak Fano) over $T$.
If $T = \Spec (k)$, we omit it from the notation.
Finally, we say that $X$ is of \emph{Fano-type over $T$} if there exists a boundary $B$ such that $(X,B)$ is klt weak log Fano over $T$.

\subsection{Complements} \label{subsection_complements}
Let $(X,B)$ be a log canonical pair, $X\rightarrow T$ a contraction, and $n$ a positive integer.
We say that the divisor $B^+$ is a \emph{$\qq$-complement} over $t \in T$ if the following conditions hold over some neighborhood of $t \in T$:
\begin{itemize}
\item[(i)] $(X,B^{+})$ is a log canonical pair;
\item[(ii)] $K_X+B^{+} \sim_\qq 0$ over $t \in T$; and 
\item[(iii)] $B^+ \geq B$.
\end{itemize}
Furthermore, we say that $B^+$ is an \emph{$n$-complement} for $(X,B)$ over $t \in T$ if the following stronger version of condition (ii) holds:
\begin{itemize}
    \item [(ii)$'$] $n(K_X+B^{+}) \sim 0$ over $t \in T$.
\end{itemize}
In particular, if $B^+$ is an $n$-complement, $nB$ is an integral Weil divisor.

\begin{remark} \label{rmk_monotonic}
Notice that more general complements, where the above condition (iii) is weakened, are used in the literature.
See for example \cite{Bir16a}*{2.18}.
Since in this work condition (iii) is always satisfied, we decided to use this stronger definition of complement, in order to avoid redundant terminology and notation.
In case we need to reference works where the broader notion of complement is used, we will address why we can guarantee that the reference can be applied to obtain a complement in the sense of the above definition.
\end{remark}

Following the work of Birkar \cite{Bir16a}, we can extend the notion of complement to generalized pairs.
Let $(X,B,\mathbf M)/Z$ be a generalized log canonical pair, $X\rightarrow T$ a contraction over $Z$, and $n$ a positive integer.
We say that the divisor $B^+$ is a \emph{$\qq$-complement} over $t \in T$ if the following conditions hold over some neighborhood of $t \in T$:
\begin{itemize}
\item[(i)] $(X,B^{+},\mathbf M)$ is a generalized log canonical pair;
\item[(ii)] $K_X+B^{+} + \mathbf M _X \sim_\qq 0$ over $t \in T$; and 
\item[(iii)] $B^+ \geq B$.
\end{itemize}
As above, we say that $B^+$ is an \emph{$n$-complement} for $(X,B,\mathbf M)/Z$ over $t \in T$ if the following stronger version of condition (ii) holds:
\begin{itemize}
    \item [(ii)$'$] $n(K_X+B^{+} + \mathbf M _X) \sim 0$ over $t \in T$.
\end{itemize}
In particular, if $B^+$ is an $n$-complement, and $n \mathbf M$ is an integral b-divisor, then $nB$ is an integral Weil divisor.

The following is the relative version of \cite{Bir16a}*{6.1.(2)} and its proof is identical to the one of \cite{Bir16a}*{6.1.(2)}.

\begin{lemma}[cf. \cite{Bir16a}*{6.1.(2)}] \label{lemma birkar}
Let $(X,B, \mathbf M)/Z$ be a generalized pair, and let $\phi \colon X \drar X'$ be a birational map over $Z$ to a normal variety $X'$, projective over $Z$.
Let $X''$ be a resolution of $\phi$ where $\mathbf M$ descends.
Let $f \colon X'' \rar X$ and $g \colon X'' \rar X'$ be the corresponding morphisms.
Assume that there exist effective divisors $B'$ and $P''$ on $X'$ and $X''$, respectively, such that
\[
f^*(\K X. + B + \mathbf M _X) + P'' = g^*(\K X'. + B' + \mathbf M \subs X'.).
\]
Then, if $(X',B',\mathbf M)/Z$ has an $n$-complement, then so does $(X,B,\mathbf M)/Z$.
\end{lemma}

As special cases of Lemma \ref{lemma birkar}, we have the following immediate consequences.

\begin{lemma} \label{reduction complements}
Let $(X,B,\mathbf M)/Z$ be a generalized pair and let $(Y,\Delta)$ be a log canonical pair.
Then, the following facts hold:
\begin{itemize}
    \item[(1)] let $D \geq 0$ be a $\qq$-Cartier divisor on $X$.
    Then, an $n$-complement for $(X,B+D,\mathbf M)/Z$ is also an $n$-complement for $(X,B,\mathbf M)/Z$;
    \item[(2)] let $(Y',\Delta')$ be a dlt model for $(Y,\Delta)$.
    Then, if $(Y',\Delta')$ has an $n$-complement, then so does $(Y,\Delta)$;
    \item[(3)] let $X \drar X'$ be a partial MMP for $-(\K X. + B + \mathbf M _X)$ over $Z$.
    Let $B'$ denote the push-forward of $B$ on $X'$.
    Then, if $(X',B',\mathbf M)/Z$ has an $n$-complement, then so does $(X,B,\mathbf M)/Z$; and
    \item[(4)] let $X \drar X'$ be a sequence of rational birational contractions over $Z$ that are trivial for $\K X. + B + \mathbf M _X$.
    Let $B'$ denote the push-forward of $B$ on $X'$.
    Then, if $(X',B',\mathbf M)/Z$ has an $n$-complement, then so does $(X,B,\mathbf M)/Z$.
\end{itemize}
\end{lemma}

\begin{proof}
To avoid confusion with the notation, we proceed by cases.
\begin{itemize}
\item[(1)] This case follows immediately, as if $B^+ \geq B+D$, then $B^+ \geq B$, as we assumed $D \geq 0$.

\item[(2)] This case follows from Lemma \ref{lemma birkar}.
With the notation of Lemma \ref{lemma birkar}, we have $(X,B,\bM.)=(Y,\Delta)$, $X'=X''=Y'$, $B'=\Delta'$ and $P''=0$.

\item[(3)] This is the relative version of \cite{Bir16a}*{6.1.(3)} and its proof is identical to the absolute case.

\item[(4)] Let $X''$ be a common resolution of $X$ and $X'$, and let $f \colon X'' \rar X$ and $g \colon X'' \rar X'$ denote the corresponding morphisms.
Then, as $X \drar X'$ is $(K_X + B + \bM X.)$-trivial, we have
\[
f^*(\K X. + B + \mathbf M _X) = g^*(\K X'. + B' + \mathbf M \subs X'.).
\]
In particular, this case follows from Lemma \ref{lemma birkar} with $P''=0$.
\end{itemize}
\end{proof}

\begin{lemma} \label{rmk generic point}
Assume the setup of Theorem \ref{main-theorem} (resp. Theorem \ref{main-theorem-hyper}), where $t=\eta_T$ is the generic point of the base $T$.
Also, assume that Theorem \ref{main-theorem} (resp. Theorem \ref{main-theorem-hyper}) holds over closed points.
Then, Theorem \ref{main-theorem} (resp. Theorem \ref{main-theorem-hyper}) holds over $\eta_T$.
\end{lemma}

\begin{proof}
Assume the setup of Theorem \ref{main-theorem} (resp. Theorem \ref{main-theorem-hyper}) over the generic point $\eta_T$ of $T$.
Then, by assumption, $(X,B)$ is $\qq$-complemented over $\eta_T$.
That is, there exists $B' \geq 0$ such that, shrinking around $\eta_T \in T$, $K_X + B + B' \sim_\qq 0$.
Hence, there exist a non-empty open subset $U \subset T$ such that $K_X+B+B'$ is $\qq$-linearly trivial along $X \times_T U$.
In particular, the assumptions of Theorem \ref{main-theorem} (resp. Theorem \ref{main-theorem-hyper}) are satisfied for every $u \in U$.
Now, pick a closed point $u \in U$.
By assumptions, the conclusions of Theorem \ref{main-theorem} (resp. Theorem \ref{main-theorem-hyper}) hold for $u$.
But then, as $K_X+B$ has an $n$-complement over a neighborhood of $u \in T$, the same open set is a neighborhood of $\eta_T$, and the claim follows.
\end{proof}

\begin{lemma} \label{rmk lcc}
Assume that Theorem \ref{main-theorem} (resp. Theorem \ref{main-theorem-hyper}) holds under the additional assumption that there is a log canonical place of $(X,B)$ whose center on $T$ is $t$.
Then Theorem \ref{main-theorem} (resp. Theorem \ref{main-theorem-hyper}) holds.
\end{lemma}

\begin{proof}
Let $B' \geq 0$ be a $\qq$-complement for $(X,B)$ over $t \in T$.
By part (2) of Lemma \ref{reduction complements}, up to taking a dlt model for $(X,B+B')$, we may assume that $X$ is $\qq$-factorial and $(X,B+B')$ is dlt.
By Lemma \ref{rmk generic point}, we may assume that $t$ is not the generic point of $T$.
Since $T$ is quasi-projective, we may find a sufficiently ample Cartier divisor $D$ such that $t \in \Supp(D)$;
furthermore, up to shrinking $T$ around $t$, we may assume that $D$ is prime.
Set $c \coloneqq \mathrm{lct} \subs t.(X,B+B';f^*D)$, where by $\mathrm{lct} \subs t.$ we mean the log canonical threshold over the point $t$.
In particular, up to shrinking $T$ around $t$, $(X,B+B'+cf^*D)$ is strictly log canonical and has a log canonical place $E$ whose center in $T$ contains $t$.
If $c=1$, we have $E=f^*D$, and we replace $B$ by $B+f^*D$.
If $c <1$, let $\pi \colon X' \rar X$ be a dlt model for $(X,B+B'+cf^*D)$ where $E$ appears as a divisor.
Let $F'$ denote the reduced exceptional divisor of $\pi$.
Then, $(X',\pi \sups -1._*B + F')$ satisfies the assumptions of Theorem \ref{main-theorem} (resp. Theorem \ref{main-theorem-hyper}), with $\qq$-complement $\pi^*B'+\pi \sups -1._* (f^*D)$.
Furthermore, by Lemma \ref{lemma birkar}, the push-forward of an $n$-complement for $(X',\pi \sups -1._*B + F')$ provides an $n$-complement for $(X,B)$.
Therefore, we may replace $(X,B)$ with $(X',\pi \sups -1._*B + F')$ and assume that there is a prime component $P$ of $\lfloor B \rfloor$ such that $t \in f(P) \subsetneq T$.
If $t$ is the generic point of $f(P)$, we stop.
Otherwise, we repeat the above strategy picking a prime Cartier divisor $D$ on $T$ such that $t \in D$ and $f(P) \not \subset D$.
Since $\dim T \leq 3$, after finitely many iterations of this algorithm, we obtain the claimed reduction.
\end{proof}

\section{Examples}\label{sec:examples}

In this section, we give examples showing that the hypotheses of Theorem~\ref{main-theorem} are optimal.
In order, we provide the following examples:
\begin{itemize}
    \item Example \ref{not_q_comp} shows that Theorem \ref{main-theorem} fails if we weaken the assumption that $(X,B)$ is $\qq$-complemented.
    Indeed, we provide an example of a dlt 3-fold $(X,B)$ such that $-(K_X+B) \sim D \geq 0$ for some $D \geq 0$, but there is no $0\leq \Gamma \sim_\qq -(K_X+B)$ such that $(X,B+\Gamma)$ is log canonical;
    \item Example \ref{example_dcc} shows that Theorem \ref{main-theorem} fails if the set of coefficients $\Lambda$ does not satisfy the DCC property; and
    \item Example \ref{example_accumulation} shows that Theorem \ref{main-theorem} fails if the set of coefficients $\Lambda$ is not rational with rational accumulation points.
\end{itemize}
These examples are known to the experts, 
but we include them for the sake of completeness.

First, we show an example for which $-(K_X+B)$ is effective, but no divisor $0 \leq \Gamma \sim_\qq -(K_X+B)$ satisfies the condition that $(X,B+\Gamma)$ is log canonical.

\begin{example} \label{not_q_comp}
Let $X$ be the blow-up of $\pp^2$ at a point $p$.
Let $E$ be the exceptional divisor.
Let $H_1,H_2$ and $H_3$ be three lines on $\pp^2$ passing through $p$ with different tangent directions.
Let $L_1,L_2$ and $L_3$ be the strict transforms
of $H_1,H_2$ and $H_3$, respectively.
Observe that $L_1,L_2$ and $L_3$ are disjoint.
Hence, the pair $(X,L_1+L_2+L_3)$ is log canonical.
However, $-(K_X+L_1+L_2+L_3)\sim 2E$, and this divisor generates the ring of sections $\bigoplus \subs k \geq 0. H^0(X,\O X. (2kE))$.
Thus, the only effective divisor $\Gamma$ for which
$K_X+L_1+L_2+L_3+\Gamma \sim_\qq 0$ is $2E$,
and the pair $(X,L_1+L_2+L_3+2E)$ is not log canonical.
\end{example}

It is well-known that in order to find $n$-complements,
we need to impose some condition on the set of coefficients $\Lambda$.
First, $\Lambda$ has to satisfy the descending chain condition.

\begin{example} \label{example_dcc}
Consider the sequence of boundaries $B_i=\sum_{j=1}^{2k-1} \frac{1}{k} p_j$ on $\pp^1$,
where for each $i$, the points $p_1,\dots, p_i$ are 
distinct.
Then, the linear system $|-m(K_{\pp^1}+B_i)|$
is empty for $m\in \{1,\dots, k-1\}$.
Hence, there is no bounded $n$-complement for the sequence
of pairs $(\pp^1,B_i)$.
\end{example}

We recall an example in~\cite{FM18}, 
which shows that the statement of Theorem~\ref{main-theorem} does not hold
if the accumulation points of $\Lambda$ are not rational.
This already happens in the Fano case.
For more considerations on the conditions that $\Lambda$ has to satisfy see~\cite{FM18}*{\S2.6}.

\begin{example} \label{example_accumulation}
Let $( a_i ) \subs i \geq 1.$ be an increasing sequence of rational numbers converging to $ {1}/{\sqrt{2}}$.
Similarly, let $( b_i ) \subs i \geq 1.$ be an increasing sequence of rational numbers converging to $1-1/\sqrt{2}$.
Define $\Lambda \coloneqq \lbrace a_i \rbrace \subs i \geq 1. \cup \lbrace b_i \rbrace \subs i \geq 1.$. Note that $\Lambda$ satisfies the DCC, however its accumulation points are not rational.

Let $p_0,p_1,p_2,p_3$ be four distinct points of $\pp^1$.
Let $B_i = a_i p_0+a_i p_1+b_i p_2+b_i p_3$ be a sequence
of boundaries on $\pp^1$.
Observe that $(\pp^1,B_i)$ is klt
and $-(K_{\pp^1}+B_i)$ is ample.
We show that for no fixed $n$, all the pairs
$(\pp^1,B_i)$ admit an $n$-complement.

Fix a positive integer $n$.
Then, for $i$ large enough, we have $\frac{\lceil n a_i \rceil}{n} > \frac{\sqrt{2}}{2}$. Similarly, we have $\frac{\lceil n b_i \rceil}{n} > 1- \frac{\sqrt{2}}{2}$.
Therefore, there exists no $0 \leq \Gamma \sim_\qq (K_{\pp^1}+B_i)$ such that $n(K_{\pp^1}+B_i+\Gamma)$ is integral and $\deg (B_i+\Gamma) =2$. In particular, there exists no $n$-complement for $(\pr 1., B_i)$.
\end{example}

\section{Complements for surfaces}\label{log-canonical-surface-complements}

In this section, we prove the statement of Theorem~\ref{main-theorem-hyper} for surfaces.
In particular, we prove the following statement.

\begin{theorem}\label{thm:log-canonical-surface-complements}
Let $\mathcal{R} \subset [0,1]$ be a finite set of rational numbers.
There exists a natural number $n$ only depending on $\mathcal{R}$ that satisfies the following.
Let $X\rightarrow T$ be a contraction between normal quasi-projective varieties 
such that the log canonical surface $(X,B)$ is $\qq$-complemented over $t\in T$ and the coefficients of $B$ belong to $\Phi(\mathcal{R})$. 
Then up to shrinking $T$ around $t$ we can find
\[
\Gamma \sim_{T} -n(K_X+B)
\]
such that $(X,B+\Gamma/n)$ is a log canonical pair.
\end{theorem}

\begin{remark}
    By Lemma \ref{coefficients-perturbation}, one can recover a version of Theorem \ref{thm:log-canonical-surface-complements} where the set of coefficients is a DCC set $\Lambda \subset \qq$ with rational  accumulation points.
    On the other hand, Theorem \ref{thm:log-canonical-surface-complements} is sufficient for the structure of the proof of Theorem \ref{main-theorem}.
\end{remark}

The theory of complements for surfaces has been developed by Shokurov and Prokhorov \cites{Sho97,Pro99}.
On the other hand, some results are phrased only for the set of coefficients $\Phi(\lbrace 0,1 \rbrace)$ or they are phrased for complements that do not satisfy condition (iii) in \S\ref{subsection_complements}, cf. Remark \ref{rmk_monotonic}.
For these reasons, we perform some reductions to the known cases \cites{Bir16a,Pro99,Sho97} to extend the classic results to the setup of this paper.

\subsection{The log Calabi--Yau case}
As a first reduction, we focus on the case when the pair $(X,B)$ is of log Calabi--Yau type over the base of the contraction.
This is an important case of Theorem \ref{thm:log-canonical-surface-complements} since we can reduce more general situations to this one.

\begin{proposition} \label{prop logcy surface complement}
Theorem \ref{thm:log-canonical-surface-complements} holds true if $\K X. + B \sim_\qq 0 /T$ over some neighborhood of $t \in T$.
\end{proposition}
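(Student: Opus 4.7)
The plan is to split into cases according to the dimension $d$ of $f(X)$ near $t$, which, since $\dim X=2$, equals $0$, $1$, or $2$. Preliminarily, by Remark~\ref{rmk generic point} I assume $t$ is a closed point, and by Remark~\ref{reduction complements}(2) I replace $(X,B)$ with a $\qq$-factorial dlt modification (as provided by Lemma~\ref{lemma dlt model}); this does not affect the existence of $n$-complements and allows me to apply the MMP freely.

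In the case $d=0$, after shrinking $T$ around $t$, the variety $X$ is projective, so $(X,B)$ is a projective log canonical surface pair with $\K X.+B\sim_\qq 0$ and coefficients in $\Phi(\mathcal R)$. The desired bound is the classical index theorem for projective log canonical Calabi--Yau surfaces with hyperstandard coefficients (cf.~\cites{Sho97,Pro99,PS09}), which yields an integer $n=n(\mathcal R)$ with $n(\K X.+B)\sim 0$; hence $\Gamma=0$ is an $n$-complement. This is the key input, and the other two cases reduce to it.

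In the case $d=1$, I shrink $T$ around $t$ so that $T$ is a smooth affine curve germ at $t$. The contraction $f\colon (X,B)\to T$ is an lc-trivial fibration, and the canonical bundle formula gives
\[
\K X.+B \sim_\qq f^*(\K T. + \bB T. + \mathbf M_T),
\]
where $(T,\bB T.,\mathbf M)$ is a generalized log canonical pair. The general fiber of $f$ is a log canonical Calabi--Yau curve pair with coefficients in $\Phi(\mathcal R)$, whose Cartier index is bounded in terms of $\mathcal R$ by Shokurov's classical theorem on curve complements. The standard analysis of coefficients in the canonical bundle formula then shows that the coefficients of $\bB T.$ lie in $\Phi(\mathcal R')$ for some $\mathcal R'=\mathcal R'(\mathcal R)$ and $I\mathbf M$ is b-Cartier for some $I=I(\mathcal R)$. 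On a smooth curve germ, a $\qq$-Cartier divisor that is $\qq$-linearly trivial at $t$ becomes linearly trivial after shrinking, so there is $n=n(\mathcal R)$ with $n(\K T.+\bB T.+\mathbf M_T)\sim 0$ near $t$; pulling back via $f$ produces the required $n$-complement with $\Gamma=0$.

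In the case $d=2$, after shrinking $T$ the morphism $f\colon X\to T$ is birational onto its image, which I identify with $T$. The negativity lemma applied to $\K X.+B\sim_{\qq,T}0$ yields $\K X.+B=f^*(\K T.+f_*B)$, and $(T,f_*B)$ is log canonical with coefficients in $\Phi(\mathcal R)$, with $\K T.+f_*B\sim_\qq 0$ near $t$. It therefore suffices to bound the Cartier index of $\K T.+f_*B$ at a log canonical surface germ with hyperstandard boundary, which follows from the classification of log canonical surface singularities and the corresponding local index bounds (cf.~\cites{Sho97,Pro99}); pulling back the trivial complement finishes this case. The main obstacles are thus importing the global index bound for projective log Calabi--Yau surfaces in case $d=0$, and controlling the denominator of the moduli b-divisor $\mathbf M$ in case $d=1$, both of which rest ultimately on known surface and curve complement theory.
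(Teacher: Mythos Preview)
Your case split mirrors the paper's, but both your case $d=2$ and your case $d=1$ contain a genuine error: you are claiming a bounded \emph{Cartier index} (equivalently, a trivial complement $\Gamma=0$) where only a bounded \emph{complement} exists.

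In case $d=2$, you assert that the Cartier index of $K_T+f_*B$ at a log canonical surface germ with hyperstandard boundary is bounded by the classification. This is false already for klt germs: the cyclic quotient singularity $\frac{1}{m}(1,1)$ with $B=0$ has $K_T$ of Cartier index roughly $m$, which is unbounded, yet the coefficients of $B$ (there are none) lie in any $\Phi(\mathcal R)$. What \emph{is} bounded is a complement: for instance $K_T+L_1+L_2\sim 0$ for the two toric rulings, a $1$-complement. The paper handles this by splitting off the klt case (where $X$ is of Fano type over $T$, so \cite[Theorem~1.8]{Bir16a} applies and produces a nontrivial complement) from the non-klt case (where one lifts a complement from the curve $\lfloor B\rfloor$ via \cite[Proposition~4.4.3]{Pro99} and \cite[Theorem~1.5]{Xu19a}). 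Your reduction to a local index bound throws away the possibility of adding boundary, which is exactly what is needed.

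The same mistake recurs in case $d=1$. Even granting an effective canonical bundle formula, you only get $\mathrm{coeff}(B_T)\in\Phi(\mathcal R')$, and $\Phi(\mathcal R')$ has \emph{unbounded} denominators (the elements $1-\tfrac{r}{m}$ with $m\to\infty$). Thus there is no bounded $n$ making $n(K_T+B_T+\bM T.)$ integral at $t$, so $\Gamma=0$ cannot work in general; a concrete example is a $\mathbb P^1$-bundle blown up so that the central fibre acquires a component of multiplicity $m$, giving $\mathrm{coeff}_t(B_T)=1-\tfrac{1}{m}$. Moreover, within the paper's logic the effective canonical bundle formula you invoke is Theorem~\ref{effective cbf}, whose proof (Step~5) relies on Proposition~\ref{full case rel dim 1}, which in turn uses the present proposition---so your argument is circular as written. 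The paper instead treats the curve base directly: for $B^h\neq 0$ it runs a $K_X$-MMP to a Mori fibre space and invokes \cite[Theorem~1.8]{Bir16a}; for $B^h=0$ it reduces via Kodaira's classification of singular elliptic fibres to \cite[Theorem~3.1]{Sho97}. Your case $d=0$ is fine.
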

\begin{proof}
By part (2) of Lemma \ref{reduction complements}, up to taking a dlt model of $(X,B)$, we may assume that $(X,B)$ is $\qq$-factorial dlt.
Then, we subdivide the proof by cases, depending on $\dim T$.

{\bf Case 1:} Assume that $X \rar T$ is a birational morphism.
By \cite{Bir16a}*{Theorem 1.8}, we may assume that $\lfloor B \rfloor \neq 0$.
Indeed, if $\lfloor B \rfloor = 0$, as $(X,B)$ is dlt, it follows that $(X,B)$ is klt;
then, as $X \rar T$ is birational, $(X,B)$ is klt, and $K_X + B \sim_\qq 0/T$, it follows that $X$ is of Fano type over $T$, and thus \cite{Bir16a}*{Theorem 1.8} can be applied.
Thus, in treating Case 1, we may assume $\lfloor B \rfloor \neq 0$.
If $t \in T$ is not a closed point, we may assume that $X=T$ is a smooth surface. Therefore, we reduce to \cite{Bir16a}*{Theorem 1.8}.
Hence, we may assume that $t \in T$ is a closed point.
By \cite{Pro99}*{Proposition 4.4.3}, it suffices to show that the semi-log canonical pair $(\lfloor B \rfloor,\Diff \subs \lfloor B \rfloor.(B))$ has an $n$-semi-complement for a bounded $n$ (see \cite{Pro99}*{Definition 4.1.4}).
By \cite{Bir16a}*{Lemma 3.3}, there exists a finite set $\mathcal{S} \subset [0,1]$ only depending on $\mathcal{R}$ such that the coefficients of $\Diff \subs \lfloor B \rfloor.(B)$ belong to $\Phi(\mathcal{S})$.
Thus, by \cite{thesis}*{Theorem 1.8.7}, there exists $n$ depending only on the hypotheses of Theorem \ref{thm:log-canonical-surface-complements} such that $n(\K \lfloor B \rfloor. + \Diff \subs \lfloor B \rfloor.(B)) \sim 0$.
Thus, the birational case is settled.
Notice that \cite{Pro99} uses a weaker notion of (semi-)complement, where $B^+ \geq B$ may not hold.
On the other hand, as the complement produced by \cite{thesis}*{Theorem 1.8.7} satisfies this type of inequality and \cite{Pro99}*{Proposition 4.4.3} is used to lift this complement, we have $B^+ \geq B$.

{\bf Case 2:} Assume that $T$ is a curve.
In particular, the general fiber is either $\pr 1.$ or an elliptic curve.
The two cases correspond to $B^h \neq 0$ or $B^h = 0$.
First, assume that $B^h \neq 0$.
Then, by part (4) of Lemma \ref{reduction complements}, we may run a $\K X.$-MMP over $T$, as it is trivial for $K_X+B$.
This terminates with a Mori fiber space $\hat X \rar T$.
Let $\hat B$ denote the push-forward of $B$ to $\hat X$.
Then, it suffices to show the statement for $(\hat X,\hat B)$, and the latter follows by \cite{Bir16a}*{Theorem 1.8}.
Therefore, we may assume that $B^h = 0$.
Then, by Lemma \ref{rmk generic point} and \ref{rmk lcc}, we may assume that $(X,B)$ has a log canonical center dominating the closed point $t \in T$.
As $(X,B)$ is dlt, some irreducible component of the fiber $X_t$ has coefficient 1 in $B$; call this component $F$.
Since $X \rar T$ is an elliptic fibration and $K_X + B$ is trivial over $T$, we may contract all the components of $X_t$ but $F$.
Indeed, if the fiber is not irreducible, all the irreducible components of the fiber as rational curves with negative self-intersection.
By part (4) of Lemma \ref{reduction complements}, this is a legitimate reduction.
Then, we conclude by \cite{Sho97}*{Theorem 3.1}.
Notice that, in \cite{Sho97}, a weaker notion of complement where $B^+ \geq B$ may not hold is considered; on the other hand, as $B$ is reduced over $t \in T$, the notion in use in this paper and the notion in \cites{Sho97} agree and we can freely use \cite{Sho97}*{Theorem 3.1}.

{\bf Case 3:} Assume that $T= \Spec(k)$.
Then, this is the content of \cite{thesis}*{Theorem 1.8.7}.
\end{proof}

\subsection{An effective canonical bundle formula for fibrations in curves}
To construct complements for a pair $(X,B)$ that is relatively log Calabi--Yau over a base $T$, it is sometimes useful to decompose the structure morphism $X \rar T$ as a composition $X \rar S \rar T$.
This strategy allows for an inductive approach to the problem.
For this strategy to be successful, we need to be able to construct on $S$ a new pair $(S,B_S)$ such that the coefficients of $B_S$ are under control.
In order to proceed, we need to prove Theorem \ref{thm:log-canonical-surface-complements} in full generality for morphisms of relative dimension 1.

\begin{proposition} \label{full case rel dim 1}
Theorem \ref{thm:log-canonical-surface-complements} holds true if $\dim T =1$.
\end{proposition}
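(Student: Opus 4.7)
The plan is to follow the strategy sketched in Section \ref{sketch}, specialized to the surface-over-curve setting. The goal is to reduce to two known or tractable sub-cases: a relatively generalized log Fano surface (handled by Birkar's boundedness of complements for Fano-type morphisms \cite[Theorem 1.10]{Bir16a}) and a one-dimensional complement problem on a curve germ (an elementary computation with bounded denominators).

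First I would pick a $\qq$-complement $B'$ of $(X,B)$ over $t$ and pass to a $\qq$-factorial dlt modification $\pi \colon Y \to X$ of $(X, B+B')$, writing
\[
\pi^*(K_X + B + B') = K_Y + B_Y + B'_Y + E,
\]
where $E$ is the reduced divisor of all log canonical places of $(X,B+B')$ and $B_Y$, $B'_Y$ have supports disjoint from $E$. By Remark \ref{reduction complements}, it suffices to complement $(Y, B_Y + E)$, whose coefficients still lie in $\Phi(\mathcal{R}) \cup \{1\}$. For $0 < \epsilon \ll 1$, the pair $(Y, B_Y + (1+\epsilon)B'_Y + E)$ is dlt with $K_Y + B_Y + (1+\epsilon)B'_Y + E \sim_\qq \epsilon B'_Y$ effective over $T$. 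Surface MMP \cite{Sho96} terminates, producing a good minimal model $(Z, B_Z + (1+\epsilon)B'_Z + E_Z)/T$ and an ample model $\phi \colon Z \to Z_0$ over $T$. By Remark \ref{reduction complements} it now suffices to complement $(Z, B_Z + E_Z)$.

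Since $-(K_Z + B_Z + E_Z) \sim_\qq B'_Z$ is the pullback of a $\phi$-ample divisor, $(Z, B_Z + E_Z)$ is relatively log Calabi--Yau over $Z_0$, and the effective canonical bundle formula developed in the remainder of this subsection produces a generalized pair $(Z_0, B_{Z_0}, \mathbf{M})/T$ satisfying
\[
K_Z + B_Z + E_Z \sim_\qq \phi^*(K_{Z_0} + B_{Z_0} + \mathbf{M}_{Z_0}),
\]
with $B_{Z_0}$ having coefficients in a hyperstandard set $\Phi(\mathcal{S})$ depending only on $\mathcal{R}$ and with the Cartier index of $\mathbf{M}$ bounded only in terms of $\mathcal{R}$. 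Any $n$-complement of $(Z_0, B_{Z_0}, \mathbf{M})$ over $t$ will pull back to one of $(Z, B_Z + E_Z)$ over $t$, and thence to one of $(X,B)$.

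It remains to analyze the possibilities for $Z_0$. Since $\dim Z = 2$ and $Z_0 \to T$ factors through the curve $T$, we have $\dim Z_0 \in \{1, 2\}$. If $\dim Z_0 = 2$, then $\phi$ is birational and $-(K_{Z_0} + B_{Z_0} + \mathbf{M}_{Z_0})$ is ample over $T$, so $(Z_0, B_{Z_0}, \mathbf{M})$ is a generalized log Fano pair over the curve $T$, and \cite[Theorem 1.10]{Bir16a} supplies a bounded $n$-complement. If $\dim Z_0 = 1$, then $Z_0 \to T$ is a finite morphism of curves; after localizing around the finitely many preimages of $t$, complementing the generalized lc pair $(Z_0, B_{Z_0}, \mathbf{M})$ becomes a problem on a one-dimensional germ with bounded denominators, which is solved directly. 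The main obstacle is the effective canonical bundle formula above: one must control both the hyperstandard set $\mathcal{S}$ and the Cartier index of $\mathbf{M}$ purely in terms of $\mathcal{R}$, uniformly across all contractions $\phi$ arising from the MMP construction. For surface-to-curve fibrations this amounts to a Kodaira-type adjunction calculation on fibers (rational nodal curves or elliptic curves), but the bookkeeping required in the generalized pair framework is the delicate technical heart of the subsection.
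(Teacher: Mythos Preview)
Your overall architecture---dlt modification, MMP to a semi-ample model, then analyze the ample model $Z_0$---mirrors the paper's global strategy in \S\ref{sketch}, but there is a genuine circularity in your argument as applied to this specific proposition.

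You invoke ``the effective canonical bundle formula developed in the remainder of this subsection'' (i.e.\ Theorem~\ref{effective cbf}) to control the coefficients of $B_{Z_0}$ and the Cartier index of $\mathbf{M}$ in the case $\dim Z_0=1$. But the proof of Theorem~\ref{effective cbf} \emph{uses} Proposition~\ref{full case rel dim 1}: in Step~5 (the elliptic fibration case), the control on the coefficients of $B_T$ is obtained precisely by producing a bounded $n$-complement over each closed point of $T$ via Proposition~\ref{full case rel dim 1}. So your reduction to the curve-germ problem, in the case where $\phi\colon Z\to Z_0$ has elliptic general fiber, is circular. The parts of Theorem~\ref{effective cbf} that are independent of Proposition~\ref{full case rel dim 1} (the bound on $q$ from \cite[Theorem~8.1]{PS09}, and the $\pr 1.$-bundle case via \cite[Proposition~6.3]{Bir16a}) do not suffice.

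The paper avoids this circularity by arguing more directly. After the same dlt-model-plus-MMP reduction, it splits on whether the $\qq$-complement $B'$ is vertical or horizontal over $T$. If $B'$ is vertical, it is either a multiple of the fiber or of insufficient fiber type; either way one reduces to the already-established log Calabi--Yau case (Proposition~\ref{prop logcy surface complement}) without any canonical bundle formula. If $B'$ has a horizontal component, the general fiber is $\pr 1.$, and after the MMP $-(K_Z+B_Z)$ is ample over $T$; perturbing to $(Z,(1-\delta)B_Z)$ gives a klt pair that is Fano over $T$, so \cite[Theorem~1.8]{Bir16a} applies directly. This bypasses both the canonical bundle formula and the generalized-pair machinery entirely, which is why Proposition~\ref{full case rel dim 1} can then be used as an input to Theorem~\ref{effective cbf} rather than the other way around.
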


\begin{proof}
By assumption, there exists a $\qq$-divisor $B' \geq 0$ such that $\K X. + B + B' \sim_\qq 0 /T$ over a neighborhood of $t \in T$, which may be assumed to be a closed point of the curve $T$ by Lemma \ref{rmk generic point}.
By part (2) of Lemma \ref{reduction complements}, we may assume that $X$ is $\qq$-factorial.
If $\lfloor B' \rfloor \neq 0$, by part (1) of Lemma \ref{reduction complements}, we may add $\lfloor B' \rfloor$ to $B$ and replace $B'$ with $B'-\lfloor B' \rfloor$.
Thus, we may assume that $\lfloor B' \rfloor = 0$.
Furthermore, by Proposition \ref{prop logcy surface complement}, we may assume that $B' \neq 0$.
Let $\pi \colon Y \rar X$ be a $\qq$-factorial dlt model of $(X,B+B')$.
By construction, we may write
\[
\pi^*(\K X. + B + B') = \K Y. + \pi \sups -1._* B + \pi \sups -1. _* B' + E,
\]
where $E$ is a reduced divisor.
By parts (1) and (2) of Lemma \ref{reduction complements}, it suffices to find a bounded $n$-complement for $(Y,\pi \sups -1._* B + E)$.
Thus, up to relabelling $Y$ by $X$, we may assume that $(X,B+B')$ is $\qq$-factorial dlt, $B' \neq 0$, and $\lfloor B' \rfloor = 0$.
\newline
For $0 < \epsilon \ll 1$, the pair $(X,B + (1+\epsilon)B')$ is dlt.
Furthermore, we have $\epsilon B' \sim_\qq \K X. + B + (1+\epsilon)B'/T$.
Therefore, we may run a $B'$-MMP over $T$, which terminates with a good minimal model $Z \rar T$.
Let $B_Z$ and $B'_Z$ denote the push-forwards of $B$ and $B'$, respectively.
Since $B' \sim_\qq -(\K X. + B)$, by part (3) of Lemma \ref{reduction complements}, it suffices to produce a bounded $n$-complement for $(Z,B_Z)$ over $t \in T$.
\newline
Now, we distinguish two cases.

{\bf Case 1:} The divisor $B'$ is vertical over $T$.
\newline
By construction, $B'$ is supported on the fiber over $t$.
If it is a multiple of the fiber, we reduce to Proposition \ref{prop logcy surface complement}.
Therefore, we may assume that $B'$ is of insufficient fiber type.
Then, by \cite{Lai11}*{Lemma 2.10}, the MMP $X \rar Z$ contracts $B'$.
In particular, the pair $(Z,B_Z)$ satisfies the assumptions of Proposition \ref{prop logcy surface complement}.

{\bf Case 2:} The divisor $B'$ has a component that dominates $T$.
\newline
In this case, the general fiber is $\pr 1.$.
On the model $Z$, we have that $B'_Z$ is nef and big over $T$.
Therefore, we have that $-(\K Z. + B_Z)$ is nef and big over $T$.
Let $W$ be the relative ample model of $(Z,B_Z+(1+\epsilon)B'_Z)$, and let $\phi \colon Z \rar W$ be the corresponding morphism.
\newline
First, assume that $(W,B_W+(1+\epsilon)B'_W)$ is dlt, where $B_W$ and $B'_W$ denote the push-forwards of $B_Z$ and $B'_Z$, respectively.
Then, if $0 < \delta \ll 1$, $(W,(1-\delta)B_W)$ is klt and $-(\K W. + (1 -\delta)B_W)$ is ample over $T$.
Therefore, by \cite{Bir16a}*{Theorem 1.8}, $(W,B_W)$ admits a bounded complement.
As $\phi$ is $(\K Z. + B_Z)$-trivial, by part (4) of Lemma \ref{reduction complements}, this complement induces the sought complement for $(Z,B_Z)$.
\newline
Now, assume that $(W,B_W+(1+\epsilon)B'_W)$ is not dlt.
This means that $\phi$ contracts some divisor with coefficient 1 in $B_Z$.
Since $\phi$ is a morphism over $T$, such divisor has to be a component of the fiber over $t$.
Define $S \coloneqq \lfloor B_Z \rfloor$.
Then, $(S,\mathrm{Diff}_S(B_Z))$ is semi-dlt.
By \cite{thesis}*{Proposition 2.7.2}, there exists a bounded $n$-complement for $(S,\mathrm{Diff}_S(B_Z))$.
Then, the claim follows from \cite{Pro99}*{Proposition 4.4.3}.
Notice that \cite{Pro99} uses a weaker notion of (semi-)complement, where $B^+ \geq B$ may not hold.
On the other hand, as the complement produced by \cite{thesis}*{Theorem 1.8.7} satisfies this type of inequality and \cite{Pro99}*{Proposition 4.4.3} is used to lift this complement, we have $B^+ \geq B$.
\end{proof}

Now, we are ready to state an effective version of the canonical bundle formula for fibrations in curves.

\begin{theorem} \label{effective cbf}
Let $\mathcal{R} \subset [0,1]$ be a finite set of rational number, and let $(X,B)$ be a quasi-projective log canonical pair such that the coefficients of $B$ belong to $\Phi(\mathcal{R})$.
Let $f \colon X \rar T$ be a contraction to a normal quasi-projective variety $T$ with $\dim T = \dim X -1$ such that $\K X. + B \sim_\qq 0 /T$, and let $(T,B_T, \mathbf M)$ denote the generalized pair induced by the canonical bundle formula.
Then, there exists a finite set of rational numbers $\mathcal{S} \subset [0,1]$ such that the coefficients of $B_T$ belong to $\Phi(\mathcal{S})$.
Furthermore, there exists $q \in \nn$ only depending on $\mathcal R$ such that, if the b-divisor is $\bM.$ chosen as in \eqref{equation coeffs M}, $q \bM.$ is b-free and \[
q(\K X . + B) \sim q f^*(\K T. + B_T + \bM T.).
\]
Lastly, we may assume that $\frac{1}{q} \in \Phi(\mathcal{S})$.
\end{theorem}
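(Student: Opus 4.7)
The plan is to establish the two claims separately: the coefficients of $B_T$ lie in a hyperstandard set via a local lct computation, while the bound $q$ on the Cartier index of $\mathbf M$ follows from the effective canonical bundle formula for fibrations in curves together with Proposition \ref{full case rel dim 1}.

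I would first reduce to the $\qq$-factorial dlt case by taking a dlt modification $\pi\colon Y \rar X$ and setting $K_Y + B_Y = \pi^*(K_X + B)$: both contractions $X \rar T$ and $Y \rar T$ induce the same generalized pair $(T, B_T, \mathbf M)$, and the coefficients of $B_Y$ still lie in $\Phi(\mathcal R)$. For the coefficients of $B_T$, given a prime divisor $P \subset T$ with generic point $\eta_P$, I would localize and take a general curve section through $\eta_P$, reducing to the case where $T$ is a smooth curve and $P$ is a closed point. Writing $f^*P = \sum_i m_i F_i$ with $m_i \in \nn_{>0}$ and $\coeff_{F_i}(B) = 1 - r_i/n_i$ for $r_i \in \mathcal R$, $n_i \in \nn_{>0}$, a direct computation of the log canonical threshold of $f^*P$ with respect to $(X, B)$ over $\eta_P$ gives
\[
\gamma_P = \min_i \frac{1 - \coeff_{F_i}(B)}{m_i} = \min_i \frac{r_i}{n_i m_i},
\]
so $\coeff_P(B_T) = 1 - \gamma_P \in \Phi(\mathcal R)$, and one may take $\mathcal S = \mathcal R$.

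For the moduli part, I would restrict $(X, B)$ to the generic fiber $X_\eta$, obtaining a log Calabi--Yau pair on a smooth curve with coefficients in $\Phi(\mathcal R)$. By Proposition \ref{full case rel dim 1}, applied to the base change of $f$ along a general curve $C \subset T$ (or equivalently by the classical boundedness of complements in dimension one), there exists $I = I(\mathcal R)$ with $I(K_{X_\eta} + B_\eta) \sim 0$. Then \cite[Construction 7.5]{PS09} furnishes a choice of $\mathbf M$ in its $\qq$-linear equivalence class satisfying $I \overline{(K_X + B)} \sim If^*(\mathbf K + \mathbf B + \mathbf M)$. To conclude that $\mathbf M$ is integral on some birational model of $T$, I would invoke the effective canonical bundle formula for fibrations in curves: Kodaira's formula yields Cartier index $12$ for elliptic fibrations, while for $\pp^1$-fibrations the moduli part can be analyzed explicitly using the structure results of \cite{PS09}. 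Taking $q$ to be a suitable multiple of $I$ and this Cartier index bound (both depending only on $\mathcal R$) yields the desired statement.

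The main obstacle is obtaining the effective bound on the Cartier index of $\mathbf M$: while the boundary part is handled by an elementary local lct computation and the index $I$ on the generic fiber is controlled by complement theory in dimension one, bounding $\mathbf M$ globally requires the full effective canonical bundle formula for fibrations in curves.
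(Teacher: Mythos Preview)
Your computation of the boundary part $B_T$ contains a genuine error. The formula
\[
\gamma_P = \min_i \frac{1 - \coeff_{F_i}(B)}{m_i}
\]
is only an \emph{upper bound} for the log canonical threshold $\lct(X,B;f^*P)$: it accounts for the divisorial valuations $F_i$ appearing in $f^*P$, but ignores exceptional valuations centered at singular points of the fiber or at intersections of fiber components. Taking a dlt model of $(X,B)$ does not make the fiber $f^*P$ simple normal crossing, so these exceptional contributions survive. Concretely, take $\mathcal R = \{0,1\}$, $B=0$, and a smooth elliptic surface $X \rar T$ with a Kodaira type~II (cuspidal) fiber over $t$. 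Then $f^*t$ is irreducible and reduced, your formula gives $\gamma_t = 1$, but the actual threshold is $5/6$, so $\coeff_t(B_T) = 1/6 \notin \Phi(\{0,1\})$. Thus your conclusion $\mathcal S = \mathcal R$ is false in general.

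The paper handles this by a genuinely different mechanism. After reducing to $T$ a curve and $X$ a surface, it separates the $\pp^1$-fibration case (handled via a relative $K_X$-MMP to a Mori fiber space and \cite[Proposition 6.3]{Bir16a}) from the elliptic case. In the elliptic case, the key input is Proposition~\ref{full case rel dim 1}: one passes to a dlt model of $(X,B + c f^*t)$ extracting a reduced component over $t$, finds a bounded $n$-complement $B^+$ there, and observes that $B^+ - B = c f^*t$. The coefficient analysis then shows $1-c \in \Phi(\mathcal S)$ for a finite $\mathcal S$ built from $\mathcal R$ \emph{and} $n$; the dependence on $n$ is exactly what absorbs the exotic thresholds coming from singular fibers. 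Your treatment of the moduli part via \cite[Theorem 8.1]{PS09} is essentially what the paper does in its Step~1 and is fine, but the boundary argument needs to be replaced.
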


\begin{proof}
We proceed in several steps.\\

{\bf Step 1:} We treat the existence of $q$.
\newline
The existence of $q$ follows from \cite{PS09}*{Theorem 8.1}.
Notice that \cite{PS09}*{Assumption 7.11}, which is necessary for \cite{PS09}*{Theorem 8.1}, is satisfied.
First, we notice that the condition can be checked over an open subset of the base.
Indeed, the condition on the singularities is for a general fiber.
Thus, if $\Theta$ is as in \cite{PS09}*{Assumption 7.11}, we can always perturb it with an $f$-vertical divisor.
Then, if a $\qq$-divisor $\Theta$ satisfies $\K X. + \Theta \sim \subs \qq. 0/U$ over an open subset $U$ of $T$, then we can find an $f$-vertical divisor $E$ such that $\K X. + \Theta + E \sim \subs \qq. 0/T$.
This shows that we can focus on the horizontal part of the divisor $\Theta$ in \cite{PS09}*{Assumption 7.11}.
Abusing notation, we still denote it by $\Theta$.
Then, as the morphism has relative dimension 1, \cite{PS09}*{Assumption 7.11} is vacuously satisfied:
if the general fiber is an elliptic curve, we can choose $\Theta = 0$, while if the general fiber is $\pr 1.$, we can choose $\Theta$ to be a suitable multiple of a general multisection of sufficiently high degree.
In particular, we are left with controlling the coefficients of $B_T$.

{\bf Step 2:} We reduce to the case when $T$ is a curve.
\newline
Notice that the computations needed to produce $B_T$ involve codimension 1 points.
Therefore, as $T$ is normal, we may assume that it is smooth.
Then, by \cite{Flo14}*{proof of Lemma 3.1}, we may assume that $T$ is a curve.
In particular, $X$ is a surface.

{\bf Step 3:} We reduce to the case when $X$ is projective and $(X,B)$ is $\qq$-factorial dlt.
\newline
Let $\overline{T}$ denote the compactification of $T$.
By \cite{HX13}*{Corollary 1.2}, there exist a projective log canonical pair $(\overline{X},\overline{B})$ with a contraction $\overline{f} \colon \overline{X} \rar \overline{T}$ such that $(\overline{X},\overline{B})\times_{\overline{T}} T = (X,B)$ and the restriction of $\overline{f}$ to $\overline{X} \times_{\overline{T}} T$ coincides with $T$.
Up to taking a dlt model, we may assume that $(\overline{X},\overline{B})$ is $\qq$-factorial dlt.
Then, up to removing the components of $\overline{B}$ that map to $\overline{T} \setminus T$, we may assume that the coefficients of $\overline{B}$ belong to $\Phi(\mathcal{R})$.
Finally, 
since $\overline{X}$ is a surface,
$(\overline{X},\overline{B})$ has a good minimal model over $\overline{T}$.
Thus, up to relabelling, we have $\K \overline{X}. + \overline{B} \sim_\qq 0/\overline{T}$.
As the relative minimal model may not be $\qq$-factorial and dlt, we replace it by a dlt model.
Now, let $(\overline{T},B \subs \overline{T}. + M \subs \overline{T}.)$ be the generalized pair induced by $(\overline{X},\overline{B})$ on $\overline{T}$.
By construction, we have $B_T = B \subs \overline{T}.|_T$ and $M_T \sim_\qq M \subs \overline{T}.|T$.
Therefore, it suffices to prove the statement for $B \subs \overline T.$ and $M \subs \overline{T}.$.

{\bf Step 4:} We reduce to the case when $X \rar T$ is an elliptic fibration.
\newline
Assume that $X \rar T$ is not an elliptic fibration.
Then, the general fiber is $\pr 1.$.
As $X$ is a $\qq$-factorial klt variety, we may run a $\K X.$-MMP relative to $T$.
As $\K X.$ is not pseudo-effective over $T$, this MMP ends with a Mori fiber space $X' \rar T$.
Let $B'$ denote the push-forward of $B$ to $X'$.
Then, we may apply \cite{Bir16a}*{Proposition 6.3}.
Indeed, $X'\rightarrow T$ is a Fano-type morphism, and
$K_{X'}+B'$ is $\qq$-trivial over the base.
In particular, there exists a finite set $\mathcal{S} \subset [0,1]$ such that the coefficients of $B_T$ belong to $\Phi(\mathcal S)$.

{\bf Step 5:} We conclude the proof by treating the case of an elliptic fibration.
\newline
We argue as in \cite{Bir16a}*{Step 3 in proof of Proposition 6.3}.
Fix a closed point $t \in T$, and let $c \coloneqq \lct(X,B;f^*(t))$.
Set $\Gamma \coloneqq B + c f^*(t)$.
Let $(X',\Gamma')$ be a dlt model for $(X,\Gamma)$ and write $\pi \colon X' \rar X$.
Then, there exists a boundary $B' \leq \Gamma'$ such that the coefficients of $B'$ belong to $\Phi(\mathcal{R})$, $\lfloor B' \rfloor$ has a component mapping to $t$, and $\pi \sups -1._* B \leq B'$.
Then, by Proposition \ref{full case rel dim 1}, $(X',B')$ admits a bounded $n$-complement $B'^+$ over $t \in T$.
Write $B^+ \coloneqq \pi_* B'$.
Then, $B^+$ is an $n$-complement for $(X,B)$ such that $(X,B^+)$ has a non-klt center mapping to $t$.
Since we have $\K X. + B \sim_\qq 0/T$, it follows that $B^+ - B \sim_\qq 0/T$ over $t$.
In particular, $B^+-B$ is a multiple of $f^*(t)$.
Since $(X,B^+)$ is log canonical but not klt over $t$, it follows that $B^+-B = c f^*(t)$.
\newline
Recall that the coefficient of $t$ in $B_T$ is $1-c$.
Fix a component $C$ of $f^*(t)$, and let $b$ and $b^+$ be the coefficients of $C$ in $B$ and $B^+$, respectively.
If $m \in \nn$ is the coefficient of $C$ in $f^*(t)$, we have $b^+=b+cm$.
Hence, we have $c = \frac{b^+ - b}{m}$.
Now, $b=1-\frac{r}{l}$ for some $r \in \mathcal{R}$ and $l \in \nn$.
Thus, we have $t = \frac{s}{m}$, where $s = b^+ - 1 + \frac{r}{l}$.
If $b^+ =1$, then $c = \frac{r}{lm}$, and $1 - c \in \Phi(\mathcal{R})$.
If $b^+ < 1$, as $r \leq 1$, and $nb^+$ is integral, we get
\[
1- \frac{1}{l} \leq b \leq b^+ \leq 1 - \frac{1}{n}.
\]
In particular, we have $l \leq n$, and there are finitely many possibilities for $s$.
Thus, we may find a finite set of rational numbers $\mathcal{S} \subset [0,1]$ as claimed.

{\bf Step 6:} We conclude arguing that we may assume $q \in \Phi(\mathcal{S})$.
\newline
This is immediate by replacing $\mathcal{S}$ with $\mathcal{S} \cup \lbrace \frac{q-1}{q}\rbrace$.
\end{proof}

\begin{corollary} \label{remark turning M into boundary}
Assume the setup of Theorem~\ref{effective cbf}, and choose $\epsilon$ with $0 \leq \epsilon \leq 1-\frac{1}{q}$.
Then, we may find $\Delta_T$ on $T$ such that the following facts hold:
\begin{itemize}
    \item[(1)] the coefficients of $\Delta_T$ belong to $\Phi(\mathcal{S})$;
    \item[(2)] $\Delta_T \sim_\qq \bM T.$; and
    \item[(3)] if $(T,B_T,\bM.)$ is generalized $\epsilon$-log canonical, then $(T,B_T+\Delta_T)$ is $\epsilon$-log canonical.
\end{itemize}
\end{corollary}

\begin{proof}
Let $T'$ be a birational model of $T$
where $\bM.$ descends, i.e.,
there exists a projective birational morphism
$\pi_T \colon T'\rightarrow T$ such that $\bM T'.$ is nef and $\bM T. = {\pi_T}_* \bM T'.$.
By Theorem \ref{effective cbf}, $q \bM T'.$ is integral and $|q \bM T'.|$ is a free linear series.
Thus, we may choose an integral divisor $0 \leq q \Delta \subs T'. \sim q \bM T'.$ such that the sub-pair $(T',B \subs T'. + \Delta \subs T'.)$ is sub $\epsilon$-log canonical,
whenever the pair $(T',B \subs T'.)$ is $\epsilon$-log canonical.
Notice that the generalized discrepancies of $(T,B_T,\bM.)$ are the same as the discrepancies of $(T',B \subs T'.)$.
Denote by $\Delta_T$ the push-forward of $\Delta \subs T'.$ to $T$.
Then, the pair $(T,B_T+\Delta_T)$ is generalized
$\epsilon$-log canonical, whenever $(T,B_T,\bM.)$ is generalized $\epsilon$-log canonical.
Furthermore, the generalized discrepancies of $(T,B_T,\bM.)$ are greater than or equal to the discrepancies of $(T,B_T+\Delta_T)$.
Finally, notice that, by Theorem \ref{effective cbf}, the coefficients of $B_T + \Delta_T$ belong to $\Phi(\mathcal{S})$.
\end{proof}

\subsection{The general case}
Now, we prove Theorem \ref{thm:log-canonical-surface-complements} in full generality.

\begin{proof}[{Proof of Theorem \ref{thm:log-canonical-surface-complements}}]
By assumption, there exists a $\qq$-divisor $B' \geq 0$ such that $\K X. + B + B' \sim_\qq 0 /T$ over a neighborhood of $t \in T$.
As argued in the proof of Proposition \ref{full case rel dim 1}, we may assume that $(X,B+B')$ is $\qq$-factorial dlt, $B' \neq 0$, and $\lfloor B' \rfloor = 0$.
\newline
For $0 < \epsilon \ll 1$, the pair $(X,B + (1+\epsilon)B')$ is dlt.
Furthermore, we have $\epsilon B' \sim_\qq \K X. + B + (1+\epsilon)B'/T$.
Therefore, we may run a $B'$-MMP over $T$, which terminates with a good minimal model $Z \rar T$.
Let $B_Z$ and $B'_Z$ denote the push-forward of $B$ and $B'$, respectively.
Since $B' \sim_\qq -(\K X. + B)$, by part (3) of Lemma \ref{reduction complements}, it suffices to produce a bounded $n$-complement for $(Z,B_Z)$ over $t \in T$.
\newline
By Proposition \ref{full case rel dim 1}, we may assume that $T=\Spec(k)$ or that $X \rar T$ is birational.
We will treat these cases separately.

{\bf Case 1:} We assume that $X \rar T$ is birational.
\newline
As argued in Case 1 of the proof of Proposition \ref{prop logcy surface complement}, we may assume that $t$ is a closed point and that $\lfloor B \rfloor \neq 0$.
By the above reduction, it suffices to find a bounded $n$-complement for $(Z,B_Z)$ over $t$.
Since $Z$ is a minimal model for $B'$ over $T$, it follows that $-(\K Z. + B_Z)$ is nef over $T$.
Since $Z \rar T$ is birational, $-(\K Z. + B_Z)$ is automatically big over $T$.
Since $X \rar Z$ is an MMP for $-(\K X. + B)$ and $(X,B)$ is not klt over $t$, it follows that $(Z,B_Z)$ is not klt over $t$.
Then, up to replacing $(Z,B_Z)$ with a dlt model, we can argue as in Case 1 in the proof of Proposition \ref{prop logcy surface complement}.
The only difference is the following: the input for \cite{Pro99}*{Proposition 4.4.3} is no longer \cite{thesis}*{Theorem 1.8.7}, but it is \cite{thesis}*{Proposition 2.7.2}.

{\bf Case 2:} We assume that $T = \Spec(k)$.
\newline
By construction, $-(\K Z. + B_Z)$ is semi-ample.
Let $\phi \colon Z \rar Z_0$ be the morphism induced by a sufficiently divisible multiple of $-(\K Z. + B_Z)$.
Then, $\K Z. + B_Z \sim_\qq \phi^*L$ for some $\qq$-Cartier divisor $L$ on $Z_0$, and $L$ is anti-ample.
If $Z_0$ is birational to $Z$, by part (4) of Lemma \ref{reduction complements}, we can replace $Z$ by $Z_0$, and assume that $-(\K Z. + B_Z)$ is ample.
Then, this case is covered by \cite{thesis}*{Theorem 1.8.5}.
If $Z_0 = \Spec(k)$, then it follows that $B'$ is contracted on $Z$. We deduce the statement by  \cite{thesis}*{Theorem 1.8.7}.
\newline
Therefore, we may assume that $Z_0$ is a smooth projective curve.
Let $(Z_0,B \subs Z_0. , \bM.)$ be the generalized pair induced by $(Z,B_Z)$ via the canonical bundle formula.
Notice that, by Theorem  \ref{effective cbf}, we have control of the coefficients of $B_{Z_0}$ and the Cartier index of $\bM.$.
Since $L \sim_\qq \K Z_0. + B \subs Z_0. + \bM  Z_0.$ is anti-ample, it follows that $Z_0 = \pr 1.$.
By \cite{Bir16a}*{Theorem 1.10}, there exists a bounded $n$ complement $B^+ \subs Z_0.$ for the generalized pair $(Z_0,B \subs Z_0. , \bM .)$.
By Theorem \ref{effective cbf}, there is a bounded $q$ such that
\begin{equation} \label{equation linear eq}
q(\K Z. + B_Z) \sim q \phi^*(\K Z_0. + B \subs Z_0. + \bM  Z_0.).
\end{equation}
Set $G \subs Z_0. \coloneqq B \subs Z_0. ^+ - B \subs Z_0.$, and define $G_Z \coloneqq \phi^* G \subs Z_0.$ and $B_Z^+ \coloneqq B_Z + G_Z$.
By equation \eqref{equation linear eq}, it follows that
\[
q(\K Z. + B_Z^+) \sim q \phi^*(\K Z_0. + B^+ \subs Z_0. + \bM  Z_0.).
\]
Up to taking a bounded multiple depending only on the setup of the problem, we may assume that $q(\K Z_0. + B^+ \subs Z_0. + \bM  Z_0.)$ is Cartier.
Thus, it follows that $q(\K Z. + B_Z^+)$ is Cartier.
Then, we are left with showing that $(Z,B_Z^+)$ is log canonical.
Since $B^+_Z-B_Z$ is vertical over $Z_0$, it follows that $(Z,B_Z^+)$ is log canonical over the generic point of $Z_0$.
Then, notice that $(Z_0,B_{Z_0}^+ , \bM.)$ is the generalized pair induced by $(Z,B_Z^+)$.
Then, by \cite{Amb99}*{Proposition 3.4}, as $(Z_0,B_{Z_0}^+ , \bM.)$ is generalized log canonical, then so is $(Z,B_Z^+)$.
\end{proof}

\section{Lifting complements from surfaces}\label{lifting-complements-surfaces}

In this section, we prove Theorem~\ref{main-theorem-hyper} under the assumptions that the contraction $X \rar T$ factors through a surface.
In the notation of \S\ref{sketch}, we consider the case when $\dim Z_0 = 2$.
In particular, we prove the following proposition.

\begin{proposition}\label{prop:lifting-complements-surfaces}
Let $\mathcal{R} \subset [0,1]$ be a finite set of rational numbers.
Then, there exists a natural number $n$ only depending on $\mathcal{R}$ such that the following holds.
Let $(Z,B_Z)$ be a log canonical pair such that $\dim Z = 3$ and the coefficients of $B_Z$ belong to $\Phi(\mathcal{R})$.
Let $Z \rightarrow T$ be a contraction between normal quasi-projective varieties  such that $(Z,B_Z)$ is $\qq$-complemented over $t\in T$.
Moreover, assume that $Z\rightarrow T$ factors as $\phi \colon Z \rar Z_0$ and $Z_0 \rar T$, where
\begin{itemize}
    \item $\dim Z_0 = 2$;
    \item $Z \rar Z_0$ and $Z_0 \rar T$ are contractions; and
    \item $\K Z. + B_Z \sim_\qq \phi^*L$, where $L$ is a $\qq$-Cartier divisor on $Z_0$.
\end{itemize}
Then, up to shrinking $T$ around $t$, we can find an effective divisor
\[
\Gamma \sim_{T} -n(K_Z+B_Z)
\]
such that $(Z,B_Z+\Gamma/n)$ is a log canonical pair.
\end{proposition}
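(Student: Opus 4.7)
The plan is to descend the problem from the 3-fold $Z$ to the surface $Z_0$ via the effective canonical bundle formula, complement there using Theorem \ref{thm:log-canonical-surface-complements}, and pull back. Because $K_Z + B_Z \sim_\qq \phi^* L$, the morphism $\phi$ is an lc-trivial fibration of relative dimension one, so Theorem \ref{effective cbf} yields a generalized pair $(Z_0, B_{Z_0}, \mathbf{M})$ with $\coeff(B_{Z_0}) \subset \Phi(\mathcal{S})$ and a positive integer $q$ such that $q\mathbf{M}$ is integral and
\[
q(K_Z + B_Z) \sim q\phi^*(K_{Z_0} + B_{Z_0} + \mathbf{M}_{Z_0}),
\]
where $\mathcal{S}$ and $q$ depend only on $\mathcal{R}$. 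Remark \ref{remark turning M into boundary} then provides an effective $\Delta_{Z_0}$ with $q\Delta_{Z_0}$ integral, $q\Delta_{Z_0} \sim q\mathbf{M}_{Z_0}$, such that $(Z_0, B_{Z_0} + \Delta_{Z_0})$ is log canonical with coefficients in $\Phi(\mathcal{S}')$ for some finite $\mathcal{S}' \subset [0,1] \cap \qq$ depending only on $\mathcal{R}$.

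Next I would check that $(Z_0, B_{Z_0} + \Delta_{Z_0})$ is $\qq$-complemented over $t \in T$. Given a $\qq$-complement $B' \geq 0$ of $(Z, B_Z)$ over $t$, restricting to a generic fiber $F$ of $\phi$ gives $K_Z + B_Z|_F \sim_\qq 0$ (since $\phi^*L|_F = 0$) and $(K_Z + B_Z + B')|_F \sim_\qq 0$ (since $F$ lies over the generic point of $T$); subtracting yields $B'|_F \sim_\qq 0$, and since $B'|_F \geq 0$ on the curve $F$, we get $B'|_F = 0$. Thus $B'$ is vertical over $Z_0$, and applying the canonical bundle formula to $(Z, B_Z + B')$ gives the same moduli part $\mathbf{M}$ together with a new boundary $B_{Z_0} + \Xi_{Z_0}$ for some effective $\Xi_{Z_0}$; this is a generalized $\qq$-complement of $(Z_0, B_{Z_0}, \mathbf{M})$ over $t$. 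A further application of Remark \ref{remark turning M into boundary} produces the required log canonical $\qq$-complement $(Z_0, B_{Z_0} + \Delta_{Z_0} + \Xi_{Z_0})$.

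Theorem \ref{thm:log-canonical-surface-complements} then yields a positive integer $n'$ depending only on $\mathcal{R}$ and an effective $\Gamma_{Z_0} \sim_T -n'(K_{Z_0} + B_{Z_0} + \Delta_{Z_0})$ with $(Z_0, B_{Z_0} + \Delta_{Z_0} + \Gamma_{Z_0}/n')$ log canonical. I would then put $n \coloneqq n'q$ and $\Gamma \coloneqq q\phi^*\Gamma_{Z_0}$; chaining the linear equivalences from Theorem \ref{effective cbf}, from the surface complement, and from $q\Delta_{Z_0} \sim q\mathbf{M}_{Z_0}$ produces an honest linear equivalence $\Gamma \sim_T -n(K_Z + B_Z)$. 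Log canonicity of $(Z, B_Z + \Gamma/n) = (Z, B_Z + \phi^*\Gamma_{Z_0}/n')$ would then follow by applying Remark \ref{remark turning M into boundary} once more to conclude that $(Z_0, B_{Z_0} + \Gamma_{Z_0}/n', \mathbf{M})$ is generalized log canonical, noting that this is precisely the generalized pair on $Z_0$ induced by $(Z, B_Z + \phi^*\Gamma_{Z_0}/n')$ through the canonical bundle formula (the added divisor is vertical, so the moduli part is unchanged), and combining this with the trivial log canonicity over the generic point of $Z_0$ via \cite[Proposition 3.4]{Amb99}. The main technical obstacle is ensuring that every relevant equivalence is genuine and not merely $\qq$-linear, since otherwise one obtains only a $\qq$-complement; the integrality of $q\mathbf{M}$ and of $q\Delta_{Z_0}$, together with the linear equivalence guaranteed by Theorem \ref{effective cbf}, are precisely what is engineered to make this work.
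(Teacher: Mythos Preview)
Your proposal is correct and follows essentially the same route as the paper: apply Theorem~\ref{effective cbf} to produce the generalized pair $(Z_0,B_{Z_0},\mathbf{M})$, replace $\mathbf{M}$ by a general $\Delta_{Z_0}$ via Remark~\ref{remark turning M into boundary}, observe that the given $\qq$-complement $B'$ is vertical over $Z_0$ so that $(Z_0,B_{Z_0}+\Delta_{Z_0})$ is $\qq$-complemented, invoke Theorem~\ref{thm:log-canonical-surface-complements} on the surface, and pull the resulting complement back, checking log canonicity through \cite[Proposition~3.4]{Amb99}. The only cosmetic differences are that the paper writes $B'=\phi^*B'_{Z_0}$ directly (rather than arguing via the generic fiber) and absorbs your factor $q$ by simply demanding $q\mid n$ for the surface complement; your explicit bookkeeping $n=n'q$ and your closing remark about genuine versus $\qq$-linear equivalence address exactly the point the paper handles by the clause ``up to taking a bounded multiple'' in Case~2 of the proof of Theorem~\ref{thm:log-canonical-surface-complements}.
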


\begin{proof}
Let $(Z_0,B \subs Z_0. , \bM  Z_0.)$ be the generalized pair induced by $(Z,B_Z)$ via $\phi$ and the canonical bundle formula.
Then, by Theorem \ref{effective cbf}, there exist a finite set of rational numbers $\mathcal{S} \subset [0,1]$ and a positive integer $q$ such that the coefficients of $B_{Z_0}$ belong to $\Phi(\mathcal{S})$ and $q\bM.$ is b-free.
Let $Z_0'$ be a higher model of $Z_0$ where the moduli b-divisor $\bM .$ descends.
Let $\Delta \subs Z_0.$ be as in Corollary \ref{remark turning M into boundary}.
In particular, we have $0 \leq \Delta \subs Z_0. \sim_\qq \bM Z_0.$ and the generalized discrepancies of the generalized pair $(Z_0,B \subs Z_0. , \bM  Z_0.)$ are less or equal to the discrepancies of the pair $(Z_0,B \subs Z_0. + \Delta \subs Z_0.)$.
\newline
Let $B'_Z$ be a $\qq$-complement for $(Z,B_Z)$ over $t \in T$.
In particular, up to shrinking $T$ around $t$, we may assume that $\K Z. + B_Z + B_Z' \sim_\qq 0/T$.
Since $\K Z. + B_Z \sim_\qq 0/Z_0$, it follows that $B'_Z \sim_\qq 0/Z_0$.
In particular, $B'_Z$ is vertical over $Z_0$.
Then, we have $B'_Z = \phi^* B' \subs Z_0.$ for some effective $\qq$-Cartier divisor $B' \subs Z_0.$ on $Z_0$.
It follows that the generalized pair $(Z_0,B \subs Z_0. + B' \subs Z_0. , \bM .)$ is induced by $(Z,B_Z+B'_Z)$ via the canonical bundle formula.
Since $(Z,B_Z+B'_Z)$ is log canonical, it follows that $(Z_0,B \subs Z_0. + B' \subs Z_0. , \bM .)$ is generalized log canonical.
Furthermore, as $\Delta \subs Z_0.$ is chosen generically, $(Z_0,B \subs Z_0. + B' \subs Z_0. + \Delta \subs Z_0.)$ is a log canonical pair.
In particular, $B \subs Z_0.'$ is a $\qq$-complement over $t \in T$ for the pair $(Z_0,B \subs Z_0. + \Delta \subs Z_0.)$.
\newline
By Corollary \ref{remark turning M into boundary}, the coefficients of $B \subs Z_0. + \Delta \subs Z_0.$ belong to $\Phi(\mathcal S)$.
Thus, by Theorem \ref{thm:log-canonical-surface-complements}, the pair $(Z_0,B \subs Z_0. + \Delta \subs Z_0.)$ admits a bounded $n$-complement $B \subs Z_0.^+$ over $t \in T$.
We may assume that the $q$ as in Theorem \ref{effective cbf} divides $n$.
Thus, it follows that $B \subs Z_0.^+$ is a complement also for the generalized pair $(Z_0,B \subs Z_0. , \bM .)$ over $t \in T$.
Set $G \subs Z_0. \coloneqq B \subs Z_0. ^+ - B \subs Z_0.$, and define $G_Z \coloneqq \phi^* G \subs Z_0.$ and $B_Z^+ \coloneqq B_Z + G_Z$.
Arguing as in Case 2 of the proof of Theorem \ref{thm:log-canonical-surface-complements}, it follows that $B_Z^+$ is an $n$-complement for $(Z,B_Z)$ over $t \in T$.
\end{proof}

\section{Lifting complements from curves}\label{lifting-complements-curves}

In this section, we prove Theorem~\ref{main-theorem-hyper} under the assumptions that the contraction $Z \rar T$
factors through a variety $Z_0$ of dimension at most one over which $K_Z+B_Z$ is $\qq$-trivial,
i.e., the morphism factors through a curve $Z_0$ and $K_Z+B_Z\sim_{\qq,Z_0} 0$, or $Z_0=T=\Spec(k)$
and $K_Z+B_Z\sim_{\qq}0$. In particular, we prove the following proposition.

\begin{proposition}\label{prop:lifting-complements-curves}
Let $\mathcal{R} \subset [0,1]$ be a finite set of rational numbers.
Then, there exists a natural number $n$ only depending on $\mathcal{R}$ that satisfies the following.
Let $(Z,B_Z)$ be a log canonical pair such that $\dim Z = 3$ and the coefficients of $B_Z$ belong to $\Phi(\mathcal{R})$.
Let $Z \rightarrow T$ be a contraction between normal quasi-projective varieties such that $(Z,B_Z)$ is $\qq$-complemented over $t\in T$.
Moreover, assume that $Z\rightarrow T$ factors as $Z \rar Z_0 \rar T$, where
\begin{itemize}
    \item $\dim Z_0 \leq 1$;
    \item $Z \rar Z_0$ and $Z_0 \rar T$ are contractions; and
    \item $\K X. + B \sim_{\qq,Z_0} 0$.
\end{itemize}
Then, up to shrinking $T$ around $t$, we can find
\[
\Gamma \sim_{T} -n(K_X+B)
\]
such that $(X,B+\Gamma/n)$ is a log canonical pair.
\end{proposition}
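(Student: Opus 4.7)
The plan is to split by the dimension of $Z_0$ and, in each subcase, reduce the complement problem on $Z$ to one of known type in lower dimension.

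When $\dim Z_0 = 0$ we necessarily have $T = Z_0 = \Spec(k)$, and $(Z,B_Z)$ is a log canonical $3$-fold with $K_Z + B_Z \sim_\qq 0$. A $\qq$-complement $B'_Z$ then satisfies $B'_Z \sim_\qq 0$, so $(Z, B_Z + B'_Z)$ is a log canonical $\qq$-trivial $3$-fold with hyperstandard coefficients; the boundedness of the index for such pairs (\cite[Theorem 1.5]{Xu19a}) supplies an $n = n(\mathcal R)$ with $n(K_Z + B_Z + B'_Z) \sim 0$, yielding the desired $n$-complement.

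When $\dim Z_0 = 1$, write $\phi \colon Z \to Z_0$ and apply the canonical bundle formula to $\phi$ to induce a generalized pair $(Z_0, B_{Z_0}, \bM.)$ with
\[
q(K_Z + B_Z) \sim q\phi^*(K_{Z_0} + B_{Z_0} + \bM Z_0.)
\]
for a positive integer $q$ and with $\bM.$ chosen as in \eqref{equation coeffs M}. Invoking Birkar's effective adjunction for generalized pairs (\cite[Theorem 1.4]{Bir16a}), I would obtain $q$ and a finite set $\mathcal S \subset [0,1]\cap\qq$, both depending only on $\mathcal R$, such that $\coeff(B_{Z_0}) \subset \Phi(\mathcal S)$ and $q\bM.$ is integral. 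The hypothesis $K_Z + B_Z \sim_\qq 0$ over $Z_0$, combined with the fact that an effective $\qq$-trivial divisor on the projective generic fiber of $\phi$ must vanish, forces $B'_Z$ to be vertical over $Z_0$ and (up to $\qq$-linear equivalence) pulled back from an effective $\qq$-Cartier divisor $B'_{Z_0}$ on $Z_0$, so $(Z_0, B_{Z_0} + B'_{Z_0}, \bM.)$ becomes a generalized log canonical pair $\qq$-complemented over $t \in T$. I would then find a bounded $n$-complement $B_{Z_0}^+ = B_{Z_0} + G_{Z_0}$ of $(Z_0, B_{Z_0}, \bM.)$ on the curve by applying \cite[Theorem 1.10]{Bir16a} if $T = \Spec(k)$, or, if $\dim T = 1$, by noting that $Z_0 \to T$ is then a birational contraction of curves, hence an isomorphism, and producing the complement by a direct local calculation on the curve germ around $t$ (where the hyperstandard control on coefficients and the bounded Cartier index of $q\bM.$ reduce matters to elementary arithmetic).

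Setting $B_Z^+ \coloneqq B_Z + \phi^* G_{Z_0}$ and assuming $q \mid n$, the displayed relation gives $n(K_Z + B_Z^+) \sim 0$ over $t$, and $(Z,B_Z^+)$ is log canonical by \cite[Proposition 3.4]{Amb99}, exactly as in the conclusion of the proof of Theorem \ref{thm:log-canonical-surface-complements}. The main obstacle is the effective canonical bundle formula in the case $\dim Z_0 = 1$: Theorem \ref{effective cbf} of this paper is stated only for relative dimension $1$, while here $\phi$ has relative dimension $2$, so one must appeal to (or reprove) Birkar's more general effective adjunction for generalized pairs, simultaneously controlling the hyperstandard coefficients of $B_{Z_0}$ and the bounded denominator $q$ of $\bM.$ in terms of $\mathcal R$ alone; this is the inductive step that genuinely uses complements in lower dimension.
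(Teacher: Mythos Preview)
Your high-level strategy---apply the canonical bundle formula to $\phi\colon Z\to Z_0$, control the induced generalized pair on the curve, complement there, and pull back---is exactly what the paper does. The gap is precisely the one you flag yourself at the end: you do not actually establish an effective canonical bundle formula for $\phi$, whose fibers are surfaces. Your citation of \cite[Theorem 1.4]{Bir16a} does not supply this; Birkar's effective adjunction in that paper (Proposition 6.3) applies to Fano-type morphisms, not to arbitrary lc-trivial surface fibrations, and no result there simultaneously bounds the Cartier index of $\bM.$ and forces the coefficients of $B_{Z_0}$ into a hyperstandard set depending only on $\mathcal R$ in the generality you need.

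The paper closes this gap by genuine work rather than a citation. It runs a $K_Z$-MMP over $Z_0$ and splits into four subcases according to whether the outcome is a Mori fiber space or a good minimal model, and whether the new base $Z_1$ is a curve or a surface. If $Z_1$ is a surface one reduces to Proposition~\ref{prop:lifting-complements-surfaces}; if $Z_1$ is a curve and $Z\to Z_1$ is a Mori fiber space one is in the Fano-type situation where \cite[Proposition 6.3]{Bir16a} applies; the delicate remaining case is when $K_Z\sim_{\qq,Z_0}0$, where a further extraction and MMP reduce to the situation in which the general fiber is a \emph{smooth} surface with torsion canonical class, and then \cite[Theorem 4.5]{FM00} together with the classification of such surfaces bounds both $q$ and the index of $\bM.$. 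This case analysis is the substance of the proof, and your proposal skips it.

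Two smaller points. In the $\dim Z_0=0$ case the relevant input is the index bound for projective log Calabi--Yau $3$-folds, which is \cite[Theorem 1.13]{Xu19b}, not \cite[Theorem 1.5]{Xu19a}. And in the local case $\dim Z_0=\dim T=1$, the paper first arranges (via a log canonical threshold and dlt modification) that $(Z,B_Z)$ has a log canonical center over $t$, so that after shrinking the boundary part on the base is simply $\{t\}$; without this reduction your ``direct local calculation'' would still need control on $\coeff(B_{Z_0})$, which again feeds back into the effective CBF problem you have not resolved.
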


\begin{proof}
We will prove the statement in the three possible cases for $(\dim Z_0, \dim T)$ in $\{ (0,0), (1,1), (1,0)\}$. 
The above cases will be called case $1,2$ and $3$, respectively.
In what follows, according to the notation of \S\ref{sketch}, we will denote by $\phi\colon Z \rightarrow Z_0$ the
$(K_Z+B_Z)$-trivial morphism, and we will denote by $B'_Z$ the $\qq$-complement of $(Z,B_Z)$ over the point $t\in T$.

\textbf{Case 1:} We deal with the projective case of the statement.
\newline
In the case that $\dim Z_0=\dim T=0$ we have that 
$Z_0=T=\Spec(k)$ for some algebraically closed field $k$, and $K_Z+B_Z\sim_\qq 0$.
Bounding the index of $K_Z+B_Z$, in this case, is known as the projective index conjecture for $\qq$-trivial
log canonical $3$-folds, and this result is proved in~\cite{thesis}*{Theorem 1.8.7}.
Indeed, we know that there exists $n$, only depending on $\mathcal{R}$ such that 
$n(K_Z+B_Z)\sim 0$, proving the claim in the first case.

\textbf{Case 2:} We prove the boundedness of complements for $3$-folds locally over a curve.
\newline
Now we consider the case $\dim Z_0=\dim T=1$,
which means that the contraction $Z_0\rightarrow T$ between normal curves is an isomorphism.
Hence, we may assume that $Z_0=T$, and we are trying to complement the log canonical $3$-fold 
$(Z,B_Z)$ over the smooth point $t$ of the curve $T$.
By Lemma \ref{rmk generic point}, we may assume that $t$ is a closed point of $T$.
This case is the boundedness of complements for $\qq$-trivial $3$-folds locally over a curve.
Observe that, by the assumptions of the statement, in this case, $K_Z+B_Z$ is $\qq$-trivial over the point $t\in T$.
\newline
In order to prove the statement, we will make some reductions.
By Lemma~\ref{rmk lcc}, we may assume that the log canonical pair $(Z,B_Z)$ has a log canonical center that is mapped to $t$.
\newline
Therefore, when we apply the canonical bundle formula, up to shrinking around $t \in T$, on the base we obtain a generalized pair of the form
$(T,\{t\}, \bM.)$.
More precisely, we can write
\begin{equation}
\label{s6:can-bun}
q(K_Z+B_Z) \sim q\phi^*(K_T+\{t\} + \bM T.),   
\end{equation}
where $q$ is some natural number, $\{t\}$ is the boundary divisor, and $\bM T.$ is the moduli part.
\newline
Observe that, since $T$ is smooth at $t$, then the Cartier index of $\bM T.$ is equal to the Weil index of $\bM T.$.
We will run a minimal model program for $K_Z$ over $T$, which terminates with a model $Z'$.
Notice that, as $(Z,B_Z)$ is dlt, then $Z$ and $Z'$ are klt.
We either have a Mori fiber space $Z'\rightarrow Z_1$ over $T$,
or we have a semi-ample divisor $K_{Z'}$ over $T$,
which then induces a morphism $Z'\rightarrow Z_1$.
We will proceed depending on what kind of morphism $Z'\rightarrow Z_1$ is.
The case in which $Z'\rightarrow Z_1$ is a Mori fiber space will be called Case $2.1$, and the case in which $Z'\rightarrow Z_1$ is the ample model will be called case Case $2.2$.
Since each step of this minimal model program is $(K_Z+B_Z)$-trivial, 
by part (4) of Lemma \ref{reduction complements}, it suffices to produce an $n$-complement for the log canonical pair $(Z',B_{Z'})$ over $t\in T$.
Replacing $(Z,B_Z)$ with $(Z',B_{Z'})$, we may assume that $K_Z$ has either a good minimal model over $T$
or a Mori fiber space structure over $T$.

\textbf{Case 2.1.a:} The MMP terminates with a MFS to a curve.
\newline
Assume that $Z\rightarrow Z_1$ is a Mori fiber space and $\dim Z_1=1$.
In this case the contraction $Z_1\rightarrow T$ is an isomorphism.
Therefore, since $Z$ is klt, the morphism $Z\rightarrow T$ is of Fano-type.
Thus, we can apply~\cite{Bir16a}*{Proposition 6.3} to conclude the claim.

\textbf{Case 2.1.b:} The MMP terminates with a MFS to a surface.
\newline
Assume that $Z\rightarrow Z_1$ is a Mori fiber space and $\dim Z_1=2$.
This case follows from applying Proposition~\ref{prop:lifting-complements-surfaces}.

\textbf{Case 2.2.a:} The MMP terminates with a good minimal model mapping to a curve.
\newline
Assume that $K_Z$ is semi-ample over $T$ and the defined morphism $Z\rightarrow Z_1$ has $\dim Z_1=1$.
In this case, we have that $Z_1\rightarrow T$ is an isomorphism and $K_Z\sim_{\qq,T}0$.
First, we will reduce to the case in which the general fiber of the morphism $Z\rightarrow T$ is smooth.
\newline
Assume that the general fiber of $Z \rar T$ is not smooth.
Then, the pair $(Z,B_Z)$ has a horizontal non-terminal valuation over $T$.
Let $\pi \colon Z'' \rightarrow Z$ be a projective birational morphism that only extracts a divisor computing the minimal log discrepancy
of the general fiber of $Z \rightarrow T$. 
The divisor extracted on $Z''$ is horizontal over $T$.
Hence, we can write $\pi^*(K_Z+B_Z)=K_{Z''}+B_{Z''}$, where $B_{Z''}$ has a unique horizontal component $E$ whose
coefficient in $B_{Z''}$ is $1$ minus the minimal log discrepancy of a log canonical surface $Z_{\eta}$.
Hence, the coefficient of $E$ belongs to a fixed set satisfying the descending chain condition~\cites{Sho91,Ale93}.
By the global ascending chain condition~\cite{HMX14}*{Theorem 1.5},
we conclude that the coefficient of $B_{Z''}$ along $E$ belongs to a finite set 
$\mathcal{F} \subset [0,1]$ only depending on $\dim Z_\eta =2$.
\newline
By part (4) of Lemma \ref{reduction complements}, a log canonical $n$-complement for $(Z'',B_{Z''})$ over $t\in T$ 
pushes forward to a log canonical $n$-complement for $(Z,B_Z)$ over $t\in T$.
Thus, it suffices to produce an $n$-complement for $K_{Z''}+B_{Z''}$, whose coefficients belongs to $\Phi(\mathcal{R})\cup \mathcal{F}$.
Since the log canonical pair $(Z'',B_{Z''})$ is $\qq$-trivial over $T$, we conclude that 
$K_{Z''}+(1-\epsilon)B_{Z''}$ is not pseudo-effective over $T$ for $\epsilon \in (0,1)$.
Indeed, $B_{Z''}$ contains a horizontal component over $T$.
We run a minimal model program for $K_{Z''}+(1-\epsilon)B_{Z''}$ over $T$,
which terminates with a Mori fiber space $Z^{(3)}\rightarrow Z_2$ over $T$.
Observe that, by part (4) of Lemma \ref{reduction complements}, it is enough to find an $n$-complement for the divisor $K_{Z^{(3)}}+B_{Z^{(3)}}$ over $t\in T$, as the above minimal model program is $K_{Z''}+B_{Z''}$-trivial.
If $Z_2$ is a surface, then we conclude the existence of an $n$-complement by Proposition \ref{prop:lifting-complements-surfaces}.
If $Z_2$ is a curve, then $Z_2\rightarrow T$ is an isomorphism.
Hence, up to replacing $(Z,B)$ with $(Z^{(3)},B_{Z^{(3)}})$, we may assume that the morphism $Z\rightarrow T$ is of Fano-type, 
and we are in the situation of Case 2.1.a.
So, the claim holds if the general fiber of $Z \rar T$ is not smooth.
\newline
Now, we may assume that the generic fiber of $X\rightarrow Z_0$ is a smooth projective surface
with $K_{X_\eta} \sim_\qq 0$.
Thus, by~\cite{FM00}*{Theorem 4.5}, we know that $q$ and the Weil index of $\bM T.$ 
only depend on the index of $K_{X_\eta}$ and the second Betti number of the index one cover of $X_{\eta}$.
Observe that, by generic smoothness and invariance of plurigenera, the index of the generic fiber is equal to the index of a general fiber.
On the other hand, since this morphism is topologically trivial over a non-empty open subset (see, e.g.~\cite{Ver76}*{Corollarie 5.12.7}), we conclude that the Betti numbers of the index one cover of $X_{\eta}$ coincide with the Betti numbers of the index one cover of a general fiber. 
Thus, by the classification of smooth surfaces with $\qq$-trivial canonical divisor over an algebraically closed
field of characteristic zero, we know that both the index of the canonical divisor and the second Betti number of the index one cover
can only take finitely many possible values.
In particular, the natural number $q$ and the Weil index of $\bM T.$ can take finitely many possible values as well.
Hence, the Cartier index of $K_T+\{t\}+\bM T.$ only depends on $\mathcal{R}$.
By the linear equivalence~\eqref{s6:can-bun}, we conclude that the Cartier index of $K_Z+B_Z$ only depends on $\mathcal{R}$.

\textbf{Case 2.2.b:} The MMP terminates with a good minimal model mapping to a surface.
This case also follows from Proposition~\ref{prop:lifting-complements-surfaces}.

\textbf{Case 3:} We prove the boundedness of global complements for projective $3$-folds admitting a dominant morphism to a curve.
\newline
Now we will consider the case in which $\dim Z_0=1$ and $\dim T=0$, 
which means that $T={\rm Spec}(k)$ for some algebraically closed field $k$ of characteristic zero.
In this case, $Z_0$ is a projective curve over $T$, and we will aim to construct a
$\qq$-complemented generalized pair on $Z_0$.
Afterward, we will find a log canonical $n$-complement for this pair on $Z_0$ and lift it to a log canonical $n$-complement on $Z$.
The strategy of the third case is similar to the strategy of the second case, 
with the difference that now we need to produce a projective complement on the curve $Z_0$,
while in the second case we produced a local complement around the point $t$ on the curve $T$.
\newline
We run a minimal model program for $K_Z$ over $Z_0$, which terminates with $Z'$.
We either have a semi-ample divisor $K_{Z'}$ over $Z_0$ which induces the morphism $Z'\rightarrow Z_1$,
or we have a Mori fiber space $Z'\rightarrow Z_1$ over $Z_0$.
The former case will be called case 3.1, and the latter case 3.2.
Since each step of this minimal model program is $(K_Z+B_Z)$-trivial, by part (4) of Lemma \ref{reduction complements},
it suffices to produce a log canonical $n$-complement for $(K_{Z'}+B_{Z'})$ over $T={\rm Spec}(k)$.
Replacing $(Z,B_Z)$ with $(Z',B_{Z'})$, we may assume that either $Z$ is a good minimal model over $Z_0$
or $Z$ has a Mori fiber space structure $Z\rightarrow Z_1$ over $Z_0$.

\textbf{Case 3.1.a:} The MMP terminates with a MFS mapping to a curve.
\newline
Assume that $Z\rightarrow Z_1$ is a Mori fiber space and $\dim Z_1=1$.
In this case the morphism $Z_1\rightarrow Z_0$ is an isomorphism,
hence we may assume that the morphism $Z \rightarrow Z_0$ is of Fano-type. 
By~\cite{Bir16a}*{Proposition 6.3}, we may assume there exists a natural number $q$,
a finite set of rational numbers $\mathcal{S} \subset [0,1]$, and a generalized pair 
$(Z_0,B_{Z_0}, \bM .)$ on $Z_0$, such that 
\[
q(K_{Z}+B_Z) \sim q(K_{Z_0}+B_{Z_0}+\bM {Z_0}.),
\]
where $q \bM {Z_0}.$ is an integral b-Cartier b-divisor and the coefficients of $B_{Z_0}$ belong to $\Phi(\mathcal{S})$.
Observe that the projective generalized log canonical pair $(Z_0,B_{Z_0}, \bM.)$ is
$\qq$-complemented over ${\rm Spec}(k)$.
Indeed, since $(Z,B_Z)$ is $\qq$-complemented by an effective divisor $B'_Z$,
we can apply the canonical bundle formula for $(Z,B_Z+B'_Z)$ with respect to the morphism
$Z\rightarrow Z_0$ to obtain a $\qq$-trivial generalized pair 
$(Z_0,B_{Z_0}+B'_{Z_0},\bM.)$.
We conclude that either $B_{Z_0}=B'_{Z_0}=\bM {Z_0}.=0$ and $Z_0$ is a projective elliptic curve, or $Z_0\simeq \pp^1$.
In the former case, we have $q(K_Z+B_Z)\sim 0$ and we are done.
In the latter case,
since $qM_{Z_0}$ is integral Weil and the coefficients of $B_{Z_0}$ belong to $\Phi(\mathcal{S})$,
we conclude that there exists a natural number $n$, only depending on $q$ and $\mathcal{S}$,
such that there exists 
\[
\Gamma_{Z_0}\in |-n(K_{Z_0}+B_{Z_0}+\bM {Z_0}.)|,
\]
with $(Z_0, B_{Z_0}+\Gamma_{Z_0}/n,\bM {Z_0}.)$ generalized log canonical \cite{Bir16a}*{Theorem 1.10}.
Since $q$ and $\mathcal{S}$ only depend on $\mathcal{R}$, we conclude that $n$ itself only depends on $\mathcal{R}$.
Hence, it follows that $\Gamma_Z \coloneqq  \phi^*(\Gamma_{Z_0})/qn$ satisfies the property 
\[
qn( K_Z+B_Z+\Gamma_Z ) \sim 0.
\]
We claim that $(Z,B_Z+\Gamma_Z)$ is log canonical.
Indeed, applying the canonical bundle formula for $(Z,B_Z+\Gamma_Z)$ with respect to $Z\rightarrow Z_0$,
we obtain the generalized log canonical pair $(Z_0,B_{Z_0}+\Gamma_{Z_0}/n,\bM {Z_0}.)$.
Therefore, by \cite{Amb99}*{Proposition 3.4}, $(Z,B_Z+\Gamma_Z)$ is log canonical.

\textbf{Case 3.1.b:} The MMP terminates with a MFS mapping to a surface.
\newline
Assume that $Z\rightarrow Z_1$ is a Mori fiber space and $\dim Z_1=2$.
In this case, the existence of a log canonical $n$-complement for $n$ only depending on $\mathcal{R}$ follows from Proposition~\ref{prop:lifting-complements-surfaces}.

\textbf{Case 3.2.a:} The MMP terminates with a good minimal model mapping to a curve.
\newline
Assume that $Z\rightarrow Z_1$ is the morphism defined by the relatively semi-ample divisor $K_Z$ over $Z_0$
and $\dim Z_1=1$.
Then, we have that $Z_1\rightarrow Z_0$ is an isomorphism.
Moreover, we have that $K_Z+B_Z$ and $K_Z$ are $\qq$-trivial over $Z_0$.
First, we will reduce to the case in which the general fiber of the morphism $Z\rightarrow Z_0$ is smooth.
\newline
Assume that the general fiber fo $Z \rar Z_0$ is not smooth.
Then, the pair $(Z,B_Z)$ has a horizontal non-terminal valuation over $Z_0$.
Let $\pi \colon Z''\rightarrow Z$ be a projective birational morphism that only extracts the minimal log discrepancy
of the general fiber of $Z\rightarrow Z_0$. 
Hence, we can write $\pi^*(K_Z+B_Z)=K_{Z''}+B_{Z''}$, where $B_{Z''}$ has a unique horizontal component $E$ whose
coefficient in $B_{Z''}$ is 1 minus the minimal log discrepancy of the log canonical surface $Z_{\eta}$.
Thus, by \cites{Sho91,Ale93}, this coefficient belongs to a fixed set satisfying the descending chain condition.
By the global ascending chain condition~\cite{HMX14}*{Theorem 1.5},
we conclude that the coefficient of $B_{Z''}$ along $E$ belongs to a finite set 
$\mathcal{F} \subset [0,1]$ which only depends on $\dim Z_\eta=2$.
By part (4) of Lemma  \ref{reduction complements}, it suffices to produce an $n$-complement for $K_{Z''}+B_{Z''}$ whose coefficients belongs to $\Phi(\mathcal{R})\cup \mathcal{F}$.
We run a minimal model program for $K_{Z''}+(1-\epsilon)B_{Z''}$ over $Z_0$, 
which terminates with a Mori fiber space $Z^{(3)}\rightarrow Z_2$.
By part (4) of Lemma \ref{reduction complements}, it is enough to find an $n$-complement for the divisor $K_{Z^{(3)}}+B_{Z^{(3)}}$ over ${\rm Spec}(k)$.
If $Z_2$ is a surface, then we conclude by Proposition~\ref{prop:lifting-complements-surfaces}.
If $Z_2$ is a curve, then $Z_2\rightarrow Z_0$ is an isomorphism.
Hence, up to replacing $(Z,B_Z)$ with $(Z^{(3)},B^{(3)})$, we may assume that $Z\rightarrow Z_0$ is of Fano-type, 
and we are in the situation of Case 2.1.a.
So, the claim holds if the general fiber of $Z \rar Z_0$ is not smooth.
\newline
Now, we may assume that the general fiber of $Z\rightarrow Z_0$ is a smooth projective surface
with $K_{X_\eta} \sim_\qq 0$.
Thus, by~\cite{FM00}*{Theorem 4.5}, we know that we can write an effective canonical bundle formula
\[
q(K_Z+B_Z)\sim q(K_{Z_0}+B_{Z_0}+ \bM {Z_0}.),
\]
where $q$ and the Weil index of $\bM T.$ 
only depend on the index of $K_{X_\eta}$ and the second Betti number of the index one cover of $X_{\eta}$.
As in Case 2.2.a, we know that such numbers only depend on the general fiber.
By the classification of smooth surfaces with $\qq$-trivial canonical divisor over an algebraically closed
field of characteristic 0, we know that both the index of the canonical divisor and the second Betti number of the index one cover
can take finitely many possible values.
Hence, the natural number $q$ and the Weil index of $\bM T.$ can take finitely many possible values as well.
Moreover, by \cite{FM00}*{Theorem 4.5}, the coefficients of $B_{Z_0}$ belong to a set $\Phi$ satisfying the descending chain condition
with rational accumulation points.
The fact that the accumulation points of $\Phi$ are rational follows from~\cite{HMX14}*{Theorem 1.11}.
The set $\Phi$ only depends on $\mathcal{R}$.
If $Z_0$ is an elliptic curve, the claim is trivial.
Otherwise, it follows from~\cite{FM00}*{Theorem 4.5}.
By~\cite{FM18}*{Theorem 1.2}, we may find $n$ only depending on $\Phi$ and $q$   
such that there exists 
\[
\Gamma_{Z_0}\in |-n(K_{Z_0}+B_{Z_0}+\bM {Z_0}.)|,
\]
with $(Z_0, B_{Z_0}+\Gamma_{Z_0}/n,\bM.)$ generalized log canonical.
Since $q$ and $\Phi$ only depend on $\mathcal{R}$ we conclude that $n$ itself only depends on $\mathcal{R}$.
Hence, we conclude that $\Gamma_Z \coloneqq  \phi^*(\Gamma_{Z_0})/qn$ satisfies that 
\[
qn( K_Z+B_Z+\Gamma_Z ) \sim 0.
\]
We claim that $(Z,B_Z+\Gamma_Z)$ is log canonical.
Indeed, applying the canonical bundle formula for $(Z,B_Z+\Gamma_Z)$ with respect to $Z\rightarrow Z_0$
we obtain the generalized log canonical pair $(Z_0,B_{Z_0}+\Gamma_{Z_0}/n+M_{Z_0})$.
Hence, by \cite{Amb99}*{Proposition 3.4}, $(Z,B_Z+\Gamma_Z)$ is log canonical.
Thus, the $\qq$-complemented projective log canonical pair $(Z,B_Z)$ has
a log canonical $n$-complement for $n$ only depending on $\mathcal{R}$.

\textbf{Case 3.2.b:} The MMP terminates with a good minimal model mapping to a surface.
\newline
Assume that $Z\rightarrow Z_1$ is the morphism defined by the semi-ample divisor $K_X$ over $Z_0$
and $\dim Z_1=2$.
In this case, the existence of a log canonical $n$-complement for $n$ only depending on $\mathcal{R}$ follows from Proposition~\ref{prop:lifting-complements-surfaces}.
\end{proof}

\section{Effective Koll\'ar's gluing theory}\label{effective-Kollar-gluing}

In this section, we prove a generalization of Koll\'ar's gluing, which will be needed to descend complements onto semi-dlt pairs from their normalization.
In particular, the main statement is the following.

\begin{proposition}\label{prop:effective-kollar-gluing}
Let $(X,B)$ be a semi-dlt pair with $\dim X \leq 2$ and normalization $(X^\nu,B^\nu + D^\nu)$, and let $X\rightarrow T$ be a contraction.
Assume that, for some $n \in \nn$, we have 
$n(K_{X^\nu}+B^\nu + D^\nu)\sim_{T} 0$ and $n(K_{X^\nu}+B^\nu + D^\nu)$ is Cartier. 
Then, there exists $m$, only depending on $n$, such that 
$m(K_X+B)\sim_T 0$.
\end{proposition}

\begin{remark} \label{remark effectiveness missing}
In the setup of Proposition \ref{prop:effective-kollar-gluing}, the existence of $m$ depending on $(X,B)$ follows from \cite{HX16}*{Theorem 1.4}.
Thus, we are left with showing that we can find a uniform bound only depending on $n$.
\end{remark}

In order to address the $\dim X = 2$ case of Proposition \ref{prop:effective-kollar-gluing}, we first state a conjecture and prove it in dimension 1.

\begin{conjecture}\label{conj: bounded-B-birational-representation}
Let $n$ and $d$ be two positive integers.
Let $(X,B)$ be a connected projective log canonical pair of dimension $d$ such that $n(K_X+B)\sim 0$.
Then, there exists $m,N$ depending only on $n,d$ such that, for every positive integer $l$, the action $\rho_{ml} \colon \mathrm{Bir}(X,B) \rightarrow \mathrm{Aut} ( H^0(X, \O X. (m(K_X+B))))$ satisfies $|\rho_{lm}(\mathrm{Bir}(X,B))|\leq N$.
The action $\rho_{ml}$ is induced by pull-back as follows:
if $\phi \in \mathrm{Bir}(X,B)$ and $\gamma \in H^0(X, \O X. (m(K_X+B)))$, then $\rho_{ml} \colon \phi \mapsto (\gamma \mapsto \phi^* \gamma)$.
\end{conjecture}

\begin{remark}
Assume the setup of Conjecture \ref{conj: bounded-B-birational-representation}.
We note that $H^0(X, \O X. (m(K_X+B)))$ is 1-dimensional.
In particular, this implies that $\rho_m(\mathrm{Bir}(X,B))\subset \mu_N$, the group of $N$-th roots of unity. Therefore, Conjecture \ref{conj: bounded-B-birational-representation} is equivalent to the existence of a positive integer $k$ depending only on $n$ and $d$, such that $\rho_k(\mathrm{Bir}(X,B))$ is trivial.
\end{remark}

Now we will show the above in dimension 1.

\begin{proposition} \label{prop b-rep dim 1}
	Conjecture \ref{conj: bounded-B-birational-representation} hold in dimension 1.
\end{proposition}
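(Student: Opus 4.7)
The plan is to reduce $\rho_m$ to a character of the automorphism group and bound its image uniformly in $n$. First I would use $\deg(K_X + B) = 0$ together with $B \geq 0$ and log canonicity to classify the possible pairs: either $X \cong \pp^1$ with $\deg B = 2$ and coefficients in $(0,1]$, or $X$ is an elliptic curve with $B = 0$. Since $X$ is smooth and projective in either case, every B-birational self-map of $(X,B)$ is regular, so $\mathrm{Bir}(X,B)$ coincides with the group of automorphisms of $X$ preserving $B$ as a $\qq$-divisor.

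Next I would set $m = n$, so that $m(K_X+B)$ is a Cartier divisor linearly equivalent to $0$, making $H^0(X, \O X. (m(K_X+B)))$ one-dimensional. The representation $\rho_m$ therefore factors through a character $\chi \colon \mathrm{Bir}(X,B) \to k^\times$. If $s$ generates $H^0(X, \O X. (m(K_X+B)))$, then $s^l$ generates $H^0(X, \O X. (ml(K_X+B)))$ and satisfies $\phi^*(s^l) = \chi(\phi)^l s^l$, so $\rho_{ml} = \chi^l$ and the image of $\rho_{ml}$ is contained in that of $\chi$. Thus it suffices to bound $|\chi(\mathrm{Bir}(X,B))|$ in terms of $n$.

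For the elliptic case, $\mathrm{Aut}(X)$ sits in an extension $1 \to X \to \mathrm{Aut}(X) \to \mathrm{Aut}(X,0) \to 1$ whose component group has order at most $24$, and translations act trivially on $H^0(m K_X)$ since they preserve any invariant differential; hence $|\chi| \leq 24$. For the $\pp^1$ case, write $B = \sum b_i p_i$ with $n b_i \in \nn$ and $\sum b_i = 2$, so $|\Supp(B)| \leq 2n$. If $|\Supp(B)| \geq 3$, then $\mathrm{Aut}(\pp^1, B)$ is a finite subgroup of $\mathrm{PGL}_2$ of order bounded by the size of its action on $\Supp(B)$. If $|\Supp(B)| = 2$, both coefficients equal $1$ and $\mathrm{Aut}(\pp^1, B) \cong \mathbb{G}_m \rtimes \zz/2$; choosing a coordinate $z$ with support at $0$ and $\infty$, the section $(dz/z)^m$ trivializes $m(K_{\pp^1} + B)$, is $\mathbb{G}_m$-invariant, and is mapped to $(-1)^m (dz/z)^m$ by $z \mapsto 1/z$, so $|\chi| \leq 2$.

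The main subtlety will be the subcase $|\Supp(B)| = 2$, where $\mathrm{Aut}(\pp^1, B)$ has positive-dimensional identity component $\mathbb{G}_m$: here the uniform bound on $|\chi|$ relies on the explicit translation-invariance of the logarithmic differential $dz/z$ under scaling, a feature that would fail for a generic trivializing section and that is what makes the B-representation factor through a finite quotient despite the continuous symmetries of the pair.
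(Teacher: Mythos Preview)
Your proof is correct and follows essentially the same case decomposition as the paper: elliptic curve versus $\pp^1$, and within the $\pp^1$ case, $|\Supp(B)|=2$ versus $|\Supp(B)|\geq 3$. The only differences are cosmetic---you take $m=n$ where the paper takes $m=1$ in the cases where $K_X+B$ is already Cartier, you spell out the $(dz/z)^m$ computation where the paper simply cites the residue theorem, and your constant $24$ for the elliptic component group is looser than the paper's $12$ (the sharp bound is actually $6$), but none of this affects the argument.
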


\begin{proof}
In this case, $X$ is either a rational curve or an elliptic curve.
If $X$ is an elliptic curve, then $B=0$. 
In particular, we can take $m=1$, and it is well-known that $N\leq 12$, i.e. by \cite{ast}*{12.2.9.1 Complement}.
\newline
If $X=\mathbb{P}^1$, we have the following two cases:
\begin{itemize}
	\item[(1)] $|\Supp(B)|=2$.
	In this case, we have $B=P+Q$ for some points $P,Q$.
	Then, we can take $m=1$ and $N=2$ by the residue theorem; and
	\item[(2)] $|\Supp(B)|\geq 3$.
	Since $n(\K X. + B) \sim 0$ and $\deg n \K X. = -2n$, it follows that $|\Supp(B)| \leq 2n$.
	Since an automorphism of $\mathbb P ^1$ is determined by 3 points and we are forced to choose them in $|\Supp(B)|$, it follows that $|\mathrm{Bir}(X,B)|=|\mathrm{Aut}(X,B)| \leq \binom{2n}{3}$, which implies the result.
\end{itemize}
This concludes the proof.
\end{proof}

Now we are ready to prove Proposition \ref{prop:effective-kollar-gluing}.

\begin{proof}[Proof of Proposition \ref{prop:effective-kollar-gluing}]
We treat the cases $\dim X =1 $ and $\dim X = 2$ separately.

{\bf Case 1:} In this case, we assume that $\dim X =1$.
\newline
First assume that $\dim T = 0$.
In this case, $X$ is either an elliptic curve, an irreducible rational curve with exactly one nodal singularity, a cycle of smooth rational curves, or a tree of smooth rational curves.
In the former three cases, the claim is trivial, as these are curves of arithmetic genus 1, while in the latter case we may take $n=m$, since $\K X.$ is Cartier at the nodal points and a tree of rational curves does not have torsion line bundles.
\newline
Now, assume that $\dim T =1$.
Since $X$ is not irreducible, some components of $X$ may be contracted by $X \rar T$;
those are addressed as in the projective case $\dim T =0$.
Thus, we may assume that $X \rar T$ is an isomorphism.
As before, we may take $n=m$, as it suffices to make $K_X+B$ Cartier.

{\bf Case 2:} In this case, we assume that $\dim X=2$.
\newline
First, we recall that, by Remark \ref{remark effectiveness missing}, we only need to show that there exists an effective bound for $m$ only depending on $n$.
The effectiveness of the choice will be guaranteed by Proposition \ref{prop b-rep dim 1}, while the structure of the proof remains identical to the one of \cite{HX16}*{Theorem 1.4} (see \cite{HX13}*{\S3}).
Thus, we will address the effectiveness of the choices, while we will only provide a sketch of the use of Koll\'ar's gluing theory and refer to \cite{HX13}*{\S3} for its details. 
\newline
For dimensional reasons, the double locus $D^\nu$ consists of curves.
By Proposition \ref{prop b-rep dim 1}, we may choose $m$ depending only on $n$ such that $\rho_m(\mathrm{Bir}(Z,B_Z))$ is trivial for all $Z$ irreducible components of $D^\nu$ that are vertical over $T$ and hence projective.
Now, define $M \coloneqq m(\K X^\nu. + B^\nu + D^\nu)$.
As we may assume $n|m$, $M$ is Cartier and $M\sim_T 0$.
\newline
Now, we follow the proof in \cite{HX13}*{Proposition 3.1}.
We consider the morphism $f \colon X^\nu \rightarrow T$, and we have that $M  =  m(K_{X^\nu}+B^\nu + D^\nu)\sim_T 0$ is Cartier and relatively free.
Hence, $f$ is the morphism induced by $m(K_{{X}^\nu}+B^\nu + D^\nu)$ over $T$.
Let $H$ be a line bundle on $T$ such that $f^*H \sim m(K_{X^\nu}+B^\nu + D^\nu)$.  
Let $p_X  \colon  X_M\rightarrow X$ and $p_T  \colon  T_H\rightarrow T$ be the total spaces of the line bundles of $M$ and $H$, respectively.
Define $D_M  \coloneqq  p_X^{-1}(D^\nu)$, $Y  \coloneqq  f(D^\nu)$, and $Y_H  \coloneqq  p_T^{-1} (Y)$.
We see that the involution $\tau \colon  D^\nu \rightarrow D^\nu$ induces a set relation on $Y_H\rightarrow T_H$.
Repeating the proof of \cite{HX13}*{Proposition 3.1}, we obtain that the above set relation is a finite set-theoretic equivalence.
The only difference from the proof of \cite{HX13}*{Proposition 3.1} is that we do not need to replace $m$ with a multiple, as it is already guaranteed to trivialize the B-canonical representations of the log canonical centers that are projective over $T$ (the $Z_i$'s in the notation of \cite{HX13}).
Thus, again following the proof of \cite{HX13}*{Proposition 3.1}, by applying \cite{HX13}*{Theorem 3.8},  we see that the quotient $A$ with respect to $Y_H\rightarrow T_H$ exists. 
This implies that there is a line bundle $A$ on $T$ whose pull-back to $X$ is linearly equivalent to $m(K_X+B)$.
In particular, this implies that $m(K_X+B)\sim_{T} 0$.
\end{proof}

\section{Complements for semi-dlt surfaces}\label{sec:complements-sldt}

This section aims to prove some key results about complements for semi-dlt surfaces.
These results will play a crucial role in \S\ref{log-canonical-Fano}.
We start with some remarks that will be useful in the proof of Proposition \ref{prop:log-canonical-dlt-surface-complements-smooth-curves}.

\begin{remark} \label{remark points on curves}
Let $C$ be a smooth curve.
Let $P_1, \ldots ,P_n$ be $n$ closed points on $C$.
Then, for any Cartier divisor $D$ on C, we have $D\sim 0$ in a common neighborhood of $P_1,\ldots,P_n$.
Indeed, let $Q$ be a closed point on $C$ with $Q \neq P_i$ for all $i$.
Then, for sufficiently large $m$, $D+mQ$ is very ample.
Hence, we can find $0\leq R\sim D+mQ$ such that $P_i$ is not in $\Supp(R)$ for all $i$.
Hence, we get $D\sim R-mQ\sim 0$, in a common neighborhood of $P_1,\ldots,P_n$.
Similarly, the same observation holds in the relative case when considering a finite map $f \colon C\rightarrow E $ over another curve $E$, where $E$ is irreducible but not necessarily smooth.
In this case, we choose $Q$ and $R$ away from the fibers of $f$ containing $P_1,\ldots,P_n$.
\end{remark}

\begin{remark} \label{remark Galois}
Let $(Y,D)$ be a log canonical 3-fold and let $(Y',D')$ be a dlt model.
Let $C$ be a 1-dimensional log canonical center on $Y$, and let $C'$ be its normalization, which is a smooth curve.
Assume that $(Y',D')$ has a 1-dimensional log canonical center $E$ mapping onto $C$.
Notice that $E$ is a normal and hence smooth curve.
Then, by part (5) of \cite{Kol13}*{Theorem-Definition 4.45}, the morphism $f \colon E \rightarrow C'$ is Galois and finite. 
Now, define $K_E+D_E \coloneqq (K_{Y'}+D')|_E$ by adjunction.
By part (5) of \cite{Kol13}*{Theorem-Definition 4.45}, the pair $(E,D_E)$ is $\mathrm{Gal}(E/C')$ invariant.
In particular, if we assume that coefficients of $D_E$ belong to $\Phi(\mathcal{R})$, where $\mathcal{R} \subset [0,1]$ is a finite set of rational numbers, then, by the Riemann--Hurwitz formula, there exists $(C',D_{C'})$ such that  $D_{C'}\in \Phi(\mathcal{R})$ and $K_E+D_E = f^*(K_{C'}+D_{C'}) $.
For details on the computation with the Riemann--Hurwitz formula, see \cite{thesis}*{Lemma 4.1.1}.
\end{remark}

By Remark \ref{remark Galois}, we need to prove the existence of semi-dlt relative complements in the following setting, which we will call Condition A.

\begin{definition}[\textbf{Condition A}] \label{definition condition A}
Let $(X,B) \rightarrow S \rightarrow  T$ be surjective morphisms between (not necessarily normal) quasi-projective varieties, and let $\mathcal R \subset [0,1]$ be a finite set of rational numbers.
Assume that $X\rightarrow T$ is a contraction, and let $t\in T$ be a closed point. 
We say that the contraction satisfies {\em Condition A} if the following holds: 
\begin{itemize}
    \item $(X,B)$ is a semi-dlt surface that is $\qq$-complemented over the closed point $t\in T$;
    \item the coefficients of $B$ belong to $\Phi (\mathcal R)$;
    \item $S$ is a possibly reducible semi-normal curve; and
    \item $T$ is either a possibly reducible semi-normal curve, or $T = \lbrace t \rbrace$.
\end{itemize}
Moreover, given any irreducible component $X_1$ of $X$ with the induced pair structure $(X_1,B_1)$, we assume that one of the following occurs:
\begin{enumerate}
	\item $X_1$ is mapped to a closed point $s\in S$ such that $s$ maps to $t$, and $K_{X_1}+B_1 \sim_{\qq} 0$;
	\item $X_1$ is mapped onto a curve $S_1$ in $S$, $S_1$ is mapped to $t$, and $K_{X_1}+B_1\sim_{\qq} f^*A$, where we write $f \colon X_1\rightarrow S_1$, and $-A$ is ample on the projective curve $S_1$; or
	\item $X_1$ is mapped onto a curve $S_1$ in $S$, and $S_1$ is mapped onto T, and $K_{X_1}+B_1\sim_{\qq,S_1} 0$.
\end{enumerate}  
Furthermore, if (2) or (3) occur (that is, $X_1$ is mapped onto a curve $S_1 \subset S$), we assume the following condition:
\begin{itemize}
    \item if $E$ is a component of $\rddown{B_1}$ that dominates $S_1$, then the following holds.
	The morphism $E \rar C$ is a Galois finite morphism, where $C$ is the normalization of $S_1$, $K_E+B_E  \coloneqq  (K_{X_1}+B_1)|_E$ is $\mathrm{Gal}(E/C)$ invariant, and the pair $(E,B_E)$ is only dependent on the choice of such $S_1$ and independent of the choice of $X_1$.
\end{itemize}
\end{definition}

Now we are ready the state the main proposition of the section.
\begin{proposition}\label{prop:log-canonical-dlt-surface-complements-smooth-curves}
	Let $\mathcal{R} \subset [0,1]$ be a finite set of rational numbers.
	Then, there exists a natural number $n$ only depending on $\mathcal{R}$ that satisfies the following.
	Let $X\rightarrow S \rightarrow T$ be a contraction between quasi-projective varieties that satisfies Condition A.
	Then, up to shrinking $T$ around the closed point $t$, we can find
	\[
	\Gamma \sim_{T} -n(K_X+B)
	\]
	such that $(X,B+\Gamma/n)$ is a log canonical pair.
\end{proposition}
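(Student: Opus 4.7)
The plan is to build an $n$-complement on $X$ by first producing component-wise $n_0$-complements on the normalization $\nu\colon X^\nu\to X$ in a way that is compatible along the double locus, and then descending the resulting log Calabi--Yau relation via Proposition \ref{prop:effective-kollar-gluing}. Write $\nu^*(K_X+B)=K_{X^\nu}+B^\nu+D^\nu$, and let $X_1,\dots,X_r$ be the irreducible components of $X^\nu$, with $B_i\coloneqq B^\nu|_{X_i}$ and $D_i\coloneqq D^\nu|_{X_i}$. By assumption each $(X_i,B_i+D_i)$ is a log canonical surface pair, and by Condition A it falls into one of three situations: (1) projective log Calabi--Yau, (2) log Fano relative to a curve $S_1\subset S$ mapping to $t$, or (3) relative log Calabi--Yau over a curve $S_1\to T$.

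In each of the three cases, Theorem \ref{thm:log-canonical-surface-complements} produces, after possibly shrinking $T$ around $t$, a bounded $n_0$-complement $\Gamma_i\ge 0$ for $(X_i,B_i+D_i)$ over $t$, with $n_0$ depending only on $\mathcal R$. The main obstacle is that the individual $\Gamma_i$ need not glue: in order for $\Gamma^\nu\coloneqq\sum\Gamma_i$ to descend to an effective Weil divisor on the demi-normal scheme $X$, the restriction of $\Gamma^\nu$ to every component of $D^\nu$ must be invariant under the gluing involution $\tau$ interchanging the two preimages of each double curve.

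To force this compatibility, I would exploit the last bullet of Condition A together with Remark \ref{remark Galois}. For each curve $S_1\subset S$ dominated by at least one $X_i$, pick a component $E\subset\rddown{B_i}$ dominating $S_1$; by hypothesis the Galois cover $E\to C$ and the $\mathrm{Gal}(E/C)$-invariant pair $(E,B_E)$ descend to a canonically defined pair $(C,B_C)$ on the normalization $C$ of $S_1$, independent of the chosen ambient component $X_i$. I first construct a bounded complement on $(C,B_C)$ using the elementary theory of complements on curves, and then choose each $\Gamma_i$ dominating $S_1$ so that $\Gamma_i|_E$ is the pullback under $E\to C$ of the fixed complement on $(C,B_C)$. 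By Galois invariance this makes $\Gamma^\nu|_{D^\nu}$ automatically $\tau$-invariant on the components of $D^\nu$ that dominate some $S_1\subset S$. For components of $D^\nu$ that are contracted to points of $S$, compatibility reduces to a finite pointwise condition which, after further shrinking $T$ around $t$, can be realized by Remark \ref{remark points on curves} applied to the one-dimensional bases of the relevant contractions. A routine moving argument in $|\,n_0(\K X_i. + B_i + D_i + \Gamma_i)\,|$, combined with base-point freeness after enlarging $n_0$ by a bounded factor, lets us further assume the supports of the various $\Gamma_i$ are transverse to $D^\nu$, so the compatibility is preserved.

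Having produced $\Gamma^\nu$ with $n_0(K_{X^\nu}+B^\nu+D^\nu+\Gamma^\nu/n_0)\sim_T 0$, with $(X^\nu,B^\nu+D^\nu+\Gamma^\nu/n_0)$ log canonical and $\Gamma^\nu|_{D^\nu}$ $\tau$-invariant, the divisor $\Gamma^\nu$ descends to an effective $\Gamma$ on $X$ such that $(X,B+\Gamma/n_0)$ is semi-dlt. Applying Proposition \ref{prop:effective-kollar-gluing} to $(X,B+\Gamma/n_0)\to T$, whose normalization is relatively $\qq$-trivial over $T$ at index $n_0$, yields an integer $m$ depending only on $n_0$, and hence only on $\mathcal R$, with $m(K_X+B+\Gamma/n_0)\sim_T 0$. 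Setting $n\coloneqq m n_0$ and replacing $\Gamma$ by $m\Gamma$ gives $\Gamma\sim_T -n(K_X+B)$ with $(X,B+\Gamma/n)$ log canonical, which is the desired $n$-complement.
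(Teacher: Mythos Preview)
Your overall architecture---build compatible complements on the components of $X^\nu$, then descend via Proposition~\ref{prop:effective-kollar-gluing}---matches the paper. The gap is in how you actually achieve the compatibility.

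You invoke Theorem~\ref{thm:log-canonical-surface-complements} to produce $\Gamma_i$ on each $X_i$, and then say you will ``choose each $\Gamma_i$'' so that its restriction to a horizontal double curve $E$ equals a fixed pullback from $C$. But Theorem~\ref{thm:log-canonical-surface-complements} only outputs \emph{some} complement; it gives no control over the restriction to $E$. The ``routine moving argument'' you add does not repair this: the linear system $|n_0(K_{X_i}+B_i+D_i+\Gamma_i)|$ is a single point (the divisor is linearly trivial), and moving inside $|-n_0(K_{X_i}+B_i+D_i)|$ will in general destroy any restriction condition you have already imposed on another component. The paper does \emph{not} appeal to Theorem~\ref{thm:log-canonical-surface-complements} on these components. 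When $X_i$ dominates a curve $S_1$ and $\lfloor B_i\rfloor$ has a horizontal component $E$, it establishes (via the canonical bundle formula and the Stein factorization $X_i\to E'\to C$) a linear equivalence $q(K_{X_i}+B_i)\sim q(K_C+B_C)|_{X_i}$ for a bounded $q$, and then \emph{defines} the complement on $X_i$ to be the pullback of the chosen complement $R_C$ on $C$. This makes the restriction to $E$ correct by construction, uniformly across all components over the same $S_1$. Log canonicity of $(X_i,B_i+R_C|_{X_i})$ is then not automatic and requires Lemma~\ref{lemma Riemann--Hurwitz} followed by Lemma~\ref{lemma connectedness}, neither of which you invoke. (When $\lfloor B_i\rfloor$ has no horizontal component, there is no horizontal double curve through $X_i$ and the paper uses the canonical bundle formula directly to $S_1$.)

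For double curves $E$ contracted to a point of $S$, your appeal to Remark~\ref{remark points on curves} is misplaced: that remark concerns local triviality of line bundles on a curve near finitely many points, not matching of effective divisors. The paper's argument here is much simpler: since $(K_{X_i}+B_i)|_E=K_E+B_E\sim_\qq 0$ and each $\Gamma_i|_E$ is effective and $\qq$-linearly trivial, both restrictions are identically zero, so they agree automatically.
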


\begin{proof}
We only prove the case in which $T$ is a semi-normal curve.
The case in which $T$ is a point is analogous. 
\newline
We split the proof into two steps.
First, we show how to create complements on each component of $X_i$, and then show that they can be glued together to form a global complement.
We refer to the notation introduced in Definition \ref{definition condition A}.
In particular, we consider each case of Condition A separately.
To refer to each case, we use the same numbering as in Condition A.

\textbf{Step 1:}
In this step, we prove the existence of an $n$-complement on each component.
\begin{enumerate}
	\item Assuming $X_1$ is mapped to $s\in S$, then we have $K_{X_1}+B_1\sim_{\qq} 0$.
	Hence, by Theorem \ref{thm:log-canonical-surface-complements}, there exists a bounded $n$, only depending on $\mathcal{R}$, such that $n(K_{X_1}+B_1)\sim 0$.
	In particular, the complement is trivial.
	Also, we note that any complement of $X_1$ will be trivial on any irreducible component of $\lfloor B_1 \rfloor$.

		\item In this case, we apply the canonical bundle formula.
	    Notice that, in this case, the curve $S_1$ is projective.
		Therefore, we will consider global complements.
		We split into two further subcases for gluing.
		Indeed, the construction of the complement is specific to each subcase, and the different constructions need to be distinguished when the gluing is performed in Step 2.
		\begin{enumerate}
			\item First, assume that $\rddown{B_1}$ does not contain any horizontal component mapping onto $S_1$.
			In general, the morphism $f \colon X_1 \rar S_1$ does not have connected fibers.
			Thus, we replace $S_1$ with the image of the Stein factorization of $f$.
			Notice that this process preserves the properties in (2) of Condition A.
			By abusing notation, we still denote the image of the Stein factorization by $S_1$.
			Thus, we may assume that $f \colon X_1 \rar S_1$ is a contraction.
			Notice that this also implies that $S_1$ is normal.
			Then, we may apply Theorem \ref{effective cbf}, which guarantees the existence of a positive integer $q$, depending only on $\mathcal{R}$, such that
			\[
			q(K_{X_1}+B_1)\sim qf^*(K_{S_1}+B_{S_1}+M_{S_1}).
			\]
			Furthermore, again by Theorem \ref{effective cbf}, $qM_{S_1}$ is Cartier and the coefficients of $B_{S_1}$ belong to $\Phi(\mathcal{S})$, where $\mathcal{S} \subset [0,1]$ is a finite set of rational numbers only depending on $\mathcal{R}$.
			Now, since $-(K_{S_1}+B_{S_1}+M_{S_1})$ is ample and $M_{S_1}$ is nef, we conclude that $S_1$ is rational curve.
			Hence, there exists a $R_{S_1}\geq 0$ such that \[
		q(K_{S_1}+B_{S_1}+R_{S_1}+M_{S_1})\sim 0,
			\]
			possibly after replacing $q$ with a bounded multiple.
			We pull back $R_{S_1}$ via $f$ and define $R_1  \coloneqq  f^* R_{S_1}$.
			Then, we get
			\[
			q(K_{X_1}+B_1+R_1)\sim 0.
			\]
			Furthermore, it is clear that  $(X_1,B_1+R_1)$ is log canonical from  the canonical bundle formula.\newline
			
			\item Now, assume that $D$ is a component in $\rddown{B_1}$ mapping onto $S_1$.
			
			Let $C$ be the normalization of $S_1$.
			By assumption, the morphism $D\rightarrow C$ is Galois.
			Notice that both $D$ and $C$ are smooth curves.
			Let $K_D+B_D  \coloneqq  (K_{X_1}+B_1)|_D$, where $B_D\in \Phi(\mathcal{S})$ and $\mathcal{S} \subset [0,1]$ is a finite subset of the rational numbers depending only on $\mathcal{R}$.
			By Condition A and Remark \ref{remark Galois}, we see that there exists $B_C\in \Phi(\mathcal{S})$, such that $K_D+B_D = (K_C+B_C)|_D$.

			Furthermore, we claim that there exists a bounded $q$ such that $q(K_{X_1}+B_1)\sim q(K_C+B_C)|_{X_1}$.
			Here, as usual, the restriction to $X_1$ means the pull-back of the divisor to the corresponding component.
			Indeed, let $X_1\rightarrow E\rightarrow C$ be the Stein factorization.
			Then, by Theorem \ref{effective cbf}, there exists a bounded $q$ such that
			\[
			q(K_{X_1}+B_1)\sim qL= q(K_E+B_E+M_E)|_{X_1},
			\]
			where $B_E$ and $M_E$ are the discriminant and the moduli parts of the canonical bundle formula, respectively, and $L$ is a vertical divisor over $E$.
			Note that $L$ is $\qq$-equivalent to the pull-back of $A$.
			Furthermore, by replacing $\mathcal{S}$ and $q$, we may assume that $B_E\in \Phi(\mathcal{S})$, and $qM_E$ is Cartier.
			Notice that, by applying \cite{Fuj00}*{Proposition 2.1}, we see that $D\rightarrow E$ has either degree 1 or degree 2.
			Then we have that $q(K_D+B_D)\sim qL|_S = q(K_E+B_E+M_E)|_D$.
			Hence, we derive that $q(K_E+B_E+M_E)|_D \sim q(K_C+B_C)|_D$.
			Now, all curves here are rational curves.
			Therefore, we see that by replacing $q$ by $2q$, we have $q(K_E+B_E+M_E) 
		\sim q(K_C+B_C)|_E$.
		This will prove the claim.
		
			We note that such $K_C+B_C$ is determined independently of the choice of $S_1$ by \cite{Kol13}*{Theorem 4.45 (5)}.
			Hence, since $K_C+B_C$ is anti-ample, there exists a $R_C\geq 0$ such that $q(K_C+B_C+R_C)\sim 0$.
			Letting $R_1  \coloneqq  R_C|_{X_1}$, we see that $q(K_{X_1}+B_1+R_1)\sim 0$, possibly after replacing $q$ by a bounded multiple.
			However, we still need to show that $(X_1,B_1+R_1)$ is log canonical.
			By Lemma \ref{lemma Riemann--Hurwitz}, we can show that, possibly by replacing $q$, we can assume that $(D,B_D+R_D)$ is log canonical, where $R_D  \coloneqq  R_C|_D$.
			Then, we are done by Lemma \ref{lemma connectedness}.
		\end{enumerate} 
	\item This case is similar to the previous one.
	As in (2), we split the discussion into two further subcases.
	\begin{enumerate}
		\item First, assume that $\rddown{B_1}$ does not contain any horizontal component mapping onto $S_1$.
		
		Since the irreducible component of $T$ dominated by $S_1$ is not necessarily normal, we cannot apply directly Theorem \ref{thm:log-canonical-surface-complements}.
		By Theorem \ref{effective cbf}, there exists a positive integer $q$, depending only on $\mathcal{R}$, such that $q(K_{X_1}+B_1)\sim q(K_{S_1}+B_{S_1}+M_{S_1})|_{X_1}$, where we possibly replace $S_1$ by its normalization and the Stein factorization of $X_1 \rightarrow S_1$.
		Furthermore, $qM_{S_1}$ is Cartier and the coefficients of $B_{S_1}$ belong to $\Phi(\mathcal{S})$, where $\mathcal{S} \subset [0,1]$ is a finite set depending only on $\mathcal{R}$.
		Now, let $\lbrace s_1, \ldots, s_n \rbrace$ be the preimage of $t\in T$ in $S_1$.
		We can define
		\[
		R_{S_1}  \coloneqq  (1-\mult_{s_1}(B_{S_1}))s_1+\dots+(1-\mult_{s_n}(B_{S_1}))s_n.
		\]
		By Remark \ref{remark points on curves}, we see that $q(K_{S_1}+B_{S_1}+R_{S_1}+M_{S_1})\sim 0$ over a neighborhood of $t$.
		Hence, if we let $R_1  \coloneqq  R_{S_1}|_{X_1}$, we get $q(K_{X_1}+B_1+R_1)\sim 0$ over a neighborhood of $t$.
		That is, $\mathcal{O}_{X_1}(q(K_{X_1}+B_1+R_1))\cong f^*\mathcal{O}_T $ over a neighborhood of $t$.
		Furthermore, as $(S_1,B_1+R_1)$ is log canonical, by \cite{Amb99}*{Proposition 3.4}, then so is $(X_1,B_1+R_1)$.
		
		\item Now, assume that $D$ is a component in $\rddown{B_1}$ mapping onto $S_1$.
		
		Let $C$ be the normalization of $S_1$.
		By assumption, we have that $D\rightarrow C$ is Galois.
		Notice that both $D$ and $C$ are smooth curves.
		Let $K_D+B_D  \coloneqq  (K_{X_1}+B_1)|_D$, where $B_D\in \Phi(\mathcal{S})$ and $\mathcal{S} \subset [0,1]$ is a finite subset of the rational numbers depending only on $\mathcal{R}$.
		By Condition A and Remark \ref{remark Galois}, we see that there exists $B_C\in \Phi(\mathcal{S})$, such that $K_D+B_D = (K_C+B_C)|_D$.
		Arguing as in (2.b), there exists a bounded $q$ such that
		\[
		q(K_{X_1}+B_1)\sim q(K_C+B_C)|_{X_1}.
		\]
		We note that, by the assumptions,
		
		$K_D+B_D$ is $\mathrm{Gal}(D/C)$ invariant, and the pair $(D,B_D)$ is (up to B-birational automorphism) only dependent on the choice of such $S_1$ and independent of the choice of $X_1$.
		As B-birational automorphisms of a curve preserve $B_D$ (and not only its support) the indeterminacy up to B-birational automorphism does not affect the Riemann--Hurwitz formula. Moreover, it follows that $K_C+B_C$ is determined independently of the choice of $X_1$.
		Hence, we can define $R_C  \coloneqq  (1-\mult_{c_1}(B_{C}))c_1+\dots+(1-\mult_{c_n}(B_{C}))c_k \geq 0$, where $\lbrace c_1,\ldots c_k \rbrace$ is the preimage of $t$ on $C$.
		Then, possibly shrinking around $t$, by Remark \ref{remark points on curves}, it follows that $q(K_C+B_C+R_C)\sim 0$, where $R_1  \coloneqq  R_C|_{X_1}$.
		Then, we see that $q(K_{X_1}+B_1+R_1)\sim 0$, possibly after replacing $q$ by a bounded multiple and shrinking around $t$. 
		Furthermore, by Lemma \ref{lemma connectedness} and Lemma \ref{lemma Riemann--Hurwitz}, we see that $(X_1,B_1+R_1)$ is log canonical. 
	\end{enumerate}
	\end{enumerate}
	
\textbf{Step 2:} In this step, we glue the complements constructed in Step 1.

By definition of complements, up to shrinking around $t$, each complement $R_1$ constructed in Step 1 is such that $\mathcal{O}_{X_1}(n(K_{X_1}+B_1+R_1))\sim f^*\mathcal{O}_T$, i.e., each $n(\K X_1. + B_1 + R_1)$ is linearly equivalent to the pull-back of the structure sheaf on $T$.
\newline
Now, we claim that, if $X_1$ and $X_2$ are two irreducible components and $E$ is a component of $X_1 \cap X_2$, the complements $R_1$ and $R_2$ constructed on $X_1$ and $X_2$ respectively agree along $E$.
First, notice that, as $X$ is semi-dlt, $X_1$ and $X_2$ are necessarily distinct irreducible components.
Then, as the question is local over $t \in T$, we may assume that either $E$ is projective and is mapped to $t$ or it dominates an irreducible component of $T$.
Now, if $E$ is mapped to a closed point $s \in S$, Condition A forces any complement to be trivial along $E$, and the claim follows trivially.
On the other hand, if $E$ is mapped onto $S_1$, an irreducible component of $S$, it follows from the construction that $R_1|_E = R_2|_E$, since they are both pull-backs of the same well-defined divisor on $C$, independent of $X_i$, by considering the finite Galois map to $E\rightarrow C$, where $C$ is the normalization of $S_1$.
\newline
Let $(X^\nu,B^\nu+D^\nu)$ be the normalization of $(X,B)$, where $D^\nu$ denotes the conductor, and define $R^\nu$ on $X^\nu$ to be such that $R^\nu |_{X_i} = B_i+R_i$, where $B_i+R_i$ is the complement constructed on $X_i$ in Step 1.
By definition, we have $n(K_{X^\nu}+B^\nu+D^\nu+R^\nu)\sim 0$ over a neighborhood of $t \in T$.
By \cite{Kol13}*{Theorem 5.39}, it follows that $R$ is $\qq$-Cartier, where $R$ denotes the push-forward of $R^\nu$ to $X$. Therefore, $(X,B+R)$ is a semi-log canonical pair.
Now, by Proposition \ref{prop:effective-kollar-gluing}, there is a bounded $m$, depending only on $\mathcal{R}$ (via $n$), such that $m(K_X+B+R)\sim 0$ over $t$. 
\end{proof}

\begin{lemma} \label{lemma connectedness}
Let $X\rightarrow S$ be a contraction from a normal surface to a smooth curve $S$.
Let $(X,B)$ be a dlt pair such that $K_X+B\sim_{\qq,S} 0$.
Let $E$ be an irreducible component of $B^{=1}$ that is horizontal over $S$.
Let $(E,B_E)$ be defined by $K_E+B_E \coloneqq (K_X+B)|_E$.
Let $R_S\geq 0$ be a $\qq$-divisor on $S$, $R  \coloneqq  R_S|_X$ be its pull-back on $X$, and $R_E  \coloneqq  R_S|_E$.
Further assume that $(E,B_E+R_E)$ is log canonical.
Then, $(X,B+R)$ is log canonical.
\end{lemma}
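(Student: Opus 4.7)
The plan is to reduce to a local question at each special fibre $F_s:=f^{-1}(s)$ for $s\in\Supp R_S$, use inversion of adjunction along $E$ to establish log canonicity in a Zariski neighbourhood of $E$, and then finish by a connectedness argument in the fibre to extend log canonicity to the whole of $F_s$, after perturbing to achieve relative ampleness of the anti-log-canonical class.

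Since $R=f^*R_S$ is effective and supported on $\bigcup_{s\in\Supp R_S}F_s$, off this locus one has $(X,B+R)=(X,B)$, which is dlt and hence log canonical. Fix $s\in\Supp R_S$ with $r:=\mult_s R_S$; near $F_s$ the divisor $R$ agrees with $rF_s$. Because $E$ is horizontal while $R$ is vertical, $R$ does not contain $E$, so $E\subset\lfloor B+R\rfloor$ and
\[
\mathrm{Diff}_E\bigl((B+R)-E\bigr)=\mathrm{Diff}_E(B-E)+R|_E=B_E+R_E.
\]
Koll\'ar's inversion of adjunction, applied to the log canonical place $E$, together with the hypothesis that $(E,B_E+R_E)$ is log canonical, implies that $(X,B+R)$ is log canonical on some Zariski open neighbourhood $U$ of $E$. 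Since $E$ is horizontal, $U\cap F_s\ne\emptyset$ for every $s$, so it remains to verify log canonicity at points of $F_s\setminus U$.

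Suppose towards contradiction that $(X,B+R)$ fails to be log canonical at some $p\in F_s\setminus U$. Then $\mathrm{Nlc}(X,B+R)\cap F_s$ is a non-empty closed subset of $F_s$ disjoint from $E\cap F_s$. To invoke the Shokurov-Koll\'ar connectedness principle I perturb the pair: let $A$ be a general effective ample $\qq$-divisor on $X$ avoiding the finitely many non-klt centres of $(X,B+R)$ over $s$, and let $\epsilon>0$ be a small rational number. Writing $K_X+B+R\sim_\qq f^*D$ for some $\qq$-divisor $D$ on $S$, the divisor
\[
-(K_X+B+R-\epsilon A)=\epsilon A-f^*D
\]
is $f$-ample, hence $f$-nef and $f$-big. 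For $A$ general and $\epsilon$ small, the non-klt and non-lc loci of $(X,B+R-\epsilon A)$ over $s$ coincide with those of $(X,B+R)$; in particular $E$ remains a log canonical centre near which the perturbed pair is still log canonical, and $p$ is still in $\mathrm{Nlc}$. The sharpened form of connectedness, which follows from Fujino's vanishing $R^{1}f_{*}\mathcal{J}_{\mathrm{NLC}}(X,B+R-\epsilon A)=0$, asserts that when the pair possesses a log canonical centre dominating $s$ near which it is log canonical, the non-lc locus in the fibre is empty. Applied to the centre $E$, this contradicts $p\in\mathrm{Nlc}(X,B+R-\epsilon A)\cap F_s$.

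The principal obstacle is the connectedness step. The bare Shokurov-Koll\'ar theorem requires $-(K+\Delta)$ to be $f$-nef and $f$-big, while we only have $f$-numerical triviality; the perturbation by $-\epsilon A$ restores $f$-ampleness, and its genericity is what ensures the relevant non-klt and non-lc centres over $s$ are preserved unchanged. The emptiness of the non-lc locus in the fibre, rather than merely its connectedness, is the content of Fujino's non-lc vanishing theorem combined with the existence of a log canonical centre where the pair is lc. In our two-dimensional setting this is particularly clean, since $F_s$ is one-dimensional and the non-klt and non-lc loci in the fibre are supported in dimension at most one.
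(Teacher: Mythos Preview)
Your argument has a genuine gap at the connectedness step, and the perturbation you choose creates an additional difficulty.

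First, the perturbation $B+R-\epsilon A$ produces a boundary that is not effective (a general ample $A$ is not contained in $\Supp(B+R)$), so you are working with a sub-pair. Fujino's non-lc ideal vanishing and the standard connectedness results are stated for pairs with effective boundary; you would need to justify that they apply here, and you do not.

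Second, and more seriously, the ``sharpened form of connectedness'' you invoke is not a consequence of the vanishing $R^{1}f_{*}\mathcal{J}_{\mathrm{NLC}}=0$. That vanishing, via the surjection $f_{*}\mathcal{O}_{X}\twoheadrightarrow f_{*}\mathcal{O}_{\mathrm{Nlc}}$, only gives that the non-lc locus is \emph{connected} over $s$, not that it is empty. There is no mechanism in your argument forcing the non-lc locus to meet $E$; the non-klt locus could well be connected while containing both the lc centre $E$ and a distant non-lc piece, linked through intermediate vertical lc centres in $F_s$.

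The paper's proof closes this gap with an observation you are missing: since $\mult_{s}R_{S}>0$ and $(E,B_{E}+R_{E})$ is log canonical, the pair $(E,B_{E})$ is \emph{klt} at the points of $E$ over $s$. By inversion of adjunction this forces $(X,B)$ to be \emph{plt} near $E\cap F_{s}$, so $E$ is the only non-klt centre there. The paper then perturbs differently, replacing $R$ by $aR$ for $a<1$ close to $1$: the boundary stays effective, $-(K_{X}+B+aR)\sim_{\qq,S}0$ is relatively nef, $(X,B+aR)$ remains plt near $E\cap F_{s}$, and the assumed non-lc centre $Z$ of $(X,B+R)$ persists as a non-klt centre of $(X,B+aR)$ disjoint from $E$. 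Thus $\mathrm{Nklt}(X,B+aR)$ is genuinely \emph{disconnected} over $s$, and the Hacon--Han connectedness principle (which only requires relative nefness, not bigness) forces $(X,B+aR)$ to be plt near $F_{s}$, contradicting the presence of $Z$. The plt-near-$E$ step is the missing idea in your approach.
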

\begin{proof}
This is a local question, hence we can work locally over $s\in S$.
Furthermore, we may assume that $\mult_s R_S >0$, as the conclusion is trivial over $s$ otherwise.
To derive a contradiction, we can assume that $(X,B+R)$ is not log canonical near fiber over $s$, i.e., there exists a vertical non-klt center $Z \subset X_s$ mapping to $s$ that is not a log canonical center.
Since $(E,B_E+R_E)$ is log canonical, by inversion of adjunction, $(X,B+R)$ is log canonical near a neighborhood of $E$.
Now, since $\mult_s (R_S) >0$ and $(E,B_E+R_E)$ is log canonical, $(E,B_E)$ is klt near $X_s \cap E$.
Therefore, $(X,B)$ is plt near $X_s \cap E$.
Thus, by considering $(X,B+aR)$ for some $a<1$ very close to 1, we see that $(X,B+aR)$ is plt near $X_s \cap E$ while $Z$ is still a non-klt center of $(X,B+aR)$.
Therefore, $\mathrm{Nklt} (X,B+aR)$ is disconnected over $s$.
Indeed, $Z$ is disjoint from $E$ over $s$.
Hence, by \cite{FS20}*{Theorem 1.2}, we see that $(X,B+aR)$ is plt near the fiber over $s$, which is a contradiction.
\end{proof}

\begin{remark}
We note that the above lemma also works in the local case near $s\in S$ since the proof is local.
\end{remark}

\begin{lemma} \label{lemma Riemann--Hurwitz}
Let $f \colon X\rightarrow Y$ be a finite Galois morphism between smooth rational curves.
Let $\mathcal{R} \subset [0,1]$ be a finite set of rational numbers.
Assume that $(X,B)$ is log canonical, $B\in \Phi(\mathcal{R})$, $-(K_X+B)$ is ample, and $(X,B)$ is $\mathrm{Gal}(X/Y)$ invariant.
Then, by the Riemann--Hurwitz formula, we may write $K_X+B=f^*(K_Y+B_Y)$, where the coefficients of $B_Y$ belong to $\Phi(\mathcal{R})$.
Furthermore, for any $\qq$-divisor $R\geq 0$ on $Y$ such that $(Y,B_Y+R)$ is log canonical, then so is $(X,B+f^*R)$.
\end{lemma}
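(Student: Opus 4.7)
The plan is to reduce the claim to a pointwise coefficient inequality on the smooth curve $X$ and verify it directly via the Riemann--Hurwitz formula. Since $X$ and $Y$ are smooth curves, a pair $(X,\Delta)$ on $X$ (resp.\ on $Y$) is log canonical if and only if every coefficient of $\Delta$ is at most $1$. Thus it suffices to check that $\mult_P(B+f^*R) \leq 1$ for every closed point $P \in X$, while the hypothesis that $(Y,B_Y+R)$ is log canonical translates into $\mult_Q(B_Y+R) \leq 1$ for every $Q \in Y$.

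First, I would fix $P \in X$, set $Q \coloneqq f(P)$, and let $e$ denote the ramification index of $f$ at $P$. Because $f$ is Galois, $\mathrm{Gal}(X/Y)$ acts transitively on $f^{-1}(Q)$, so $e$ depends only on $Q$ and not on the chosen preimage. By the Riemann--Hurwitz formula,
\[
K_X = f^*K_Y + \sum_{P' \in X}(e_{P'}-1)P',
\]
and combining this with the relation $K_X+B = f^*(K_Y+B_Y)$ forces the local identity $\mult_P(B) = e\cdot\mult_Q(B_Y) - (e-1)$. On the other hand, the usual pull-back formula on smooth curves gives $\mult_P(f^*R) = e\cdot\mult_Q(R)$. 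Adding these two identities yields
\[
\mult_P(B+f^*R) = e\cdot\mult_Q(B_Y+R) - (e-1).
\]

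From the assumed log canonicity of $(Y,B_Y+R)$ we have $\mult_Q(B_Y+R)\leq 1$, and therefore $\mult_P(B+f^*R) \leq e - (e-1) = 1$. Effectivity is automatic, since $B\geq 0$ (from $(X,B)$ being log canonical with $B$ a boundary) and $f^*R \geq 0$; consequently every coefficient of $B+f^*R$ lies in $[0,1]$, proving that $(X,B+f^*R)$ is log canonical.

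I do not anticipate any substantial obstacle: the whole argument is a local coefficient computation once the Riemann--Hurwitz formula and the Galois condition (which guarantees a well-defined common ramification index above each $Q$) are in place. The additional hypotheses in the statement (ampleness of $-(K_X+B)$ and $B \in \Phi(\mathcal{R})$) play no role in this particular lemma; they are inherited from the context in which the lemma is invoked within \S \ref{sec:complements-sldt}.
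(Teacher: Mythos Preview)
Your proof is correct and follows essentially the same approach as the paper: both reduce the claim to a local coefficient computation using the Riemann--Hurwitz formula together with the Galois assumption (which ensures a well-defined ramification index above each point of $Y$). Your write-up is simply a more explicit version of the computation the paper leaves to the reader.
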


\begin{proof}
See \cite{thesis}*{proof of Lemma 4.1.1}.
\end{proof}

\section{Complements for relative log Fano 3-folds}\label{log-canonical-Fano}

In this section, we prove the statement of Theorem~\ref{main-theorem-hyper}
in the relative log Fano case.
In particular, we prove the following statement.

\begin{proposition}\label{prop:log-canonical-Fano}
Let $\mathcal{R} \subset [0,1]$ be a finite set of rational numbers.
There exists a natural number $n$ only depending on $\mathcal{R}$ that satisfies the following.
Let $X\rightarrow T$ be a contraction between normal quasi-projective varieties 
such that the log canonical $3$-fold $(X,B)$ is log Fano over $t\in T$ and the coefficients of $B$ belong to $\Phi(\mathcal{R})$. 
Then, up to shrinking $T$ around $t$, we can find
\[
\Gamma \sim_{T} -n(K_X+B),
\]
such that $(X,B+\Gamma/n)$ is a log canonical pair.
\end{proposition}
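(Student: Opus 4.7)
The case $T = \Spec(k)$ is covered by \cite[Theorem 1.2]{Xu19a}, so assume $\dim(T) \geq 1$. The plan is to reduce to the case of a $\qq$-factorial dlt pair with a log canonical center over $t$, produce a complement on $S := \lfloor B \rfloor$ via the semi-dlt surface theory of \S\ref{sec:complements-sldt}, and lift the complement via relative Kawamata--Viehweg vanishing. First, using Remarks \ref{rmk generic point} and \ref{rmk lcc}, reduce to the case in which $t \in T$ is a closed point and $(X,B)$ admits a log canonical place whose center on $T$ is $t$. Replacing $(X,B)$ by a $\qq$-factorial dlt modification and running a $-(\K X. + B)$-trivial MMP as in Remark \ref{reduction complements}, I may further assume that $(X,B)$ is $\qq$-factorial dlt with $-(\K X. + B)$ ample over $t$, and that $S$ has a component contracted to $t$.

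Next, apply adjunction to obtain a semi-dlt surface pair $(S, B_S)$ with $\K S. + B_S = (\K X. + B)|_S$; by \cite[Lemma 3.3]{Bir16a}, the coefficients of $B_S$ lie in a hyperstandard set $\Phi(\mathcal{S})$ depending only on $\mathcal{R}$. Taking the Stein factorization $S \to C \to T$ of $f|_S$ and shrinking $T$ around $t$, the plan is to arrange the diagram $(S, B_S) \to C \to T$ so that it satisfies Condition A of \S\ref{sec:complements-sldt}: each irreducible component of $S$ either contracts to a point of $C$, contracts to a curve of $C$ mapping to $t$, or dominates a curve of $C$ mapping onto $T$, and the required $\qq$-triviality or anti-ampleness in each case is inherited from the Fano property of $(X,B)$ over $t$. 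The Galois condition on horizontal components of $\lfloor B_{X_1} \rfloor$, together with the independence of the resulting source pair $(E, B_E)$ from the component $X_1 \subset S$ through which it is accessed, is exactly the content of Kollár's theory of sources and springs of log canonical centers (cf.\ Remark \ref{remark Galois} and \cite[Theorem--Definition 4.45]{Kol13}). Proposition \ref{prop:log-canonical-dlt-surface-complements-smooth-curves} then produces an integer $n$ depending only on $\mathcal{R}$ and an effective $\Gamma_S \sim_T -n(\K S. + B_S)$ such that $(S, B_S + \Gamma_S/n)$ is semi-log canonical.

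The lifting step uses the identity (valid because $B = S + \{B\}$)
\[
-n(\K X. + B) - S \sim_\qq \K X. + \{B\} + (n+1)\bigl(-(\K X. + B)\bigr).
\]
Since $(X, \{B\})$ is klt and $-(\K X. + B)$ is ample over $t$, relative Kawamata--Viehweg vanishing yields $R^1 f_* \mathcal{O}_X(-n(\K X. + B) - S) = 0$ near $t$. Hence the restriction map
\[
f_*\mathcal{O}_X(-n(\K X. + B)) \twoheadrightarrow f_*\mathcal{O}_S(-n(\K S. + B_S))
\]
is surjective in a neighborhood of $t$, so a section cutting out $\Gamma_S$ lifts to one whose divisor $\Gamma$ satisfies $\Gamma \sim_T -n(\K X. + B)$ and $\Gamma|_S = \Gamma_S$. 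A general choice of lift (exploiting the freedom in the kernel of the restriction) ensures no component of $S$ lies in $\Supp(\Gamma)$, and inversion of adjunction applied to each component of $S$ then shows that $(X, B + \Gamma/n)$ is log canonical near $S$; away from $S$ one uses that $(X,\{B\})$ is klt together with Shokurov--Kollár connectedness for $-(\K X. + B + \Gamma/n) \sim_T 0$ to rule out any new log canonical center.

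The main obstacle is the verification of Condition A after the above reductions: every irreducible component of $S$ must be certified to fall into one of its three permitted configurations, and the source pair $(E, B_E)$ along a horizontal component must be genuinely well-defined independently of the ambient component of $S$. This rests on Kollár's gluing theory and on careful tracking of the relative Fano condition through the adjunction step; once this is secured, the application of Proposition \ref{prop:log-canonical-dlt-surface-complements-smooth-curves} and the Kawamata--Viehweg lifting are essentially formal.
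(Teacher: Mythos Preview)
Your overall strategy---complement the reduced boundary $S$ via the semi-dlt surface theory, lift via vanishing, and conclude by inversion of adjunction plus connectedness---is exactly the paper's. But two steps of your execution do not go through as written.

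First, the reduction you claim, to $(X,B)$ $\qq$-factorial dlt with $-(K_X+B)$ \emph{ample} over $t$, is in general impossible. After a dlt modification $\pi\colon X'\to X$ one has $-(K_{X'}+B')=\pi^*(-(K_X+B))$, which is only nef and big; no ``$-(K_X+B)$-trivial MMP'' restores ampleness without undoing the dlt modification. The paper keeps three models in play simultaneously: the original $X$ (ampleness), the dlt model $X'$ (adjunction to $S'$, contraction $S'\to T'$), and a log resolution $X''$ (the actual lifting).

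Second, and more seriously, your lifting step treats $-n(K_X+B)$ as a Cartier, or at least integral Weil, divisor. Since $\Phi(\mathcal R)$ contains $1-\tfrac{1}{m}$ for every $m$, no fixed $n$ clears the denominators of $B$, so $\mathcal O_X(-n(K_X+B))$ is not a line bundle and the restriction short exact sequence you invoke does not exist. This is precisely why the paper passes to the log resolution $X''$ and works with the \emph{integral} divisor
\[
L'' \;=\; -nK_{X''}-nW''-\lfloor (n+1)\Delta''\rfloor,
\]
where $B''=W''+\Delta''$ with $W''=\lfloor (B'')^{\ge 0}\rfloor$. On $X''$ the trace $B''$ has negative coefficients, so plain Kawamata--Viehweg is unavailable; the surjectivity of the restriction is obtained instead from Koll\'ar's injectivity theorem (Theorem~\ref{kollar-injectivity}), using Lemma~\ref{decomposition-of-N'} to exhibit $N''$ as $A''+D''$ with $A''$ ample and $D''$ effective and harmless along the relevant log canonical centers. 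The mismatch between $L''$ and $-n(K_{X''}+B'')$ is then absorbed via Proposition~\ref{prop arithmetic}, which gives $\lfloor (n+1)\Delta'\rfloor - n\Delta'\ge 0$ once $I(\mathcal R)\mid n$. Your connectedness argument at the end also needs this extra input: since $K_{X'}+B'+R'\sim_{\qq,T}0$, one must perturb by $(1-\alpha)R'$ and by small multiples of $A'$, $D'$, $\lfloor\pi^{-1}_*B\rfloor$ to land in a situation where \cite[Lemma~2.14]{Bir16a} applies; this is Step~10 of the paper's proof.
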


In order to prove Proposition \ref{prop:log-canonical-Fano}, we will apply a version of Koll\'ar's injectivity theorem.
For the reader's convenience, we recall its statement, due to Fujino \cite{Fuj17}.

\begin{theorem}[see {\cite{Fuj17}*{Theorem 2.12}}] \label{kollar-injectivity}
	Let $(X,\Gamma)$ be a log smooth pair with ${\rm coeff}(\Gamma)\subset [0,1]$. 
	Let $\phi \colon X\rightarrow T$ be a proper morphism between schemes. 
	Let $\epsilon$ be a positive rational number.
	Let $L$ be a Cartier divisor on $X$.
	Let $S$ be an effective Cartier divisor on $X$ that does not contain any log canonical center of $(X,\Gamma)$.
	Assume that \begin{itemize}
		\item $L\sim_{\qq,T}K_X+\Gamma+N$;
		\item $N$ is a $\qq$-divisor that is semi-ample over $T$; and
		\item $\epsilon N\sim_{\qq,T}S+\overline{S}$, where $\overline{S}$ is an effective $\qq$-Cartier $\qq$-divisor that does not contain any log canonical center of $(X,\Gamma)$ in its support.
	\end{itemize}
Then the natural map $$R^q\phi_*(\mathcal{O}_X(L))\rightarrow R^q\phi_*(\mathcal{O}_X(L+S))$$ is injective for every $q$.
\end{theorem}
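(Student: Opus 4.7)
The plan is to establish the injectivity by the Koll\'ar--Esnault--Viehweg--Ambro cyclic covering method, as extended to the relative semi-ample log canonical setting by Fujino. Since the assertion is local on $T$, I first shrink and assume $T$ is affine. I then clear denominators by choosing a sufficiently divisible positive integer $m$ with $m\epsilon \in \zz$ and $m\Gamma$ integral, and such that all the $\qq$-linear equivalences in the hypothesis lift to ordinary linear equivalences over $T$ after adjusting $L$ by a pull-back from $T$ (harmless for the question).

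From the relation $m\epsilon N \sim_T m(S + \overline{S})$, I build a cyclic cover $\pi\colon Y \to X$ of degree $m\epsilon$ whose branch locus is contained in $\Supp(S + \overline{S})$, followed by a log resolution $\rho\colon Y' \to Y$ chosen so that the total transform of $\Gamma + S + \overline{S}$ is simple normal crossings and $(Y', \Gamma_{Y'})$ is log smooth, where $\Gamma_{Y'}$ is defined by Riemann--Hurwitz and log pull-back. The key structural point is that $(\pi\circ\rho)_*\O Y'.(L_{Y'})$ decomposes as a direct sum of eigensheaves for the $\mu_{m\epsilon}$-Galois action; after the appropriate twist, $\O X.(L)$ is identified with exactly one eigencomponent, and similarly $\O X.(L+S)$ corresponds to the same eigencomponent of the pushforward from $Y'$ of a twist by $S_{Y'} \coloneqq (\pi\circ\rho)^*S$. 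The hypotheses that $S$ and $\overline{S}$ avoid lc centers of $(X,\Gamma)$ translate to the statement that $S_{Y'}$ and $\overline{S}_{Y'}$ contain no stratum of $\Gamma_{Y'}$.

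On the cover $Y'$, the pull-back of $N$ remains $\qq$-semi-ample over $T$, and the relation $L \sim_{\qq,T} K_X + \Gamma + N$ lifts to the analogous relation on $(Y', \Gamma_{Y'})$. This places us in the hypothesis of the classical relative Koll\'ar injectivity theorem for log smooth pairs with a semi-ample twist. That theorem is proved by invoking the $E_1$-degeneration of the relative logarithmic Hodge--de Rham spectral sequence for the proper SNC pair $(Y',\Gamma_{Y'}) \to T$ (Deligne--Illusie--Steenbrink, extended to the relative proper setting by base change arguments, e.g.\ Fujino). The degeneration implies strictness of the Hodge filtration on the relevant hypercohomology sheaves, which is exactly what forces the multiplication-by-section map defined by $S_{Y'}$ to induce an injection on each $R^q(\phi\circ\pi\circ\rho)_*$.

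The main obstacle is the bookkeeping: constructing $Y'$ so that $\pi\circ\rho$ is well controlled, that the SNC condition on $\Gamma_{Y'}$ is preserved, and that $S_{Y'},\overline{S}_{Y'}$ miss the log canonical strata of $(Y',\Gamma_{Y'})$; and then matching eigenspaces of the Galois action with $\O X.(L)$ and $\O X.(L+S)$ compatibly with the multiplication map. Once this is in place, the descent of injectivity from $Y'$ to $X$ is formal, since the target map is a direct summand of the $Y'$-level map, and a direct summand of an injection is injective. The relative $E_1$-degeneration needed is, as stated, a standard consequence of mixed Hodge theory once $\phi\circ\pi\circ\rho$ is arranged to be proper (which it is, by properness of $\phi$ and projectivity of $\pi,\rho$).
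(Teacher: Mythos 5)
First, a point of comparison: the paper does not prove this statement at all. It is quoted verbatim from \cite[Theorem 2.12]{Fuj17} and used as a black box in the proof of Proposition \ref{prop:log-canonical-Fano}, so there is no internal proof to measure your argument against; what can be assessed is whether your sketch would constitute a correct proof of Fujino's theorem.

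Your outline follows the standard Esnault--Viehweg--Koll\'ar--Fujino strategy (cyclic cover, eigensheaf decomposition, $E_1$-degeneration, descent along a direct summand), which is indeed the right framework, but two steps have genuine gaps. First, your single cyclic cover is taken with respect to $\epsilon N\sim_{\qq,T}S+\overline{S}$, so the boundary $\Gamma_{Y'}$ on the cover still carries arbitrary fractional coefficients; the ``classical relative Koll\'ar injectivity theorem for log smooth pairs with a semi-ample twist'' you then invoke on $Y'$ is essentially the statement being proved, and as written the reduction is circular. The standard resolution is different in order: one first uses the semi-ampleness of $N$ to replace $N$ by $\delta(S+\overline{S})$ plus a small multiple of a general member of $|kN|$ absorbed into the boundary --- the hypothesis that $S$ and $\overline{S}$ contain no log canonical centre of $(X,\Gamma)$ is exactly what keeps the new coefficients strictly below $1$ along $S$ --- thereby reducing to the fundamental injectivity theorem in which $D=S$ is supported on the fractional part of the boundary; only then does one take the cover that kills that fractional part and apply the $E_1$-degeneration. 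Second, the assertion that the needed ``relative $E_1$-degeneration'' over an arbitrary base scheme $T$ is a standard consequence of mixed Hodge theory is not substantiated and is not how the relative statement is obtained in the literature: one proves the absolute projective case and deduces injectivity of the map of sheaves on $T$ by twisting with $\phi^*A$ for $A$ sufficiently ample on $T$, using Serre vanishing and the Leray spectral sequence to identify $H^0(T,R^q\phi_*(\cdot)\otimes \mathcal{O}_T(A))$ with $H^q(X,\cdot\otimes\mathcal{O}_X(\phi^*A))$, so that a nonzero kernel sheaf would be detected on global sections. Both gaps are fixable and are handled in Fujino's work, but they are real omissions in the argument as written.
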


In order to prove the main proposition of this subsection, we will need the following lemma.

\begin{lemma}\label{decomposition-of-N'}
    Let $\phi\colon X\rightarrow T$ be a projective morphism of normal quasi-projective varieties.
    Let $(X,B)$ be a log canonical pair, with $-(K_X+B)$ ample over $T$.
    Let $\pi \colon X'\rightarrow X$ be a $\qq$-factorial dlt modification of $(X,B)$, 
    and define $N' \coloneqq -\pi^*(K_X+B)$. Then, we can write
    \[
    N'\sim_{\qq,T} A+D, 
    \]
    where $A$ is ample over $T$, and $D$ is an effective divisor that is semi-ample over $T$ outside ${\rm Ex}(\pi)$.
\end{lemma}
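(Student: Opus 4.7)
The plan is to decompose $N'$ by subtracting a small multiple of an effective $\pi$-exceptional divisor whose negative is $\pi$-ample. Write $M := -(K_X+B)$, which is ample over $T$ by hypothesis, so that $N' = \pi^*M$ is nef over $T$ (being the pullback of an ample divisor) and big over $T$ (since $\pi$ is birational and $M$ is ample).

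The key step is to produce an effective $\qq$-Cartier divisor $F$ on $X'$, supported on ${\rm Ex}(\pi)$, such that $-F$ is $\pi$-ample. Such an $F$ arises as a standard consequence of the negativity lemma combined with the $\qq$-factoriality of $X'$: fix any $\pi$-ample $\qq$-Cartier divisor $H$ on $X'$, which exists since $\pi$ is projective; then the class of $-H$ in the relative N\'eron--Severi space $N^1(X'/X)_\qq$ admits a representative of the form $\sum a_i E_i$ supported on the prime $\pi$-exceptional divisors $E_i$. Since $-H$ is $\pi$-anti-ample and therefore $\pi$-anti-nef, the negativity lemma~\cite[Lemma 3.39]{KM98}, applied to the $\pi$-exceptional class $\sum a_i E_i$, forces $a_i \geq 0$ for every $i$. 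Setting $F := \sum a_i E_i$ yields an effective $\pi$-exceptional $\qq$-Cartier divisor with $-F \equiv_\pi H$, hence $\pi$-ample.

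With $F$ in hand, for any sufficiently small rational $\epsilon > 0$ the divisor $A := N' - \epsilon F = \pi^*M + \epsilon(-F)$ is ample on $X'$ over $T$. Indeed, $\pi^*M$ is the pullback of an ample divisor and is nef over $T$, while $-\epsilon F$ is $\pi$-ample; by a standard Kleiman-type argument (ampleness over $T$ is open in the relative N\'eron--Severi space), the sum is ample over $T$ once $\epsilon$ is small enough. Setting $D := \epsilon F$, we obtain
\[
N' \sim_{\qq,T} A + D
\]
with $A$ ample over $T$ and $D$ effective and supported on ${\rm Ex}(\pi)$. In particular, $D$ vanishes identically on $X' \setminus {\rm Ex}(\pi)$, and is therefore trivially semi-ample over $T$ outside ${\rm Ex}(\pi)$.

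The main obstacle is the construction of $F$: one must justify that the prime $\pi$-exceptional divisors span the relevant part of $N^1(X'/X)_\qq$ so that every relative class admits an exceptional representative, which rests on the $\qq$-factoriality of $X'$ combined with the negativity lemma. Once $F$ is produced, the rest of the argument is a routine application of the ampleness criterion.
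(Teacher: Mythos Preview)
Your construction of $F$ has a genuine gap: the claim that $-H$ admits a representative $\sum a_i E_i$ supported on the $\pi$-exceptional prime divisors is equivalent to saying that these divisors span $N^1(X'/X)_\qq$, and this requires $X$ to be $\qq$-factorial, which the lemma does not assume. Concretely, suppose $(X,B)$ is klt but $X$ is not $\qq$-factorial; then a $\qq$-factorial dlt modification $\pi\colon X'\to X$ is a small $\qq$-factorialization, so $\mathrm{Ex}(\pi)$ has codimension at least two and there are no $\pi$-exceptional prime divisors at all. The only effective exceptional divisor is $F=0$, yet $N'=\pi^*M$ is trivial on every positive-dimensional fibre of $\pi$ and hence not ample over $T$. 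Your decomposition with $D=\epsilon F$ supported on exceptional divisors is therefore impossible in this case, even though the lemma's conclusion still holds.

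The paper avoids this by proving the weaker (but sufficient) statement that $\mathbb{B}_+(N'/T)\subset\mathrm{Ex}(\pi)$, using~\cite[Theorem~A]{BCL14}: since $N'$ is the pull-back of an ample divisor, a suitable twist $mN'+n\phi^*H$ is big and nef and its augmented base locus is contained in the locus where $\pi$ fails to be an isomorphism. This yields $N'\sim_{\qq,T}A+D$ with $A$ ample over $T$ and $D\geq 0$ whose relative stable base locus lies in $\mathrm{Ex}(\pi)$; replacing $D$ by a general member of its $\qq$-linear system makes it semi-ample over $T$ outside $\mathrm{Ex}(\pi)$. Note that this $D$ is typically \emph{not} supported on $\mathrm{Ex}(\pi)$ when $\pi$ is small---it is an honest divisor on $X'$ whose base locus happens to be small. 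Your argument is salvageable only under the extra hypothesis that $X$ is $\qq$-factorial (in which case it is indeed simpler than the paper's route), but as stated the lemma needs the augmented-base-locus approach.
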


\begin{proof}
    First, we prove that the relative augmented base locus of $N'$ is contained in ${\rm Ex}(\pi)$.
    Let $A$ be an ample divisor on $X'$, $H$ be a very ample divisor on $T$.
    Fix a rational number $0 < \epsilon \ll 1$ such that $\mathbb{B}_+(N'/T) = \mathbb{B}(N' - \epsilon A/T)$.
    Then, we have 
    \[
    \mathbb{B}_{+}(N'/T) = \bigcap_{m\in \mathbb{N}} \bigcap_{n \in \mathbb{N}} {\rm Bs}\left| 
    m(N'-\epsilon A)+n\phi^*H
    \right|.
    \]
    On the other hand, we may choose $m$ and $n$ such that the Cartier divisor
    $mN'+n\phi^*H$ is big and nef on $X'$.
    Moreover, we may further assume that $|mN'+n\phi^*H|$ defines an isomorphism on the complement of ${\rm Ex}(\pi)$.
    By~\cite{BCL14}*{Theorem A}, we conclude that
    \[
    \mathbb{B}_{+}(mN'+n\phi^*H) \subset {\rm Ex}(\pi).
    \]
    This latter inclusion implies that for $\epsilon$ small enough, we have
    \[
    {\rm Bs}
    \left|
    mN'+n\phi^* H -m\epsilon A 
    \right|
    \subset {\rm Ex}(\pi).
    \]
    Thus, we conclude that $\mathbb{B}_{+}(N'/T)\subset {\rm Ex}(\pi)$.
    By the above inclusion, we conclude that we may write
    \[
    N'\sim_{\qq,T} A+D,
    \]
    where $A$ is ample over $T$, and the base locus of $D$ is contained in ${\rm Ex}(\pi)$.
    We conclude the claim by replacing $D$ with some general element in its relative $\qq$-linear system.
\end{proof}

The following lemma will be used to lift complements from the semi-dlt locus of a dlt pair.

\begin{lemma}\label{lem:lifting}
Let $\mathcal{R}\subset [0,1]$ be a finite set of rational numbers.
Let $X\rightarrow T$ be a contraction
with $\dim X \leq 3$
and $t\in T$.
Let $(X',B')$ be a dlt pair so that
$-(K_{X'}+B')$ is big and semiample over $T$ and 
the coefficients of $B'$ belong to $\Phi(\mathcal{R})$.
Let $X$ be the ample model of $-(K_{X'}+B')$ over $T$ and $X''\rightarrow X'$ a log resolution of $(X',B')$.
Assume that the following conditions hold: 
\begin{enumerate}
    \item  $X''\rightarrow X$ admits an exceptional and relatively anti-ample divisor; and
    \item $X''\rightarrow X'$ extracts no log canonical place of $(X',B')$.
\end{enumerate}
Let $(S',B_{S'})$ be the semi-dlt pair obtained by adjunction of $(X',B')$ to $S'=\lfloor B'\rfloor$.
Assume that $(S',B_{S'})$ is $n$-complemented over $t\in T$ for some $n$ divisible by $I(\mathcal{R})$.
Then, $(X',B')$ is $n$-complemented over $t\in T$.
\end{lemma}

\begin{proof}
The strategy follows the proof of \cite{Bir16a}*{Proposition 8.1}.
We proceed in several steps.

{\bf Step 1:} In this step, we introduce some notation.
\newline
Let $(X'',B'')$ denote the trace of $(X',B')$ on $X''$ 
and let $B$ be the push-forward of $B'$ to $X$.
By assumption, $N \coloneqq -(\K X. + B)$ is ample over $T$.
Therefore, $N'' \coloneqq -(K_{X''}+B'')$ is nef and big over $T$.
Define $W'' \coloneqq \lfloor {B''}^{\geq 0}\rfloor$, $\Delta'' \coloneqq B''-W''$, and $S''\coloneqq W'' - \pi^{-1}_* \lfloor B \rfloor$.
Observe $S''$ is a Weil divisor on $X''$, and therefore it is Cartier.
For any divisor $\Omega ''$ on $X''$, let $\Omega'$ and $\Omega$ denote the push-forwards on $X'$ and $X$, respectively.

{\bf Step 2:} In this step, we introduce some line bundles on $X''$ that are suitable for the use of vanishing theorems.
\newline
On $X''$, consider the Cartier divisor 
\[
L'' \coloneqq -nK_{X''}-nW''-\lfloor (n+1)\Delta'' \rfloor.
\]
The choice is motivated as follows: our goal is to lift the complement $B \subs S'. ^+$ from $S'$ to $X'$.
Since $X'$ may be singular, we need to work on the smooth model $X''$ to use the appropriate vanishing theorems.
Observe that we may write
\begin{align}
\nonumber    L'' = & -nK_{X''}-nW''-\lfloor (n+1)\Delta'' \rfloor \\
\nonumber     =& K_{X''}+W''+ (n+1)\Delta''- \lfloor (n+1)\Delta'' \rfloor  -(n+1)K_{X''}-(n+1)W''-(n+1)\Delta'' \\
\nonumber =& K_{X''}+W''+(n+1)\Delta''- \lfloor (n+1)\Delta'' \rfloor+(n+1)N''\\
\nonumber =& K_{X''}+B''+n\Delta''- \lfloor (n+1)\Delta'' \rfloor+(n+1)N''\\
\nonumber =& n \Delta'' - \lfloor (n+1)\Delta'' \rfloor + nN''.
\end{align}
Hence, we can write
\[
L''-S''=K_{X''}+(W''-S'')+(n+1)\Delta''- \lfloor (n+1)\Delta'' \rfloor+(n+1)N''.
\]

{\bf Step 3:} In this step, we introduce divisors $\Phi''$ and $\Lambda''$ on $X''$ and study their properties.
\newline
Let $\Phi''$ be the unique integral divisor on $X''$ such that
\[
\Lambda'' \coloneqq (W''-S'')+(n+1)\Delta''- \lfloor (n+1)\Delta'' \rfloor + \Phi''
\]
is a boundary, $(X'',\Lambda'')$ is dlt, and $\lfloor \Lambda'' \rfloor = W''-S''$.
It follows that $\Phi''$ is supported on $\mathrm{Ex}(X'' \rar X')$ and shares no components with $W''$.

{\bf Step 4:} In this step, we apply Theorem \ref{kollar-injectivity} to $L''-S''+\Phi''$.
\newline
Recall that $N''$ is semi-ample over $T$, being the pull-back of an ample divisor over $T$.
By assumption, we may find an effective divisor $F''$ on $X''$ that is exceptional and anti-ample for $X'' \rar X$.
Hence, $N''-\epsilon F''$ is ample over $T$ for $0 < \epsilon \ll 1$.
Observe that $(X'',\Lambda'')$ is a log smooth pair.
The pair $(X'',\Supp(\Lambda''+S''+F''))$ is a log smooth pair.
Furthermore, we have $\lfloor \Lambda'' \rfloor=W''-S''$.
Therefore, $\Supp(S''+F'')$ contains no log canonical center of $(X'',\Lambda'')$.
Since we have
\[
L''-S''+\Phi''=K_{X''}+\Lambda''+ (n+1) N'',
\]
in order to apply Theorem~\ref{kollar-injectivity}, we are left with checking that the third condition of the statement holds.
Fix $0 < \delta \ll 1$, 
such that $N''-\epsilon F'' -\delta S''$ is ample over $T$.
Then, we may write $N''-\epsilon F'' -\delta S'' \sim_{\qq,T} G''\geq 0$, 
where $G''$ contains no log canonical center of $(X'',\Lambda'')$.
Hence, we have a $\qq$-linear relation
\[
N''\sim_{\qq,T} G''+\epsilon F'' + \delta S'',
\]
where $G''+\epsilon F''$ is an effective divisor
that does not contain any log canonical center of $(X'',\Lambda'')$.
Thus, by Theorem~\ref{kollar-injectivity}, we deduce that there is an injection
\[
R^1\psi_*\mathcal{O}_{X''}(L''-S''+\Phi'') \rightarrow R^1\psi_*\mathcal{O}_{X''}(L''+\Phi''),
\]
as desired.
Here, $\psi$ denotes the morphism $X'' \rar T$.
Then we have a surjection
\[
\psi_*\mathcal{O}_{X''}(L''+\Phi'')\rightarrow \psi_*\mathcal{O}_{X''}((L''+\Phi'')|_{S''}).
\]
Shrinking $T$ around $t\in T$, the above surjection identifies with
\begin{equation} \label{eqtn surjection 1}
H^0(\mathcal{O}_{X''}(L''+\Phi'')) \rightarrow H^0(\mathcal{O}_{S''}((L''+\Phi'')|_{S''}).
\end{equation}

{\bf Step 5:} In this step, we introduce some divisors on $S''$.
\newline
By assumption, we have an $n$-complement $B \subs S'.^+= B \subs S'. + R \subs S'.$ for $(S',B \subs S'.)$ over $t \in T$.
Notice that $R \subs S'.$ is a $\qq$-Cartier divisor not containing any irreducible component of the conductor of $(S',B \subs S'.)$.
We have a birational morphism of possibly reducible algebraic varieties $S''\rightarrow S'$.
Furthermore, by the second assumption on $X''\rightarrow X'$, every irreducible component of $S''$ maps birationally onto its image in $S'$.
Therefore, $R_{S'}$ does not contain the image of any component of $S''$ on $S'$, and 
its pull-back $R_{S''}$ on $S''$ is well-defined.
Now, we have
\[
n (\K S''. + B \subs S''. + R \subs S''.) \sim_T 0.
\]
By construction, we have $B \subs S''. = (B''-S'')| \subs S''.$, and the restriction preserves the coefficients, as we are in a log smooth setting.
Removing the contribution of $(W''-S'')| \subs S''.$, which is integral, we realize that $n(\Delta \subs S''. + R \subs S''.)$ is integral, where we have $\Delta \subs S''. \coloneqq \Delta'' | \subs S''.$.
We define
\[
G_{S''} \coloneqq nR_{S''}+n\Delta_{S''}-\lfloor (n+1) \Delta_{S''}\rfloor + \Phi \subs S''.,
\]
where we have $\Phi \subs S''. \coloneqq \Phi''| \subs S''.$.
By definition, $G_{S''}$ is an integral divisor, and $nR \subs S''. + \Phi \subs S''.$ is effective.
Therefore, to show that $G \subs S''.$ is effective, it suffices to show that the coefficients of $n\Delta_{S''}-\lfloor (n+1) \Delta_{S''}\rfloor$ are strictly greater than $-1$.
Then, as rounding and restricting commutes in a log smooth setup, we may write
\[
n\Delta_{S''}-\lfloor (n+1) \Delta_{S''}\rfloor = ((n+1)\Delta'' - \lfloor (n+1) \Delta'' \rfloor - \Delta'')| \subs S''.,
\]
where the summand $(n+1)\Delta'' - \lfloor (n+1) \Delta'' \rfloor$ is effective.
As the coefficients of $\Delta''$ are strictly less than $1$, it follows that the coefficients of $- \Delta''$ are strictly greater than $-1$.
In particular, $G \subs S''.$ is effective.

{\bf Step 6:} In this step, we lift $G \subs S''.$ to $X''$.
\newline
We have $N \subs S''. \coloneqq N''| \subs S''. = -(\K X''. + B'')| \subs S''. = -(\K S''. + B \subs S''.)$.
Then, it follows that $nR_{S''}\sim_T nN_{S''}$.
Up to further shrinking $T$ around $t$, in the following we may drop $T$ in the linear equivalence.
In particular, we may write $nR_{S''}\sim nN_{S''}$.
By the previous considerations we have
\[
0\leq G_{S''} \sim nN_{S''}+n\Delta_{S''}-\lfloor (n+1) \Delta_{S''}\rfloor + \Phi \subs S''..
\]
Then, observe that
\begin{align}
\nonumber    L \subs S''. \coloneqq L'' | \subs S''. = &  (K_{X''}+B''+n\Delta''- \lfloor (n+1)\Delta'' \rfloor+(n+1)N'')| \subs S''.\\
\nonumber = & \K S''. + B \subs S''. + n \Delta \subs S''. - \lfloor (n+1) \Delta \subs S''. \rfloor + (n+1) N \subs S''.\\
\nonumber = & n \Delta \subs S''. - \lfloor (n+1) \Delta \subs S''. \rfloor + n N \subs S''..
\end{align}
Hence, we conclude that
\[
0\leq G_{S''}\sim L_{S''} + \Phi \subs S''..
\]
Thus, by the surjectivity of \eqref{eqtn surjection 1}, there exists $0 \leq G'' \sim L''+\Phi''$ on $X''$ such that $G''|_{S''} = G_{S''}$.

{\bf Step 7:} In this step, we study $G'$, the push-forward of $G''$ to $X'$, and we introduce $(B')^+$, the candidate to be a complement for $(X',B')$.
\newline
By definition of $L''$, we get
\[
0\leq G'' \sim -nK_{X''}-nW''-\lfloor (n+1)\Delta'' \rfloor + \Phi''.
\]
Let $G'$ be the push-forward of $G''$ to $X'$.
Then, as $\Phi''$ is exceptional for $X'' \rar X'$, we have
\begin{equation} \label{eqtn lin eq}
0\leq G' \sim -nK_{X'}-nW'-\lfloor (n+1)\Delta' \rfloor.
\end{equation}
Then, we can define
\[
nR' \coloneqq G' + \lfloor (n+1)\Delta' \rfloor - n \Delta ' \sim -n(\K X'. + B'),
\]
where the linear equivalence follows from \eqref{eqtn lin eq}.
By Proposition \ref{prop arithmetic} and the fact that the coefficients of $\Delta'$ are in $\Phi(\mathcal R)$, it follows that $nR'$ is effective.
Then, we define $(B')^+ \coloneqq B' + R'$.
By construction, we have that $n(\K X'. + B') \sim 0$.

{\bf Step 8:} In this step, we show that $(B')^+$ is an $n$-complement for $(X',B')$ over $t \in T$.
\newline
To conclude, it suffices to show that $(X',(B')^+)$ is log canonical.
First, we show that $R'|_{S'}=R_{S'}$.
Let
\[
nR'' \coloneqq G'' - \Phi'' + \lfloor (n+1)\Delta'' \rfloor - n\Delta'' \sim
L''  + \lfloor (n+1)\Delta'' \rfloor - n\Delta'' = nN'' \sim_{\qq,X} 0.
\]
By construction, the push-forward of $R''$ onto $X'$ is $R'$.
Furthermore, $R''\sim_{\qq,X'}0$ holds, as we have $R''\sim_{\qq,X}0$.
As $R''$ is relatively trivial for the birational morphism $X'' \rar X'$, it follows that it agrees with the pull-back of its push-forward.
In particular, $R''$ is the pull-back of $R'$.
Observe that $R''|_{S''}=R_{S''}$.
Hence, we have $R'|_{S'}=R_{S'}$.
This implies the equality
\[
K_{S'}+B_{S'}+R_{S'}=(K_{X'}+B'+R')|_{S'}=(K_{X'}+(B')^+)|_{S'}.
\]
Therefore, by inversion of adjunction \cite{thesis}*{Lemma 2.3.1}, the pair $(X',(B')^+)$ is log canonical in a neighborhood of $S'$.
If $(X',B'+R')$ is not log canonical in a neighborhood of $\phi \sups -1.(t)$, then we can write $\mathrm{Nklt}(X',B'+R') = \Supp (\lfloor B' \rfloor) \cup Z'_1 \cup Z'_2$, where $Z'_1$ is a union of log canonical centers, and $\mathrm{Nlc}(X',B'+R')=Z'_2$.
As $(X',(B')^+)$ is log canonical in a neighborhood of $S'$, we have $\Supp (S') \cap Z_2' = \emptyset$.
Then, fix $0 < \alpha \ll 1$, such that $\mathrm{Nklt}(X',B'+(1-\alpha)R')= \Supp(\lfloor B' \rfloor) \cup Z'_2$, and $\mathrm{Nlc}(X',B'+(1-\alpha)R')=Z_2'$.
Furthermore, this choice of $\alpha$ guarantees that the log canonical centers of $(X',B')$ are the same as the ones of $(X',B' + (1-\alpha)R')$.
Now, let $A'$ and $D'$ be as in Lemma~\ref{decomposition-of-N'}.
In particular, we have $-(\K X'. + B') \sim \subs \qq,T. A'+ D'$, $A'$ is ample over $T$, and $D'$ is semi-ample over $T$ outside of $\mathrm{Ex}(X' \rar X)$.
Consider the following facts:
\begin{itemize}
    \item $D'$ is semi-ample over $T$ outside of $\mathrm{Ex}(X' \rar X)$;
    \item every log canonical center of $(X',B')$ that is contained in $\mathrm{Ex}(X' \rar X)$ is contained in $S'$; and
    \item the log canonical centers of $(X',B')$ are the same as the log canonical centers of $(X',B' + (1-\alpha)R')$.
\end{itemize}
Fix $0 < \beta \ll \alpha$.
Then, we have that adding $\beta D'$ to $(X',B' + (1-\alpha)R')$ does not create new log canonical centers, but it may create deeper singularities along $S'$ and $Z_2'$.
In particular, we have $\mathrm{Nklt}(X',B' + (1-\alpha)R' + \beta D') = \Supp (B') \cup Z_2'$, and $\mathrm{Nlc}(X',B' + (1-\alpha)R' + \beta D')=Z_2 ' \cup Z_3'$, where $Z_3' \subset \Supp (S')$.
Then, we have the following linear equivalences
\begin{align}
\nonumber    \K X'. + B' + (1-\alpha) R' + \beta D' \sim & \subs \qq,T. \alpha(\K X'. + B') + \beta D' \\
\nonumber     \sim& \subs \qq,T. -\alpha(A' + D') + \beta D' \\
\nonumber \sim & \subs \qq,T.  -(\alpha - \beta)(A' + D') - \beta A'.
\end{align}
Then, fix $0 < \gamma \ll \beta$, such that we have
\begin{itemize}
    \item $\beta A' + \gamma (B' - S')$ is ample over $T$;
    \item $\mathrm{Nlc}(X',B'+(1-\alpha)R' + \beta D' - \gamma (B'-S'))= \mathrm{Nlc}(X',B'+(1-\alpha)R' + \beta D')$; and
    \item the union of the log canonical centers of $(X',B'+(1-\alpha)R' + \beta D' - \gamma (B'-S'))$ is $\Supp(S')$.
\end{itemize}
In particular, it follows that $\mathrm{Nklt}(X',B'+(1-\alpha)R' + \beta D' - \gamma (B'-S'))=\Supp(S') \cup Z_2'$, which is disconnected along $\phi \sups -1.(t)$.
On the other hand, we may write
\[
\K X'. + B' + (1-\alpha) R' + \beta D' - \gamma (B'-S') \sim \subs \qq,T. -(\alpha - \beta)(A'+D') - (\beta A' + \gamma (B'-S')),
\]
where $A'+D'$ is big and semi-ample over $T$, and $\beta A' + \gamma (B'-S')$ is ample over $T$.
Therefore, by the connectedness principle \cite{Bir16a}*{Lemma 2.14}, $\mathrm{Nklt}(X',B'+(1-\alpha)R' + \beta D' - \gamma (B'-S'))=\Supp(S') \cup Z_2'$ is disconnected along $\phi \sups -1.(t)$.
This provides the required contradiction.
In particular, $(X',B'+R')$ is log canonical along $\phi \sups -1.(t)$.
This concludes the proof.
\end{proof}

\begin{lemma}\label{lem:complement-sdlt-surface}
Let $\mathcal{R}\subset [0,1]$ be a finite set of rational numbers.
There exists a number $n$ only depending on $\mathcal{R}$ satisfying the following.
Let $\phi\colon X'\rightarrow T$ be a contraction of normal quasi-projective varieties and $t\in T$ be a point. 
Assume that the pair $(X',B')$ satisfies the following conditions:
\begin{enumerate}
    \item $(X',B')$ is a dlt 3-fold;
    \item $-(K_{X'}+B')$ is big and semiample; and
    \item $(X',B')$ is $\qq$-complemented over a neighborhood of $t\in T$.
\end{enumerate}
Let $\pi \colon X'\rightarrow X$ be the ample model of $-(K_{X'}+B')$ over $T$.
Set $S'=\lfloor B'\rfloor$, and let $S'_1$ be the sum of the components of $S'$ that are contracted on $X$.
Assume that $S'_1$ contains all the log canonical centers of $(X',B')$ that are contained in ${\rm Ex}(X'\rightarrow X)$.
Let $(S',B_{S'})$ be the semi-dlt pair obtained by adjunction of $K_{X'}+B'$ to $S'$.
Then, up to shrinking $T$ around $t$, we can find 
\[
\Gamma_{S'} \sim_T -n(K_{S'}+B_{S'}),
\] 
such that $(S',B_{S'}+\Gamma_{S'}/n)$ is a semi-log canonical pair.
\end{lemma}

\begin{proof}
If $(X',B')$ is klt, then the claim follows from \cite{Bir16a}*{Theorem 1.8}.
Thus, we may assume that $(X',B')$ is strictly log canonical over $t \in T$.
Furthermore, by Remark \ref{rmk generic point} and \cite{thesis}*{Theorem 1.8.7}, we may assume that $\dim T >0$ and that $t$ is not the generic point of $T$.
We proceed in several steps.

{\bf Step 1:} In this step, we fix our notation.

We let $\phi$ denote the morphism $\phi \colon X' \rar T$ and we let $S_2'$ denote the sum of all the components of $S'$ that are not contained in $\mathrm{Ex}(X' \rar X)$.
Lastly, we denote by $S_1$ and $T_1$ the images of $S_1'$ in $X$ and $T$, respectively.
\newline
Now, let $B'+B^+$ denote a $\qq$-complement of $(X',B')$ over $t \in T$.
Then, $(X',B'+B^+)$ induces minimal quasi-log canonical structures on $X$ and $T$, see \cite{KK10}*{Definition 5.3}.
Then, as $S_1$ and $T_1$ are closures of unions of $\pi$-qlc strata and $\phi-$qlc strata, respectively (see \cite{KK10}*{Definition 5.4}), it follows from \cite{KK10}*{Proposition 5.7} that $S_1$ and $T_1$ are semi-normal.

{\bf Step 2:} In this step, we show that $S'_1\rightarrow T_1$ is a contraction.

From the exact sequence
\[
0\rightarrow \mathcal{O}_{X'}(-S_1')\rightarrow \mathcal{O}_{X'} \rightarrow \mathcal{O}_{S_1'} \rar 0,
\]
we obtain the sequence
\[
\phi_* \mathcal{O}_{X'} \rightarrow \phi_* \mathcal{O}_{S_1'} \rightarrow R^1\phi_*\mathcal{O}_{X'}(-S_1'),
\]
which is exact in the middle.
Set $N' = - (\K X'. + B')$.
Then, by Lemma~\ref{decomposition-of-N'}, we can write
\begin{align}
\nonumber -S_1' &= K_{X'}+B'-S_1'+N'\\
\nonumber &\sim_{\qq,T} K_{X'}+B'-S_1'+(1-\epsilon)N'+\epsilon A' +\epsilon D', 
\end{align}
where $A'$ is ample over $T$, and $D'$ is an effective divisor that is semi-ample over $T$ outside of ${\rm Ex}(X' \rar X)$.
By assumption, we have that all the log canonical centers of $(X',B')$ that are contained in $\mathrm{Ex}(X' \rar X)$ are contained in $S'_1$.
Therefore, if we pick $0 < \epsilon \ll 1$ and $D'$ generically in its relative $\qq$-linear series, by Lemma \ref{decomposition-of-N'}, the pair $(X',B'-S_1'+\epsilon D')$ is dlt.
Moreover, since $\epsilon A'$ is ample over $T$, we may pick $\delta$ small enough such that
$(X',B'-S_1'-\delta\lfloor \pi^{-1}_*B \rfloor +\epsilon D')$ is klt 
and $\epsilon A' +\delta \lfloor \pi^{-1}_*B \rfloor$ is ample over $T$.
Hence, we may write
\[
-S_1' \sim_{\qq,T} (K_{X'}+B'-S_1'-\delta\lfloor \pi^{-1}_* B \rfloor +\epsilon D') + ((1-\epsilon)N'+\epsilon A' +\delta \lfloor \pi^{-1}_*B\rfloor),
\]
where the first summand is the log canonical divisor of a klt pair, and the second one is a divisor that is ample over $T$.
By the relative version of the Kawamata--Viehweg vanishing theorem, we conclude that
\[
R^1\phi_*\mathcal{O}_{X'}(-S'_1)=0.
\]
Thus, $\phi_*\mathcal{O}_{X'}\rightarrow \phi_*\mathcal{O}_{S'_1}$ is surjective.
Let $S'_1\rightarrow S'_0\rightarrow T$ be the Stein factorization of $S'_1\rightarrow T$, and write $\phi_0 \colon S_0' \rar T$ for the induced morphism.
Then, we have that $\mathcal{O}_T= \phi_*\mathcal{O}_{X'}\rightarrow \phi_*\mathcal{O}_{S'_1}={\phi_0}_*\mathcal{O}_{S'_0}$ is surjective.
The morphism $\mathcal{O}_T\rightarrow {\phi_0}_*\mathcal{O}_{S'_0}$ factors as 
$\mathcal{O}_T \rightarrow \mathcal{O}_{T'}\rightarrow {\phi_0}_*\mathcal{O}_{S'_0}$.
Hence, we conclude that $\mathcal{O}_{T_1}\rightarrow {\phi_0}_*\mathcal{O}_{S'_0}$
is surjective.
Since $S'_0\rightarrow T_1$ is finite, then $\mathcal{O}_{T_1}\rightarrow {\phi_0}_*\mathcal{O}_{S'_0}$ is indeed an isomorphism. 
Hence, $S'_0\rightarrow T_1$ is an isomorphism, and we conclude that $S_1'\rightarrow T_1$ is a contraction.

{\bf Step 3:} In this step, we construct a complement on $S_1'$.

Let $(S'_1,B_{S'_1})$ be the semi-dlt pair obtained by adjunction of $(X',B')$ to $S'_1$.
Recall that both $S_1$ and $T_1$ are semi-normal curves.
Then, by Remark~\ref{remark Galois}, $(S'_1,B_{S'_1})$ satisfies the conditions of Proposition~\ref{prop:log-canonical-dlt-surface-complements-smooth-curves}.
Indeed, by \cite{thesis}*{\S2.3.1}, there exists a finite set of rational numbers $\mathcal S \subset [0,1]$, only depending on $\mathcal R$, such that the coefficients of $B \subs S'.$ belong to $\Phi (\mathcal S)$.
Furthermore, as $(X',B')$ is $\qq$-complemented over $t \in T$, then so is $(S'_1, B \subs S'_1.)$ over $t \in T'$.
Thus, the pair $(S'_1,B_{S'_1})$ and the morphisms
$S'_1\rightarrow S_1\rightarrow T_1$ satisfy Condition A.
We conclude that there exists 
\[
\Gamma_{S'_1} \sim_T -n(K_{S'_1}+ B_{S'_1}) 
\] 
such that $(S'_1,B_{S'_1}+\Gamma_{S'_1}/n)$ is a semi-log canonical pair.

{\bf Step 4:} In this step, we lift the complement constructed in Step 3 to a complement on $S'$.

Let $(S'_2,B_{S'_2})$ be the semi-dlt pair obtained by adjunction of $(X',B')$ to $S'_2$.
Note that $-(K_{S'_2}+B_{S'_2})$ is big and nef over $T$, since $-(K_X + B)$ is ample over $T$ and every component of $S_2'$ is mapped birationally to $X$.
Let $C$ be the semi-dlt curve that is the union of all the one-dimensional strata of $\lfloor B'\rfloor$.
We denote by $B_C$ the boundary obtained by adjunction of $(X',B')$ to $C$.
Set $C_i=C\cap S'_i$
and $B_{C_i}\coloneqq  B_C|_{C_i}$ for each $i\in \{1,2\}$.
We let 
\[
\Gamma_{C_1}\coloneqq \Gamma_{S'_1}|_{C_1}.
\]
The pair $(C_1,B_{C_1}+\Gamma_{C_1}/n)$ is semi-log canonical over $t\in T$.
Then, we can extend $B_{C_1}+\Gamma_{C_1}/n$ to an $n$-complement $B_C$ of $C$ over $t\in T$.
Indeed, we may use \cite{thesis}*{Proposition 2.7.2} to lift the complement to the components of $C_2$ that are contracted to $t \in T$.
If we consider a horizontal component of $C_2$, we may first shrink $T$ around $t$ so that the only non-trivial components of $B_{C_2}$ are over $t \in T$.
Then, by Remark \ref{remark points on curves}, we may complement $B_{C_2}$ along these points by increasing it to its round up.
We denote this $n$-complement by
\[
(C,B_C + \Gamma_C/n).
\] 
By Lemma~\ref{lem:lifting}, for each component of $S'_2$, we
can lift the $n$-complement on $C\cap S'_2$ to an $n$-complement for $S'_2$.
Thus, for each component $P$ of $S'$, we have produced an $n$-complement for
$(K_{X'}+B')|_P$ so that they agree on the intersections.
Hence, up to shrinking $T$ around $t$,
we can find 
\[
\Gamma_{S'} \sim_T -n(K_{S'}+B_{S'}),
\] 
such that $(S',B_{S'}+\Gamma_{S'}/n)$ is a semi log canonical pair.
This finishes the proof.
\end{proof}

\begin{proof}[{Proof of Proposition \ref{prop:log-canonical-Fano}}]

By lemma~\ref{lemma dlt model}
and Remark~\ref{remark dlt model}, we can find a log resolution $X''\rightarrow X$ of $(X,B)$
and a dlt modification
$X'\rightarrow X$ of $(X,B)$ that satisfy the following conditions: 
\begin{enumerate}
    \item the rational map $X''\dashrightarrow X'$ is a morphism;
    \item $X''\rightarrow X$ admits an exceptional and relatively anti-ample divisor;
    \item $X''\rightarrow X'$ extracts no log canonical place of $(X',B')$; and 
    \item the sum of the components of $\lfloor B'\rfloor$ that are contracted in $X$ contains all the log canonical centers of $(X',B')$ contained in ${\rm Ex}(X'\rightarrow X)$.
\end{enumerate}
Note that $(X',B')$ is $\qq$-complemented over $t \in T$. 
Let $(S',B_{S'})$ be the pair obtained by performing adjunction for $(X',B')$ to $S'=\lfloor B'\rfloor$.
By Lemma~\ref{lem:complement-sdlt-surface}, we can find an $n$-complement for $(S',B_{S'})$ over $t\in T$.
Here, $n$ only depends on $\mathcal{R}$.
We may assume that $I(\mathcal{R})$ divides $n$.
Then, by Lemma~\ref{lem:lifting}, we conclude that $(X',B')$ admits an $n$-complement over $t\in T$.
\end{proof}

\section{Proof of the theorems}\label{proof-theorems}

In this section, we prove the main theorem of this article.
First, we aim to prove the following version of the main theorem.

\begin{theorem}\label{main-theorem-hyper}
Let $\mathcal{R} \subset [0,1]$ be a finite set of rational numbers.
There exists a natural number $n$ only depending on $\mathcal{R}$ that satisfies the following.
Let $X\rightarrow T$ be a contraction between normal quasi-projective varieties
such that the log canonical $3$-fold $(X,B)$ is $\qq$-complemented over $t\in T$, and the coefficients of $B$ belong to $\Phi(\mathcal{R})$. 
Then, up to shrinking $T$ around $t$, we can find
\[
\Gamma \sim_{T} -n(K_X+B)
\]
such that $(X,B+\Gamma/n)$ is a log canonical pair.
\end{theorem}

\begin{proof}
By Lemma \ref{rmk generic point} and Lemma \ref{rmk lcc}, we may assume that there is a log canonical place of $(X,B)$ whose center on $T$ is $t$.
Let $B+B'$ be the $\qq$-complement of the log canonical $3$-fold $(X,B)$ around the point $t\in T$.
Over a neighborhood of $t\in T$, we have that $B' \sim_{\qq,T} -(K_X+B)$ and $(X,B+B')$ is log canonical. 
Let $\pi \colon  Y\rightarrow X$ be a $\qq$-factorial dlt modification of $(X,B+B')$ over $T$.
Write
\[
\pi^*(K_X+B+B')=K_Y+B_Y+B'_Y+E, 
\]
where $E$ is the reduced divisor that contains all the log canonical centers of $(X,B+B')$.
We also set
\[
B_Y \coloneqq \pi^{-1}_{*}(B) -\pi^{-1}_{*}(B) \wedge E,
\]
and
\[
B'_Y \coloneqq \pi^{-1}_{*}(B') -\pi^{-1}_{*}(B')\wedge E.
\]
Observe that $B_Y$ (resp. $B'_Y$) is the strict transform of $B$ (resp. $B'$)
with all the prime components contained in the support of $E$ removed.
By parts (1) and (2) of Lemma~\ref{reduction complements},
we know it suffices to produce an $n$-complement for
$(Y,B_Y+E)$.
Observe that $(Y,B_Y+E)$ is $\qq$-complemented over $t\in T$,
hence the assumptions of the theorem are preserved.
Observe that all the log canonical places of $(Y,B_Y+B'_Y+E)$ are contained in the support of $E$.
Therefore, for $\epsilon>0$ small enough, the pair 
\[
(Y,B_Y+(1+\epsilon)B'_Y+E)
\]
is a $\qq$-factorial dlt pair which is pseudo-effective over $T$.
We run a minimal model program for $K_Y+B_Y+(1+\epsilon)B'_Y+E$ 
over $T$, which terminates with a good minimal model
$(Z,B_Z+(1+\epsilon)B'_Z+E_Z)$ over $T$ (see, e.g.,~\cite{Fuj00}). 
Here, $B_Z$ (resp. $B'_Z$ and $E_Z)$ denotes the strict transform of $B_Y$ (resp. $B'_Y$ and $E$).
Observe that this minimal model program is also a minimal model program for 
\[
-\epsilon (K_Y+B_Y+E) \sim_{\qq,T} \epsilon B'_Y \sim_{\qq,T} K_Y+B_Y+(1+\epsilon)B'_Y+E.
\]
By part (3) of Lemma~\ref{reduction complements}, any $n$-complement over $t\in T$ for $(Z,B_Z+E_Z)$ pulls back to a
$n$-complement over $t\in T$ for $(Y,B_Y+E_Y)$.
Therefore, it suffices to produce an $n$-complement for the log canonical pair
$(Z,B_Z+E_Z)$, which is $\qq$-complemented over $t\in T$.
Since $Z$ is a good minimal model, we have that $-(K_Z+B_Z+E_Z)$ is semi-ample.
Hence, it induces a morphism $\phi \colon Z\rightarrow Z_0$ over $T$.
We obtain a diagram as follows
\[
 \xymatrix@C=50pt{
 Y\ar[d]_-{\pi} \ar[rdd]\ar@{-->}[rr] & & Z\ar[ldd]\ar[d]^-{\phi} \\
 X\ar[rd] & & Z_0\ar[ld]  \\
 & T & 
 }
\]
We will analyze the cases depending on the dimension of
$Z_0$.
If $\dim Z_0=3$, then the map $Z\rightarrow Z_0$ is a
$(K_Z+B_Z+E_Z)$-trivial birational map.
It suffices to find an $n$-complement 
for $K_{Z_0}+B_{Z_0}+E_{Z_0}$,
where $n$ only depends on $\mathcal{R}$.
Moreover, $-(K_{Z_0}+B_{Z_0}+E_{Z_0})$ is ample over 
$T$.
The existence of such $n$-complement follows from Proposition \ref{prop:log-canonical-Fano}.
If $\dim Z_0=2$, then the existence of an $n$-complement
for $K_Z+B_Z+E_Z$ over $T$ follows from Proposition~\ref{prop:lifting-complements-surfaces}.
If $\dim Z_0=1$, then the existence of an $n$-complement 
for $K_Z+B_Z+E_Z$ over $T$ follows from 
Proposition~\ref{prop:lifting-complements-curves}.
Finally, if $\dim Z_0=0$, then we have $\dim T=0$,
so we are in the projective case.
In this case, the existence of an $n$-complement
for $K_Z+B_Z+E_Z$ follows from~\cite{thesis}*{Theorem 1.8.7}.
Observe that in the above three cases $n$ only depends on $\mathcal{R}$.
This finishes the proof of the existence of $n$-complements
with $n$ only depending on $\mathcal{R}$.
\end{proof}

In order to prove Theorem~\ref{main-theorem}, 
we just need to perform a perturbation of the coefficients set in order to reduce to the hyper-standard case and apply Theorem~\ref{main-theorem-hyper}.
This statement is proved in~\cite{FM18}*{Lemma 3.2}
for Fano-type varieties.
The proof, in this case, is essentially the same.
We recall some notation.

\begin{notation} \label{notation}
Let $\Lambda \subset \qq \cap (0,1]$ be a set with $\overline{\Lambda}\subset \qq$ and satisfying the descending chain condition.
Given a natural number $m$, 
we will define an $m$-truncation of the elements of $\Lambda$ as follows.
Consider the partition
\[
\mathcal{P}_m \coloneqq \left\{ \left(0, \frac{1}{m}\right], \left( \frac{1}{m}, \frac{2}{m} \right], \dots, \left( \frac{m-1}{m}, 1 \right]\right\}
\]
of the interval $(0,1]$ into $m$ intervals of length $\frac{1}{m}$.
For each $b\in \Lambda$, we denote by $I(b,m)$ the interval in $\mathcal{P}_m$ such that $b\in I(b,m)$.
For each $b\in \Lambda$, we define
$b_m \coloneqq \sup\{ x \mid x\in I(b,m)\cap \Lambda\}$.
For every $b\in \Lambda$ and $m$ positive integer,
we have $b\leq b_m$.
On the other hand, for $m$ divisible enough we have
$b=b_m$.
We define the set 
$\mathcal{C}_m \coloneqq \{b_m \mid b\in \Lambda\}$.
This is the $m$-truncation of the set $\Lambda$.
The set $\mathcal{C}_m$ is finite.
Moreover, we have an equality
\[
\overline{\Lambda} = \bigcup_{m \in \mathbb{N}} \mathcal{C}_m.
\]
Let $B$ be a boundary divisor
with prime decomposition $B=\sum_j b^jB^j$ such that the $b^j$ belong to $\Lambda$.
We define its $m$-truncation to be
$B_m \coloneqq  \sum_j b^j_m B^j$.
By the above discussion on $m$-truncations of elements of $\Lambda$, it follows that $B\leq B_m$ for every $m$,
and $B=B_m$ for $m$ divisible enough.
\end{notation}

The following lemma is a version of~\cite{FM18}*{Lemma 3.2} for $\qq$-complemented $3$-folds.

\begin{lemma}\label{coefficients-perturbation}
Let $\Lambda \subset \qq$ be a set satisfying the descending chain condition with rational accumulation points. There exists a natural number $m$, only depending on $\Lambda$, satisfying the following.
Let $X\rightarrow T$ be a contraction between normal quasi-projective varieties and $t\in T$ a closed point,
where $(X,B)$ is a log canonical $3$-fold, such that
\begin{itemize}
\item $(X,B)$ is $\qq$-complemented over $T$; and
\item ${\rm coeff}(B)\subset \Lambda$.
\end{itemize}
Let $B_m$ be as in Notation \ref{notation}.
Assume $X$ is $\qq$-factorial.
Then, $(X,B_m)$ is log canonical and $\qq$-complemented over $t\in T$.
\end{lemma}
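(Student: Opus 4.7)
The plan follows the strategy of the analogous statement \cite[Lemma 3.2]{FM18}, which treats the Fano-type case; the present setting of $\qq$-complemented pairs requires only minor modifications since a $\qq$-complement is part of the hypothesis.

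The first step is to fix a $\qq$-complement $(X, B + B')$ of $(X, B)$ over $t \in T$, so that $(X, B + B')$ is log canonical and $K_X + B + B' \sim_{\qq, T} 0$ after shrinking $T$ around $t$. The goal is then to choose $m$ depending only on $\Lambda$ such that $B_m \leq B + B'$ as divisors. Once this inequality holds, the divisor $\Gamma \coloneqq B + B' - B_m$ is effective, the identification $(X, B_m + \Gamma) = (X, B + B')$ shows that $(X, B_m + \Gamma)$ is log canonical, and $K_X + B_m + \Gamma \sim_{\qq, T} 0$. Thus $\Gamma$ simultaneously witnesses that $(X, B_m)$ is log canonical and $\qq$-complemented over $T$.

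The main technical input is the global ACC theorem \cite[Theorem 1.5]{HMX14}, which I would apply to the Calabi--Yau pair $(X, B + B')$ (passing to a general fiber, or via a suitable relative formulation, to put the statement in the scope of global ACC). This forces the coefficients of $B + B'$ to lie in a finite set $\mathcal{F} \subset [0,1] \cap \qq$ depending only on $\Lambda$ and $\dim X = 3$. Choosing $m$ to be divisible by the denominators of all elements of $\mathcal{F}$, and large enough compared to the minimum gap within $\mathcal{F}$, ensures that for every component $B_i$ of $\Supp(B)$ with coefficients $b_i = \mathrm{mult}_{B_i}(B)$ and $b_i' = \mathrm{mult}_{B_i}(B')$, the truncated value $b_i^m$ satisfies $b_i^m \leq b_i + b_i'$. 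The hypothesis that $\overline{\Lambda} \subset \qq$ is essential here, since it guarantees that the values $b_i^m$ produced near accumulation points of $\Lambda$ are themselves rational and compatible with the denominator condition imposed by $\mathcal{F}$.

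The main obstacle is enforcing the coefficient inequality $b_i^m \leq b_i + b_i'$ uniformly in the pair $(X, B)$: the particular complement $B + B'$ that the hypothesis provides may not have sufficiently large coefficients along $\Supp(B)$ a priori. To overcome this, I would first replace $B'$ by a $\qq$-complement of $(X, B)$ with maximal coefficients along $\Supp(B)$. Such a replacement can be obtained by passing to a $\qq$-factorial dlt modification of $(X, B + B')$ in which log canonical places lying over components of $B$ are extracted as divisors, then pushing the modified complement back down; alternatively, one can exploit the flexibility of $\qq$-linear equivalence over a neighborhood of $t \in T$ to enlarge the relevant coefficients. With maximality ensured, the combination of global ACC with the discreteness of $\overline{\Lambda}$ below each accumulation point yields the required uniform $m$ and completes the argument.
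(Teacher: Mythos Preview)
Your application of global ACC is where the argument breaks. The statement \cite[Theorem 1.5]{HMX14} requires the coefficients of the boundary to lie in a \emph{fixed DCC set}. In your setup the coefficients of $B$ lie in $\Lambda$, but the $\qq$-complement $B'$ supplied by the hypothesis is an arbitrary effective $\qq$-divisor; its coefficients are not constrained to any DCC set. Consequently the coefficients of $B+B'$ do not lie in a set depending only on $\Lambda$, and global ACC yields no finiteness for them. Your proposed remedy of replacing $B'$ by a complement ``with maximal coefficients along $\Supp(B)$'' does not help: those maximal values are essentially log canonical thresholds of $(X,B+B'-b_i'B_i)$ with respect to $B_i$, and such thresholds form an ACC set, not a finite or DCC one, so you still cannot force $b_i^m\leq b_i+b_i'$ uniformly.

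The paper's proof avoids comparing $B_m$ with $B+B'$ altogether. It argues in two independent pieces. First, log canonicity of $(X,B_m)$ is obtained by contradiction: if it fails along a sequence, one raises the coefficients of $B_i$ one at a time until the pair becomes strictly lc, producing boundaries $\Delta_i$ with coefficients in $\overline{\Lambda}$ except at one prime, and the resulting lct's give an infinite strictly increasing sequence, contradicting ACC for log canonical thresholds. Second, $\qq$-complementedness of $(X,B_m)$ is obtained by passing to a dlt model, showing (again by contradiction, running an MMP to a Mori fiber space and invoking global ACC \cite[Theorem 1.6]{BZ16} on the general fiber with coefficients in $\overline{\Lambda}$) that $-(K_Y+B_{Y,m}+E)$ is pseudo-effective for $m\gg 0$, and then running a $-(K+B_m)$-MMP to a good minimal model where semiampleness produces a \emph{new} $\qq$-complement. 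The point is that every ACC application in the paper is set up so that all coefficients involved lie in $\overline{\Lambda}$; the given complement $B'$ is used only to guarantee that the relevant MMPs can be run as MMPs for an honest dlt pair, never as a source of coefficient control.
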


\begin{proof} We proceed in steps.

\textbf{Step 1:} In this step, we prove that for $m$ large enough the pairs $(X,B_m)$ are log canonical.
\newline
We proceed by contradiction.
Assume this is not true.
Then, there exists a sequence of pairs $(X_i,B_i)$ as in the statement, such that
$(X_i,B_{i,i})$ is not log canonical for all $i$.
Here, $B_{i,i}$ is the $i$-th truncation
of the boundary $B_i$ as in Notation~\ref{notation}.
We claim that we can find boundaries
$B_i \leq \Delta_i \leq B_{i,i}$ and
prime divisors $D_i$ such that 
\begin{equation}\label{ineqcoeff}
{\rm coeff}_{D_i}(B_i) \leq {\rm coeff}_{D_i}(\Delta_i) < {\rm coeff}_{D_i}(B_{i,i}),
\end{equation}
all the remaining coefficients of $\Delta_i$ belong to $\overline{\Lambda}$,
and 
\[
{\rm coeff}_{D_i}(\Delta_i) = \lct(X_i,B_i; D_i).
\]
In what follows, we will write 
\[
B_i = \sum_j b_i^{j} B_{i}^{j},
\]
where the $B_{i}^{j}$ are pairwise distinct
prime divisors and $b^{j}_i \in \Lambda$. 
We construct $\Delta_i$ by successively increasing the coefficients of $B_i$ 
that are different from the coefficients of $B_{i,i}$. 
Indeed, if 
\[
\lct (X_i,B_i; B_i^{1}) \geq b_{i,i}^{1}-b_i^{1},
\]
we can increase $b_i^{1}$ to  $b_{i,i}^{1}$ 
and the pair will remain log canonical.
By abusing notation, we will denote the new boundary by $B_i$.
We proceed inductively with the other coefficients.
Since $(X_i,B_{i,i})$ is not log canonical,
we eventually find $j_i$ such that 
\[
\beta_i^{j_i}=\lct (X_i,B_i ; B_i^{j_i}) < b_{i,i}^{j_i}-b_{i}^{j_i},
\]
so we may increase $b_i^{j_i}$ to $\beta_i^{j_i}$, and we obtain the desired $\Delta_i$ by setting $D_i= B_i^{j_i}$.
We denote by $\Delta'_i$ the divisor obtained from $\Delta_i$ by reducing the coefficient at the prime divisor $D_i$ to zero.
Observe that the coefficients of $\Delta'_i$ belong to the set $\overline{\Lambda}$, which satisfies the descending chain condition.
\newline
We claim that the log canonical thresholds of $(X_i, \Delta'_i)$ with respect to $D_i$
form an infinite increasing sequence. This will provide the required contradiction.
Let 
\[
c \coloneqq \limsup_i \left({\rm coeff}_{D_i}(B_{i,i}) \right).
\]
By the construction in Notation \ref{notation}, we have ${\rm coeff}_{D_i}(B_{i,i}-B_i) \leq \frac{1}{i}$.
Hence, by~\eqref{ineqcoeff}, for every $\delta>0$, we may find $i$ large enough such that
\[
{\rm coeff}_{D_i}(\Gamma_i) \in (c-\delta,c). 
\]
Thus, passing to a subsequence, we obtain an infinite increasing sequence
\[
{\rm coeff}_{D_i}(\Delta_i) = \lct( X_i,\Delta'_i ; D_i),
\]
contradicting the ascending chain condition for log canonical thresholds \cite{HMX14}.

\textbf{Step 2:} In this step, we pass to a $\qq$-factorial dlt model.
\newline
Let $(X,B)$ be a pair as in the statement.
Let $B+B'$ be a $\qq$-complement over $T$.
Write $(Y,B+B'+E)$ for a $\qq$-factorial dlt modification of $(X,B+B')$.
Here, as usual, we redefine $B$ and $B'$ to make $E$ contain all the log canonical centers of $(Y,B+B'+E)$.
In particular, for every $\epsilon$ small enough,
the pair $(Y,B+E+(1+\epsilon) B')$ is dlt and effective over $T$.
Write
\begin{equation}\label{mmp-on-negative-kx}
-\epsilon(K_Y+B_{Y,m}+E) \sim_{\qq,T}
K_Y+B_Y+\epsilon(B_Y-B_{Y,m})+(1+\epsilon)B'_Y +E.
\end{equation}
The pair on the right-hand side is dlt provided that $\epsilon$ is small enough since the support of $B_{Y,m}$ is equal to the support of $B_Y$.
Hence, we may run a minimal model program 
for $-(K_Y+B_{Y,m}+E)$.
Observe that, in order to prove that $(X,B_m)$ is $\qq$-complemented over $T$ for $m$ large enough, it suffices to prove that
$(Y,B_{Y,m}+E)$ is $\qq$-complemented over $T$ for $m$ large enough.
Indeed, the push-forward of a $\qq$-complement for
$(Y,B_{Y,m}+E)$ to $X$ will give the desired
$\qq$-complement for $(X,B_m)$.

\textbf{Step 3:} In this step, we prove that for $m$ large enough, the pair
$-(K_Y+B_{Y,m}+E)$ is pseudo-effective over $T$.
\newline
Assume this is not the case.
We can find a sequence of pairs $(Y_i,B_i+E_i)$
such that $-(K_{Y_i}+B_{i,i}+E_i)$ is not pseudo-effective over $T$.
By the $\qq$-linear equivalence of~\eqref{mmp-on-negative-kx}, 
we may run a minimal model program for $-(K_{Y_i}+B_{i,i}+E_i)$ over $T$,
which terminates in a Mori fiber space $Y'_i\rightarrow Z_i$.
Since $(Y_i,B_i+E_i)$ is $\qq$-complemented over $T$,
we deduce that the corresponding pair 
$(Y'_i,B'_i+E'_i)$ is $\qq$-complemented over $T$
and log canonical.
Observe that all the assumptions of the first step are preserved, hence we may assume that every
$(Y'_i,B'_{i,i}+E'_i)$ is log canonical,
up to passing to a sub-sequence.
Hence, perturbing the coefficients of $B_i'$ as in the first step, we can produce boundaries
$B_i'\leq \Delta_i' < B'_{i,i}$ and prime divisors $D'_i$ that are ample over $Z_i$ such that
\[
{\rm coeff}_{D'_i}(B'_i) \leq {\rm coeff}_{D'_i}(\Delta_i') < {\rm coeff}_{D'_i}(B_{i,i}'),
\]
all the remaining coefficients of $\Delta_i'$ belong to $\overline{\Lambda}$, and
\[
-(K_{X_i'}+\Delta_i'+E'_i) \equiv 0/Z_i'.
\]
Indeed, $\K X_i'. + B' \subs i,i. + E'_i$ is ample over $Z_i$, while $\K X_i'. + B' \subs i. + E'_i$ is anti-nef over $Z_i$.
Thus, as the relative Picard number is 1, we may increase the coefficients of $B'_i$ to the coefficients of $B' \subs i,i.$ one at the time, until we obtain a relatively ample pair; the prime divisor  determining this change will be denoted by $D_i'$.
Passing to a subsequence, we may assume that ${\rm coeff}_{D_i'}(\Delta_i')$ forms an infinite increasing sequence, so the coefficients of the divisors $\Delta_i'$ belong to an infinite set satisfying the descending chain condition. By restricting to a general fiber of $Y'_i \rightarrow Z_i$, we get a contradiction by the global ascending chain condition for generalized pairs (see ~\cite{HMX14}*{Theorem 1.6}).

\textbf{Step 4:} In this step, we prove that for $m$ large enough, the pair $(Y,B_{Y,m}+E)$ is $\qq$-complemented.
\newline
By the last claim, we may assume that 
$-(K_Y+B_{Y,m}+E)$ is pseudo-effective over $T$.
By the $\qq$-linear equivalence~\eqref{mmp-on-negative-kx},
we find a good minimal model 
$Y'$ for $-(K_Y+B_{Y,m}+E)$.
Observe that all the hypotheses of the first step are preserved. Hence, we may assume that 
$-(K_{Y'}+B_{Y',m}+E')$ is semi-ample over $T$ and 
log canonical.
Hence, the pair $(Y',B_{Y',m}+E')$ has a $\qq$-complement over $t\in T$.
Pulling back this $\qq$-complement to $Y$, we obtain a $\qq$-complement for $(Y,B_{Y,m}+E)$,
which pushes forward to a $\qq$-complement for $(X,B_m)$,
proving the lemma.
\end{proof}

\begin{proof}[Proof of Theorem~\ref{main-theorem}]
By Lemma~\ref{coefficients-perturbation}, 
it suffices to prove the case of a finite set of coefficients.
Indeed, if $(X,B)$ is as in the statement of the theorem, we may find $m$, only depending on $\Lambda$, such that $(X,B_m)$ is log canonical, $B\leq B_m$ and $(X,B_m)$ is $\qq$-complemented over $T$.
In particular, by monotonicity, it suffices to produce an $n$-complement for $(X,B_m)$, where $n$ only depends on $m$. 
By Theorem~\ref{main-theorem-hyper}, we may find $n$, only depending on $m$, such that
\[
\Gamma \sim -n(K_X+B_m),
\]
and $(X,B_m+\Gamma/n)$ is log canonical.
Since $n$ only depends on $m$ and
$m$ only depends on $\Lambda$,
then $n$ only depends on $\Lambda$.
This proves the main theorem.
\end{proof}

\begin{proof}[Proof of Corollary~\ref{introcor}]
By Theorem~\ref{main-theorem}, there is an $n$-complement
$(X,B+\Gamma/n)$ for $(X,B)$ over $T$, where
$n$ only depends on $\Lambda$.
However, since $(X,B)$ is $\qq$-trivial over $T$,
we conclude that $\Gamma/n$ is an effective divisor that is $\qq$-trivial over $T$.
In particular, it either contains the fiber over $t\in T$,
or its image on $T$ is disjoint from $t$.
In the former case, the log pair $(X,B+\Gamma/n)$ would not be log canonical over $t\in T$, because
$(X,B)$ already contains a log canonical center in the fiber over $t$.
Thus, we conclude that the image of $\Gamma$ on $T$ is disjoint from $t$.
Otherwise, the pair $(X,B+\Gamma/n)$ would not be log canonical as $x$ is alreadya  log canonical cneter of $(x\in X,B)$.
Shrinking around $t$, we may assume that $\Gamma=0$ and
then $n(K_X+B)\sim_{T} 0$ as claimed.
\end{proof}

\begin{proof}[Proof of Corollary~\ref{introcor1}]
  We will follow the notation as in \cite{Fuj01}.
  Notice that, in \cite{Fuj01}*{Theorem 0.2}, the conclusions of the statement are proved in arbitrary dimension $n$ conditionally to two conjectures, denoted by $(F'_i)$ and $(F_j)$ (see \cite{Fuj01}*{Conjecture 3.2}). 
  More precisely, if $(F'_3)$ holds, and $(F_l)$ holds for $l \leq 2$, then the statement follows.
  Since $(F_2)$ is proved in ~\cite{Fuj01}*{Proposition 3.6},  it suffices to prove $(F_3')$.
  For the reader's convenience, we recall its statement.
  
  \begin{itemize}
      \item[($F_3'$):] There exists $m$ such that order of $\rho(g)$ in $\mathrm{GL}(H^0(S,K_S))$ is bounded above by $m$ for every $3$-dimensional variety $S$ with only canonical singularities such that $K_S \sim 0$ and for every $g \in \mathrm{Aut}(S, 0) = \mathrm{Aut}(S)$ such that $g$ has a finite order.
  Here, $\rho \colon \mathrm{Aut}(S)\rightarrow \mathrm{GL}(H^0(S,K_S))$ is the standard canonical representation map.
  \end{itemize}
  Let $S$ and $g$ be as in the statement of $(F'_3)$. 
  Firstly, we remark that $S$ projective, since in the proof of \cite{Fuj01}*{Theorem 0.2} $S$ is taken to be a component of the exceptional divisor above the isolated log canonical singularity.
  More precisely, in the proof of \cite{Fuj01}*{Theorem 0.2}, $(F'_{n-1})$ is needed in \cite{Fuj01}*{Proposition 4.18, Proposition 4.19}.
  In \cite{Fuj01}*{Proposition 4.18, Proposition 4.19}, it is assumed $\mu=n-1$, where $\mu$ is defined in \cite{Fuj01}*{Definition 4.12}.
  By the definition of $\mu$, this exactly means that we are focusing on an exceptional divisor over an isolated singularity.
  \newline
  Then, let $X \coloneqq S/\langle g \rangle$ denote the quotient of $S$ by the group generated by $g$.
  Let $f \colon S \rar X$ denote the quotient morphism.
  Then, by the Riemann--Hurwitz formula, there exists a boundary $B$ with coefficients in $\Phi(\lbrace 0,1 \rbrace)$ such that $K_S = f^*(\K X. + B)$.
  Since $\K S. \sim 0$, it follows that $\K X. + B \sim_\qq 0$.
  Furthermore, by \cite{KM98}*{Proposition 5.20}, $(X,B)$ is klt.
  Then, we are done, since the order of $\rho(g)$ is bounded above by the index of $\K X. + B$.
  Indeed, by \cite{thesis}*{Theorem 1.8.7}, $m (\K X. + B)\sim 0$ for some $m \in \mathbb N$ independent of $S$. If $m(K_X+B)\sim 0$, then $g|_X =id_X$ acts trivially on $H^0(X,m(K_X+B))$.
  Since the sections of $H^0(X,m(K_X+B))$ and $H^0(S,m\K S.)$ are identified to each other by pull-back, $g$ acts trivially on $H^0(S,mK_S)$.
  Therefore, $\rho(g)^m=1\in \mathrm{GL}(H^0(S,K_S))$, as $K_S\sim 0$.
  This concludes the proof.
\end{proof}

\begin{bibdiv}
\begin{biblist}

\bib{Ale93}{article}{
   author={Alexeev, Valery},
   title={Two two-dimensional terminations},
   journal={Duke Math. J.},
   volume={69},
   date={1993},
   number={3},
   pages={527--545},
   issn={0012-7094},
   review={\MR{1208810}},
   doi={10.1215/S0012-7094-93-06922-0},
}

\bib{Amb99}{book}{
   author={Ambro, Florin},
   title={The adjunction conjecture and its applications},
   note={Thesis (Ph.D.)--The Johns Hopkins University},
   publisher={ProQuest LLC, Ann Arbor, MI},
   date={1999},
   pages={54},
   isbn={978-0599-26788-6},
   review={\MR{2698988}},
}

\bib{BCHM}{article}{
   author={Birkar, Caucher},
   author={Cascini, Paolo},
   author={Hacon, Christopher D.},
   author={McKernan, James},
   title={Existence of minimal models for varieties of log general type},
   journal={J. Amer. Math. Soc.},
   volume={23},
   date={2010},
   number={2},
   pages={405--468},
   issn={0894-0347},
   review={\MR{2601039}},
}

\bib{BCL14}{article}{
   author={Boucksom, S\'{e}bastien},
   author={Cacciola, Salvatore},
   author={Lopez, Angelo Felice},
   title={Augmented base loci and restricted volumes on normal varieties},
   journal={Math. Z.},
   volume={278},
   date={2014},
   number={3-4},
   pages={979--985},
   issn={0025-5874},
   review={\MR{3278900}},
   doi={10.1007/s00209-014-1341-3},
}

\bib{Bir16a}{article}{
   author={Birkar, Caucher},
   title={Anti-pluricanonical systems on Fano varieties},
   journal={Ann. of Math. (2)},
   volume={190},
   date={2019},
   number={2},
   pages={345--463},
   issn={0003-486X},
   review={\MR{3997127}},
   doi={10.4007/annals.2019.190.2.1},
}

\bib{Bir16b}{article}{
   author={Birkar, Caucher},
   title={Singularities of linear systems and boundedness of Fano varieties},
   journal={Ann. of Math. (2)},
   volume={193},
   date={2021},
   number={2},
   pages={347--405},
   issn={0003-486X},
   review={\MR{4224714}},
   doi={10.4007/annals.2021.193.2.1},
}

\bib{BZ16}{article}{
   author={Birkar, Caucher},
   author={Zhang, De-Qi},
   title={Effectivity of Iitaka fibrations and pluricanonical systems of
   polarized pairs},
   journal={Publ. Math. Inst. Hautes \'Etudes Sci.},
   volume={123},
   date={2016},
   pages={283--331},
   issn={0073-8301},
   review={\MR{3502099}},
}

\bib{FG14}{article}{
   author={Fujino, Osamu},
   author={Gongyo, Yoshinori},
   title={On the moduli b-divisors of lc-trivial fibrations},
   language={English, with English and French summaries},
   journal={Ann. Inst. Fourier (Grenoble)},
   volume={64},
   date={2014},
   number={4},
   pages={1721--1735},
   issn={0373-0956},
   review={\MR{3329677}},
}

\bib{FG14b}{article}{
   author={Fujino, Osamu},
   author={Gongyo, Yoshinori},
   title={Log pluricanonical representations and the abundance conjecture},
   journal={Compos. Math.},
   volume={150},
   date={2014},
   number={4},
   pages={593--620},
   issn={0010-437X},
   review={\MR{3200670}},
   doi={10.1112/S0010437X13007495},
}

\bib{Fil18}{article}{
   author={Filipazzi, Stefano},
   title={On a generalized canonical bundle formula and generalized adjunction},
   journal={to appear in Ann. Sc. Norm. Super. Pisa Cl. Sci. (5)},
   date={2020},
   doi={10.2422/2036-2145.201810\_001},
}

\bib{Flo14}{article}{
   author={Floris, Enrica},
   title={Inductive approach to effective b-semiampleness},
   journal={Int. Math. Res. Not. IMRN},
   date={2014},
   number={6},
   pages={1465--1492},
   issn={1073-7928},
   review={\MR{3180598}},
   doi={10.1093/imrn/rns260},
}

\bib{FM18}{article}{
   author={Filipazzi, Stefano},
   author={Moraga, Joaqu\'{\i}n},
   title={Strong $(\delta,n)$-complements for semi-stable morphisms},
   journal={Doc. Math.},
   volume={25},
   date={2020},
   pages={1953--1996},
   issn={1431-0635},
   review={\MR{4187715}},
}

\bib{FM00}{article}{
   author={Fujino, Osamu},
   author={Mori, Shigefumi},
   title={A canonical bundle formula},
   journal={J. Differential Geom.},
   volume={56},
   date={2000},
   number={1},
   pages={167--188},
   issn={0022-040X},
   review={\MR{1863025}},
}

\bib{FS20}{misc}{
   author={Filipazzi, Stefano},
   author={Svaldi, Roberto},
   title={On the connectedness principle and dual complexes for generalized pairs},
   year={2020},
   note={https://arxiv.org/abs/2010.08018},
}

\bib{Fuj00}{article}{
   author={Fujino, Osamu},
   title={Abundance theorem for semi log canonical threefolds},
   journal={Duke Math. J.},
   volume={102},
   date={2000},
   number={3},
   pages={513--532},
   issn={0012-7094},
   review={\MR{1756108}},
   doi={10.1215/S0012-7094-00-10237-2},
}

\bib{Fuj01}{article}{
   author={Fujino, Osamu},
   title={The indices of log canonical singularities},
   journal={Amer. J. Math.},
   volume={123},
   date={2001},
   number={2},
   pages={229--253},
   issn={0002-9327},
   review={\MR{1828222}},
}

\bib{Fuj17}{article}{
   author={Fujino, Osamu},
   title={On semipositivity, injectivity and vanishing theorems},
   conference={
      title={Hodge theory and $L^2$-analysis},
   },
   book={
      series={Adv. Lect. Math. (ALM)},
      volume={39},
      publisher={Int. Press, Somerville, MA},
   },
   date={2017},
   pages={245--282},
   review={\MR{3751293}},
}

\bib{HMX14}{article}{
   author={Hacon, Christopher D.},
   author={McKernan, James},
   author={Xu, Chenyang},
   title={ACC for log canonical thresholds},
   journal={Ann. of Math. (2)},
   volume={180},
   date={2014},
   number={2},
   pages={523--571},
   issn={0003-486X},
   review={\MR{3224718}},
   doi={10.4007/annals.2014.180.2.3},
}

\bib{HX13}{article}{
   author={Hacon, Christopher D.},
   author={Xu, Chenyang},
   title={Existence of log canonical closures},
   journal={Invent. Math.},
   volume={192},
   date={2013},
   number={1},
   pages={161--195},
   issn={0020-9910},
   review={\MR{3032329}},
   doi={10.1007/s00222-012-0409-0},
}

\bib{HX16}{article}{
   author={Hacon, Christopher D.},
   author={Xu, Chenyang},
   title={On finiteness of B-representations and semi-log canonical
   abundance},
   conference={
      title={Minimal models and extremal rays},
      address={Kyoto},
      date={2011},
   },
   book={
      series={Adv. Stud. Pure Math.},
      volume={70},
      publisher={Math. Soc. Japan, [Tokyo]},
   },
   date={2016},
   pages={361--377},
   review={\MR{3618266}},
   doi={10.2969/aspm/07010361},
}

\bib{ast}{collection}{
   title={Flips and abundance for algebraic threefolds},
   author={Koll\'{a}r et al., J\'{a}nos},
   note={Papers from the Second Summer Seminar on Algebraic Geometry held at
   the University of Utah, Salt Lake City, Utah, August 1991;
   Ast\'{e}risque No. 211 (1992) (1992)},
   publisher={Soci\'{e}t\'{e} Math\'{e}matique de France, Paris},
   date={1992},
   pages={1--258},
   issn={0303-1179},
   review={\MR{1225842}},
}

\bib{KK10}{article}{
   author={Koll\'{a}r, J\'{a}nos},
   author={Kov\'{a}cs, S\'{a}ndor J.},
   title={Log canonical singularities are Du Bois},
   journal={J. Amer. Math. Soc.},
   volume={23},
   date={2010},
   number={3},
   pages={791--813},
   issn={0894-0347},
   review={\MR{2629988}},
   doi={10.1090/S0894-0347-10-00663-6},
}

\bib{KMM94}{article}{
   author={Keel, Sean},
   author={Matsuki, Kenji},
   author={McKernan, James},
   title={Log abundance theorem for threefolds},
   journal={Duke Math. J.},
   volume={75},
   date={1994},
   number={1},
   pages={99--119},
   issn={0012-7094},
   review={\MR{1284817}},
   doi={10.1215/S0012-7094-94-07504-2},
}

\bib{KM98}{book}{
   author={Koll\'{a}r, J\'{a}nos},
   author={Mori, Shigefumi},
   title={Birational geometry of algebraic varieties},
   series={Cambridge Tracts in Mathematics},
   volume={134},
   note={With the collaboration of C. H. Clemens and A. Corti;
   Translated from the 1998 Japanese original},
   publisher={Cambridge University Press, Cambridge},
   date={1998},
   pages={viii+254},
   isbn={0-521-63277-3},
   review={\MR{1658959}},
   doi={10.1017/CBO9780511662560},
}

\bib{Kol13}{book}{
   author={Koll\'{a}r, J\'{a}nos},
   title={Singularities of the minimal model program},
   series={Cambridge Tracts in Mathematics},
   volume={200},
   note={With a collaboration of S\'{a}ndor Kov\'{a}cs},
   publisher={Cambridge University Press, Cambridge},
   date={2013},
   pages={x+370},
   isbn={978-1-107-03534-8},
   review={\MR{3057950}},
   doi={10.1017/CBO9781139547895},
}

\bib{kol21}{article}{
   author={Koll\'{a}r, J\'{a}nos},
   title={Relative mmp without $ \mathbb{Q} $-factoriality},
   journal={Electron. Res. Arch.},
   volume={29},
   date={2021},
   number={5},
   pages={3193--},
   review={\MR{4342251}},
   doi={10.3934/era.2021033},
}

\bib{Lai11}{article}{
   author={Lai, Ching-Jui},
   title={Varieties fibered by good minimal models},
   journal={Math. Ann.},
   volume={350},
   date={2011},
   number={3},
   pages={533--547},
   issn={0025-5831},
   review={\MR{2805635}},
   doi={10.1007/s00208-010-0574-7},
}



\bib{Pro00}{article}{
   author={Prokhorov, Yuri G.},
   title={Boundedness of nonbirational extremal contractions},
   journal={Internat. J. Math.},
   volume={11},
   date={2000},
   number={3},
   pages={393--411},
   issn={0129-167X},
   review={\MR{1769614}},
   doi={10.1142/S0129167X00000207},
}

\bib{Pro99}{book}{
   author={Prokhorov, Yuri G.},
   title={Lectures on complements on log surfaces},
   series={MSJ Memoirs},
   volume={10},
   publisher={Mathematical Society of Japan, Tokyo},
   date={2001},
   pages={viii+130},
   isbn={4-931469-12-4},
   review={\MR{1830440}},
}

\bib{Pro01a}{article}{
   author={Prokhorov, Yuri G.},
   title={Complements on conic fibrations. I},
   note={Algebraic geometry, 11},
   journal={J. Math. Sci. (New York)},
   volume={106},
   date={2001},
   number={5},
   pages={3353--3364},
   issn={1072-3374},
   review={\MR{1878055}},
   doi={10.1023/A:1017915728271},
}

\bib{Pro01b}{article}{
   author={Prokhorov, Yuri G.},
   title={On the Mori classification of conic bundles},
   note={Algebraic geometry, 11},
   journal={J. Math. Sci. (New York)},
   volume={106},
   date={2001},
   number={5},
   pages={3340--3352},
   issn={1072-3374},
   review={\MR{1878054}},
   doi={10.1023/A:1017963611432},
}

\bib{PS01}{article}{
   author={Prokhorov, Yuri G.},
   author={Shokurov, Vyacheslav V.},
   title={The first fundamental theorem on complements: from global to
   local},
   language={Russian, with Russian summary},
   journal={Izv. Ross. Akad. Nauk Ser. Mat.},
   volume={65},
   date={2001},
   number={6},
   pages={99--128},
   issn={1607-0046},
   translation={
      journal={Izv. Math.},
      volume={65},
      date={2001},
      number={6},
      pages={1169--1196},
      issn={1064-5632},
   },
   review={\MR{1892905}},
   doi={10.1070/IM2001v065n06ABEH000366},
}

\bib{PS09}{article}{
   author={Prokhorov, Yuri G.},
   author={Shokurov, Vyacheslav V.},
   title={Towards the second main theorem on complements},
   journal={J. Algebraic Geom.},
   volume={18},
   date={2009},
   number={1},
   pages={151--199},
   issn={1056-3911},
   review={\MR{2448282}},
   doi={10.1090/S1056-3911-08-00498-0},
}

\bib{Sho79}{article}{
   author={Shokurov, Vyacheslav V.},
   title={Smoothness of a general anticanonical divisor on a Fano variety},
   language={Russian},
   journal={Izv. Akad. Nauk SSSR Ser. Mat.},
   volume={43},
   date={1979},
   number={2},
   pages={430--441},
   issn={0373-2436},
   review={\MR{534602}},
}
	
\bib{Sho91}{misc}{
   author={Shokurov, Vyacheslav V.},
   title={ACC in codimension 2},
   year={1991},
   note={preprint},
}

\bib{Sho96}{article}{
   author={Shokurov, Vyacheslav V.},
   title={$3$-fold log models},
   note={Algebraic geometry, 4},
   journal={J. Math. Sci.},
   volume={81},
   date={1996},
   number={3},
   pages={2667--2699},
   issn={1072-3374},
   review={\MR{1420223}},
   doi={10.1007/BF02362335},
}

\bib{Sho97}{article}{
   author={Shokurov, Vyacheslav V.},
   title={Complements on surfaces},
   note={Algebraic geometry, 10},
   journal={J. Math. Sci. (New York)},
   volume={102},
   date={2000},
   number={2},
   pages={3876--3932},
   issn={1072-3374},
   review={\MR{1794169}},
   doi={10.1007/BF02984106},
}

\bib{Ver76}{article}{
   author={Verdier, Jean-Louis},
   title={Stratifications de Whitney et th\'eor\`eme de Bertini-Sard},
   language={French},
   journal={Invent. Math.},
   volume={36},
   date={1976},
   pages={295--312},
   issn={0020-9910},
   review={\MR{0481096}},
   doi={10.1007/BF01390015},
}

\bib{Xu19a}{misc}{
  author ={Xu, Yanning},
  title={Complements on log canonical Fano varieties},
  year = {2019},
  note = {https://arxiv.org/abs/1901.03891},
}

\bib{Xu19b}{misc}{
  author = {Xu, Yanning},
  title={Some Results about the Index Conjecture for log Calabi--Yau Pairs},
  year = {2019},
  note = {https://arxiv.org/abs/1905.00297},
}

\bib{thesis}{book}{
   author={Xu, Yanning},
   title={Complements on Log Canonical Fano Varieties and Index Conjecture of Log Calabi-Yau Varieties},
   note={Thesis (Ph.D.)--University of Cambridge},
   date={2020},
   doi={10.17863/CAM.58891},
}

\end{biblist}
\end{bibdiv}

\end{document}